\newtheorem{thm}{Theorem}[section]
\newtheorem{prop}[thm]{Proposition}
\newtheorem{cor}[thm]{Corollary}
\newtheorem{lem}[thm]{Lemma}
\theoremstyle{definition}
\newtheorem{rem}[thm]{Remark}
\newtheorem{defn}[thm]{Definition}
\newtheorem{example}[thm]{Example}
\def\vint{\mathop{\mathchoice%
          {\setbox0\hbox{$\displaystyle\intop$}\kern 0.22\wd0%
           \vcenter{\hrule width 0.6\wd0}\kern -0.82\wd0}%
          {\setbox0\hbox{$\textstyle\intop$}\kern 0.2\wd0%
           \vcenter{\hrule width 0.6\wd0}\kern -0.8\wd0}%
          {\setbox0\hbox{$\scriptstyle\intop$}\kern 0.2\wd0%
           \vcenter{\hrule width 0.6\wd0}\kern -0.8\wd0}%
          {\setbox0\hbox{$\scriptscriptstyle\intop$}\kern 0.2\wd0%
           \vcenter{\hrule width 0.6\wd0}\kern -0.8\wd0}}%
          \mathopen{}\int}
\DeclareMathOperator{\diam}{\it d}
\DeclareMathOperator{\dima}{dim_A}
\DeclareMathOperator{\udima}{\overline{dim}_A}
\DeclareMathOperator{\ldima}{\underline{dim}_A}
\DeclareMathOperator{\udimm}{\overline{dim}_M}
\DeclareMathOperator{\ldimm}{\underline{dim}_M}
\DeclareMathOperator{\dimh}{dim_H}
      \newcommand{\N}{{\mathbb N}}
\newcommand{\Z}{{\mathbb Z}}      \newcommand{\R}{{\mathbb R}}
\def\dist{\qopname\relax o{dist}}
\newcommand{\Ha}{{\mathcal H}}
\newcommand{\Mi}{{\mathcal M}}
\newcommand{\W}{{\mathcal W}}
\newcommand{\eps}{\varepsilon}
\title[Hardy--Sobolev inequalities]{In between the inequalities of Sobolev and Hardy}
\author[J.\! Lehrb\"ack]{Juha Lehrb\"ack}   
\address[J.L.]{University of Jyvaskyla, Department of Mathematics and Statistics, P.O. Box 35, FI-40014 University of Jyvaskyla, Finland}
\email{juha.lehrback@jyu.fi}
\author[A.V.\! V\"ah\"akangas]{Antti V. V\"ah\"akangas}
\address[A.V.V.]{University of Jyvaskyla, Department of Mathematics and Statistics, P.O. Box 35, FI-40014 University of Jyvaskyla, Finland\\  and 
University of Helsinki, Department of Mathematics and Statistics,
P.O. Box 68, FI-00014 University of Helsinki, Finland}
\email{antti.vahakangas@iki.fi}
\begin{document}

\keywords{Sobolev inequality, Hardy inequality, Assouad dimension}
\subjclass[2010]{35A23 (26D15, 46E35)}

\begin{abstract}
We establish both sufficient and necessary conditions for 
the validity of the so-called
Hardy--Sobolev inequalities on open sets of the Euclidean space. These inequalities 
form a natural interpolating scale between the (weighted) Sobolev inequalities
and the (weighted) Hardy inequalities. The Assouad dimension
of the complement of the open set  turns out to play an important role in both 
sufficient and necessary conditions.
\end{abstract}
\maketitle

\section{Introduction}

The \emph{Sobolev inequality} is a fundamental tool in all analysis related to
partial differential equations
and variational problems, see e.g.~\cite{MR2597943,Mazya1985}.
When $G\subset\R^n$ is an open set
and $1\le p < n$, this inequality states that 
\begin{equation}\label{eq.sobo}
\biggl(\int_{G} \lvert f\rvert^{np/(n-p)} \,dx\biggr)^{(n-p)/np}
\le  C \biggl(\int_G \lvert \nabla f\rvert^p\,dx\biggr)^{1/p}
\qquad\text{ for all } f\in C_0^\infty(G)\,,
\end{equation}
where the constant $C>0$ depends only on $n$ and $p$.
If $G$ is bounded (or of finite measure)
and $1\le q \le np/(n-p):=p^*$, a simple use of H\"older's
inequality yields a corresponding inequality where on the left-hand side 
of \eqref{eq.sobo}  
the $p^*$-norm is replaced by the $q$-norm; the constant in the inequality
then depends on the measure of $G$ as well.
In particular, for $q=p$ this gives the so-called
\emph{Friedrichs' inequality}
\begin{equation*}\label{eq.friedrich}
\int_{G} \lvert f\rvert^{p} \,dx
\le  C \int_G \lvert \nabla f\rvert^p\,dx
\qquad\text{ for all } f\in C_0^\infty(G)\,.
\end{equation*}
However, if $p>1$ and the open set
$G$ satisfies some additional properties, e.g.\ 
$G$ is a Lipschitz domain or more generally 
the complement of $G$ is uniformly
$p$-fat, then Friedrichs' inequality can
be improved into
a \emph{$p$-Hardy inequality}
\begin{equation}\label{eq.p-hardy}
\int_{G} \lvert f\rvert^{p} \delta_{\partial G}^{-p}\,dx
\le  C \int_G \lvert \nabla f\rvert^p\,dx
\qquad\text{ for all } f\in C_0^\infty(G),
\end{equation}
where $\delta_{\partial G}(x)=\dist(x,\partial G)$ denotes the distance from
$x\in G$ to the boundary of $G$; see e.g.\ Lewis~\cite{Lewis1988}
and Wannebo~\cite{MR1010807}.
Unlike Friedrichs' inequality, this $p$-Hardy inequality can be valid
even if the open set $G$ has infinite measure.
A weighted $(p,\beta)$-Hardy inequality is obtained from inequality~\eqref{eq.p-hardy}
by replacing $dx$ with $\delta_{\partial G}^\beta\,dx$, $\beta\in\R$, on both sides
of~\eqref{eq.p-hardy}. Such an inequality holds, for instance, in a Lipschitz domain $G$
for $1< p < \infty$ if (and only if) and $\beta < p-1$, 
as was shown by Ne\v cas~\cite{MR0163054}.
On the other hand, if, roughly speaking, $\partial G$ contains an isolated
part of dimension $n-p+\beta$, then the $(p,\beta)$-Hardy inequality can 
not be valid in $G\subset\R^n$; we refer to~\cite{MR1948106,MR2442898}.

In this  paper,  we are interested in certain 
inequalities forming a natural 
interpolating scale in between the (weighted) Sobolev inequalities
and  the (weighted) Hardy inequalities. More precisely,
we say that an open set $G\subsetneq\R^n$
{\em admits a $(q,p,\beta)$-Hardy--Sobolev inequality}
if there is a constant $C>0$ such that
the inequality 
\begin{equation}\label{e.hardy-sobo}
\biggl(\int_{G} \lvert f\rvert^q \,\delta_{\partial G}^{(q/p)(n - p + \beta) - n}\,dx\biggr)^{1/q}
\le  C \biggl(\int_G \lvert \nabla f\rvert^p\,\delta_{\partial G}^{\beta} \,dx\biggr)^{1/p}
\end{equation}
holds for all $f\in C^\infty_0(G)$. 
Notice how the Sobolev inequality~\eqref{eq.sobo} is obtained as the case $q=p^*=np/(n-p)$, $\beta=0$ 
in~\eqref{e.hardy-sobo}; and 
the weighted $(p,\beta)$-Hardy inequality is exactly the case $q=p$ in~\eqref{e.hardy-sobo}.

We  begin
in Section~\ref{s.interpolate} by showing that if 
an open set $G\subset\R^n$ admits a
$(p,\beta)$-Hardy inequality, then also the $(q,p,\beta)$-Hardy--Sobolev inequality
holds for all $p\le q\le p^*$, see Theorem~\ref{thm.H to HS}. 
Thus, for these $q$, sufficient conditions for Hardy inequalities always 
yield sufficient conditions for Hardy--Sobolev inequalities, 
see Corollary~\ref{coro.hardy_ass}.
We recall that there are in principle two separate classes of 
open sets 
in which the 
$(p,\beta)$-Hardy inequality can hold: either the complement
 $G^c=\R^n\setminus G$ is
`thick', as in the case of Lipschitz domains or uniformly fat complements,
or then the complement is `thin', corresponding to an upper bound on the 
Assouad dimension $\dima(G^c)$. 
(We refer to Section~\ref{s.dim} for definitions and preliminary results
related to various notions of dimension and to Section~\ref{s.main} for a
more precise formulation of this `dichotomy'
between thick and thin cases.) 
It turns out that actually
in the thick case 
the sufficient conditions emerging from Hardy inequalities
are rather sharp for Hardy--Sobolev inequalities as well; 
see the discussion after Corollary~\ref{coro.hardy_ass} and Theorem~\ref{t.dichotomy}.

On the other hand, the $(p^*,p,0)$-Hardy--Sobolev inequality (that is, 
the Sobolev inequality) holds without any extra assumptions on $G$, and 
this is not the case with the $p$-Hardy inequality.
Hence it is 
natural to expect that
for exponents $p<q<p^*$ 
one could at least in some cases relax the assumptions required for Hardy inequalities
and still obtain the Hardy--Sobolev inequalities. 
We show that this is indeed possible in the case when the complement 
$G^c$ is thin.
More precisely, in~\cite{lehrbackHardyAssouad} it was shown that if
$\dima(G^c)<n-p+\beta$ and $\beta<p-1$, 
then $G$ admits a $(p,\beta)$-Hardy inequality.
Now, in Theorem~\ref{t.hs.thin} 
we show that for the  $(q,p,\beta)$-Hardy--Sobolev inequality,
with $p\le q\le p^*$, it is 
actually  sufficient that
$\dima(G^c)<\min\bigl\{\frac q p (n-p+\beta) , n-1\bigr\}$.
For $q=p$,
this result gives an improvement
for the sufficient condition for the $(p,\beta)$-Hardy inequality as well,
see Remark~\ref{r.impr.ha} for details.

The proof of Theorem~\ref{t.hs.thin} relies heavily on the
work of Horiuchi~\cite{MR1021144}, where the main interest
was in the existence of embeddings between weighted Sobolev spaces. 
Our main contribution to this part is the observation that
the so-called $P(s)$-property (see Definition~\ref{d.horiuchi}) that Horiuchi 
is using as a sufficient condition 
can actually be characterized using the Assouad dimension; this is done in
our Theorem~\ref{t.hori_and_ass}.
It is worth a mention that this adds one more item to the already long list
of notions equivalent to the Assouad dimension, see, for 
instance~\cite{KLV,MR3055588,MR1608518}, and also gives a wealth of 
new examples where Horiuchi's original results can be applied.  
In Section~\ref{s.horiuchi} we present  
Horiuchi's proof, adapted to our
setting, for the sake of clarity and completeness.

It should be noted that in the above results involving a 
`thin' complement  
the test functions do not have to vanish
near $\partial G$, but the inequalities actually hold for all functions in 
$C_0^\infty(\R^n)$. We call such inequalities \emph{global}
Hardy--Sobolev inequalities. In Section~\ref{s.application} 
we establish necessary conditions for these global inequalities,
which  in particular 
yield, together with the sufficient condition from Section~\ref{s.horiuchi},
the following characterization 
in the unweighted case $\beta=0$.

\begin{thm}\label{t.char}
Let $E\not=\emptyset$ be a closed 
 set in $\R^n$ and let $1\le p\le q< np/(n-p)<\infty$. 
Then there is a constant $C>0$ such that the global
$(q,p,0)$-Hardy--Sobolev inequality 
\begin{equation*}\label{e.weighted*}
\biggl(\int_{\R^n} \lvert f\rvert^q \, \delta_E^{(q/p)(n-p)-n}\,dx\biggr)^{1/q}
\le C\biggl(\int_{\R^n} \lvert \nabla f\rvert^p\,dx\biggr)^{1/p}
\end{equation*}
 holds for every $f\in C^\infty_0(\R^n)$
 if and only if
 $\dima(E)<\frac q p (n-p)\,.$
\end{thm}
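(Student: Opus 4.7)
The plan is to handle the two directions separately, using Theorem~\ref{t.hs.thin} as the sufficient ingredient from Section~\ref{s.horiuchi} together with a test-function argument for necessity.

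\emph{Necessity.} Suppose the global $(q,p,0)$-Hardy--Sobolev inequality holds. For each $x_0\in E$ and each $R>0$, test it with a standard smooth cutoff $\phi_{x_0,R}\in C_0^\infty(\R^n)$ satisfying $\phi_{x_0,R}\equiv 1$ on $B(x_0,R)$, $\operatorname{supp}\phi_{x_0,R}\subset B(x_0,2R)$, and $\lvert\nabla\phi_{x_0,R}\rvert\le c/R$. Since $\lVert\nabla\phi_{x_0,R}\rVert_{L^p}\le CR^{n/p-1}$, the inequality forces the uniform bound
\[
\int_{B(x_0,R)}\delta_E^{(q/p)(n-p)-n}\,dx\;\le\;CR^{q(n-p)/p}\;=\;CR^{n-\beta}
\qquad (x_0\in E,\;R>0),
\]
where $\beta:=n-(q/p)(n-p)>0$. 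Such a uniform bound on $\int_{B(x_0,R)}\delta_E^{-\beta}\,dx$ is a well-known characterization of the Assouad-dimension condition $\dima(E)<n-\beta=\tfrac{q}{p}(n-p)$, which I expect to import from the preliminaries in Section~\ref{s.dim}.

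\emph{Sufficiency.} Assume $\dima(E)<\tfrac{q}{p}(n-p)$. The idea is to interpolate Theorem~\ref{t.hs.thin} with the classical Sobolev inequality (which is the $(p^*,p,0)$-Hardy--Sobolev inequality and holds unconditionally). Pick an intermediate exponent $q_0\in[p,q]$ with $p\dima(E)/(n-p)<q_0\le q$, and, provided $\dima(E)<n-1$, arrange also $(q_0/p)(n-p)\le n-1$. Then Theorem~\ref{t.hs.thin} applied with $G=\R^n\setminus E$ and $\beta=0$ (recalling that the thin-complement inequalities are automatically global, as noted in the introduction) delivers the $(q_0,p,0)$-Hardy--Sobolev inequality. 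Writing $q=\lambda q_0+(1-\lambda)p^*$ for $\lambda\in[0,1]$, a short check gives
\[
(q/p)(n-p)-n \;=\; \lambda\bigl[(q_0/p)(n-p)-n\bigr] + (1-\lambda)\cdot 0,
\]
so H\"older's inequality yields
\[
\int_{\R^n}\lvert f\rvert^q\,\delta_E^{(q/p)(n-p)-n}\,dx
\;\le\; \biggl(\int \lvert f\rvert^{q_0}\delta_E^{(q_0/p)(n-p)-n}\,dx\biggr)^{\!\lambda}\biggl(\int \lvert f\rvert^{p^*}\,dx\biggr)^{\!1-\lambda}.
\]
Bounding the two factors on the right by the $(q_0,p,0)$-Hardy--Sobolev inequality and Sobolev's inequality, respectively, and using $\lambda q_0+(1-\lambda)p^*=q$, gives the desired estimate.

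\emph{Main obstacle.} This interpolation needs an intermediate exponent $q_0$ that triggers Theorem~\ref{t.hs.thin}, and the latter carries the auxiliary restriction $\dima(E)<n-1$. The hardest case is therefore $\dima(E)\in[n-1,\tfrac{q}{p}(n-p))$, which is non-empty precisely when $q$ is sufficiently close to $p^*$; here the interpolation strategy above fails and a more delicate argument is required. One plan is to revisit the proof in Section~\ref{s.horiuchi} and work directly with the $P(s)$-property from Definition~\ref{d.horiuchi}, using its equivalence with the Assouad dimension via Theorem~\ref{t.hori_and_ass} to bypass the step that is responsible for the $n-1$ barrier in the weighted setting.
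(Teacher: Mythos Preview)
Your necessity argument is correct and is exactly the paper's proof of Theorem~\ref{t.res} specialized to $\beta=0$: the test-function computation shows $(q/p)(n-p)\in\mathcal{A}(E)$, and then the strict inequality $\dima(E)<(q/p)(n-p)$ follows from the self-improvement of the Aikawa condition, Lemma~\ref{l.aikawa}(C) (which applies since $q<p^*$ forces $(q/p)(n-p)<n$).

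For sufficiency, your interpolation between Theorem~\ref{t.hs.thin} and the Sobolev inequality is valid when $\dima(E)<n-1$, and you correctly isolate the remaining case $n-1\le\dima(E)<(q/p)(n-p)$ as the obstacle. The paper bypasses this obstacle entirely by invoking not Theorem~\ref{t.hs.thin} but the more general Theorem~\ref{t.hardy} from Section~\ref{s.horiuchi}. That theorem carries a disjunctive hypothesis: either $\dima(E)<n-1$, \emph{or} $\beta$ satisfies the bound~\eqref{eq.extra}. For $\beta=0$ the bound~\eqref{eq.extra} reads
\[
0\;\le\;\frac{(p-1)(qp+np-nq)}{qp+p-q},
\]
which holds automatically because $q\le p^*$ gives $qp+np-nq\ge 0$ and $qp+p-q=p+q(p-1)>0$. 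Hence Theorem~\ref{t.hardy} applies directly under the sole dimensional assumption $\dima(E)<(q/p)(n-p)$; the porosity hypothesis there is guaranteed by Lemma~\ref{l.aikawa}(D) since $(q/p)(n-p)<n$. Thus your proposed ``plan to revisit Section~\ref{s.horiuchi}'' is precisely what the paper does, and the result is already packaged as Theorem~\ref{t.hardy}---no interpolation step is needed at all.
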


Sections~\ref{s.horiuchi} and~\ref{s.application} contain, respectively,
both sufficient and necessary conditions for more general global 
Hardy--Sobolev inequalities as well. In the weighted case $\beta\neq 0$
these do not always give full characterizations, but they nevertheless
complement each other and show that
our sufficient conditions are not too far from being optimal. 
For instance, when $\beta<0$,
we need to require in the necessary condition of Theorem~\ref{t.res.beta<0}
that $E$ is either compact (and porous)
or that $E$ satisfies an \emph{a priori} dimensional bound. The
necessity of some extra assumption is shown by an example, whose
justification requires a closer look on the sufficient conditions for 
the Hardy--Sobolev inequality~\eqref{e.hardy-sobo} in the case when the complement 
of the domain $G$ is `thick'
and $\beta\le 0$. Such a result is established in Section~\ref{s.thick}.

Finally, in Section~\ref{s.nec} we prove that an open set which
admits the Hardy--Sobolev inequality~\eqref{e.hardy-sobo} has to satisfy
both local and global dimensional dichotomies: Either the complement
has (locally) a large Hausdorff (or lower Minkowski)
dimension or a small Assouad dimension.
(The global result is formulated earlier in the paper in Theorem~\ref{t.dichotomy}.)
Contrary to the sufficient conditions, as far as we know
no general necessary conditions for
$(q,p,\beta)$-Hardy--Sobolev inequalities have been considered in
the literature when $q>p$. For $q=p$, i.e.\ for $(p,\beta)$-Hardy
inequalities, the corresponding dichotomy is well known, 
see~\cite{MR1948106,MR2442898}. 

We end this introduction with a brief overview of the previously known
sufficient conditions for Hardy--Sobolev inequalities.
In the case when $E\subset\R^n$ is an 
$m$-dimensional subspace, $1\le  m\le n-1$,
and $G=\R^n\setminus E$,
it is due to Maz'ya~\cite[Section 2.1.6]{Mazya1985} that
the global version of the 
$(q,p,\beta)$-Hardy--Sobolev inequality~\eqref{e.hardy-sobo} holds for all functions 
$f\in C_0^\infty(\R^n)$ if $m<\frac q p (n-p+\beta)$;  
notice how
this corresponds to the dimensional bound given above, 
since here $\dima(E)=m$. 
Badiale and Tarantello~\cite{BadialeTarantello2002} 
 (essentially)  rediscovered 
Maz'ya's result for $\beta=0$,
 and applied these inequalities to study the
properties of the solutions for a certain elliptic
partial differential problem related to the 
dynamics of galaxies.
See also Gazzini--Musina~\cite{GazziniMusina2009}
and the references therein for other applications
of Hardy--Sobolev inequalities 
where the distances are taken to
subspaces of $\R^n$.
For  $m=0$, i.e.\ $E=\{0\}$,
the corresponding Hardy--Sobolev inequality is known as
Caffarelli--Kohn--Nirenberg inequality, since
this  case first appeared in~\cite{CaffarelliKohnNirenberg1984}.

For bounded domains with Lipschitz (or H\"older) boundary,
Hardy-Sobolev inequalities have been discussed in \cite[Section~21]{MR1069756}.
Let us also mention that unweighted $(q,p)$-Hardy--Sobolev inequalities follow from the more general
`improved Hardy inequalities' of Filippas, Maz'ya and Tertikas~\cite{FilippasMazyaTertikas2007}, 
under the assumptions that $2\le p < n$, $p< q \le p^*$, $G\subset \R^n$ is a bounded domain
with a $C^2$-smooth boundary, and the distance function satisfies the condition
$-\Delta \delta_{\partial G}\ge 0$ (this 
 is the case $k=1$ 
of~\cite[Theorem~1.1]{FilippasMazyaTertikas2007}).
Moreover, the inequalities in~\cite{FilippasMazyaTertikas2007} contain an additional `Hardy' term 
(with the best constant) on the left-hand side of the inequality~\eqref{e.hardy-sobo},
so these inequalities are much stronger than than the $(q,p)$-Hardy--Sobolev inequality---but 
of course the results in~\cite{FilippasMazyaTertikas2007} are subject to much stronger assumptions, as well. 
For irregular domains satisfying a `plumpness' condition, 
Hardy--Sobolev inequalities have also 
been studied by Edmunds and Hurri-Syrj\"anen  
in~\cite{MR2844457}.

\subsection*{Notation}

Throughout the paper we assume that $G$ is a
non-empty open set in $\R^n$, $n\geq 2$, with
a non-empty boundary. 
The open ball centered at $x\in \R^n$ and with radius $r>0$ is  $B(x,r)$.
The Euclidean
distance from $x\in\R^n$ to a given set $E$ in $\R^n$ is written as $\dist(x,E)=\delta_E(x)$.
The diameter of $E$  is $\diam(E)$.
We write $\chi_E$ for the characteristic function of a set $E$.
The boundary of $E$ is written
as $\partial E$, its closure is written as $\overline{E}$, and 
the complement of $E$ is $E^c=\R^n\setminus E$.
The Lebesgue $n$-measure of a
measurable set $E \subset\R^n$ is
$\lvert E\rvert$. 
If $0<|E|<\infty$,
the integral average of a function $f\in L^1(E)$ is 
$f_E=\vint_E f\,dx = |E|^{-1} \int_E f\,dx$.

All cubes we use are closed and have 
their sides parallel to the coordinate axes.
For a $\lambda>0$ and a cube $Q$ in $\R^n$, we denote by $\lambda Q$ the 
cube with the same center as $Q$ but with side length $\lambda$
times that of $Q$.
The letters $C$ and $c$ 
will denote positive constants whose values are
not necessarily the same at each occurrence. 
If there exists a constant $C>0$ such that $a\le C b$, we sometimes write $a\lesssim b$,
and if $a\lesssim b\lesssim a$ we write $a\simeq b$ and say that $a$ and $b$ are comparable.

\subsection*{Acknowledgments}

J.L.\ wishes to thank Petteri Harjulehto for inspiring 
questions and discussions related to Hardy--Sobolev inequalities.
J. L. has been supported by the Academy of Finland,  grant no.~252108.

\section{Interpolation}\label{s.interpolate}

We show in this section
how (weighted) Hardy--Sobolev inequalities can be
obtained by interpolating between (weighted) Hardy inequalities and (unweighted)
Sobolev inequalities. Recall that the unweighted Sobolev inequalities
are valid for all open sets.

\begin{thm}\label{thm.H to HS}
Assume that $1\le p < n$ and $\beta\in\R$.
 If $G$ admits a $(p,p,\beta)$-Hardy--Sobolev inequality
(i.e., a $(p,\beta)$-Hardy inequality),  then
 $G$ admits $(q,p,\beta)$-Hardy--Sobolev inequalities for all 
exponents $p\le q \le p^*=np/(n-p)$. 
\end{thm}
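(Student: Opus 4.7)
The plan is to combine the $(p,\beta)$-Hardy hypothesis with the unweighted Sobolev inequality \eqref{eq.sobo} (which holds for every open set) to first obtain the weighted endpoint $q = p^*$, and then to interpolate via H\"older's inequality between that endpoint and the Hardy inequality ($q=p$).

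For the endpoint $q = p^*$, I would apply \eqref{eq.sobo} to $g := f\,\delta_{\partial G}^{\beta/p}$. Because $f \in C_0^\infty(G)$ has compact support in $G$, $\delta_{\partial G}$ is bounded below on $\operatorname{supp} f$, so $g$ is Lipschitz with compact support in $G$ (and may be taken $C^\infty$ after mollification, so \eqref{eq.sobo} extends to it by approximation). The Leibniz rule and $|\nabla \delta_{\partial G}|\le 1$ a.e.\ give the pointwise bound
\[
|\nabla g|^p \le C(p,\beta)\bigl(|\nabla f|^p\,\delta_{\partial G}^{\beta} + |f|^p\,\delta_{\partial G}^{\beta - p}\bigr).
\]
Substituting into \eqref{eq.sobo}, and noting that $\beta p^*/p = n\beta/(n-p) = (p^*/p)(n-p+\beta) - n$, one obtains
\[
\biggl(\int_G |f|^{p^*}\,\delta_{\partial G}^{(p^*/p)(n-p+\beta) - n}\,dx\biggr)^{p/p^*} \le C\int_G |\nabla f|^p\,\delta_{\partial G}^{\beta}\,dx + C\int_G |f|^p\,\delta_{\partial G}^{\beta - p}\,dx,
\]
and the extra Hardy-type term on the right is absorbed by the $(p,\beta)$-Hardy hypothesis. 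This yields the $(p^*,p,\beta)$-Hardy--Sobolev inequality.

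For intermediate $p < q < p^*$, I would interpolate. Set $a = (p^* - q)/(p^* - p)$ and $b = 1 - a = (q-p)/(p^* - p)$, so $a, b \in (0,1)$ and $ap + bp^* = q$; a short computation with $p^* = np/(n-p)$ then yields $a(\beta - p) + b\,n\beta/(n-p) = (q/p)(n-p+\beta) - n$. Consequently the integrand on the left of \eqref{e.hardy-sobo} factors as
\[
|f|^q\,\delta_{\partial G}^{(q/p)(n-p+\beta) - n} = \bigl(|f|^p\,\delta_{\partial G}^{\beta - p}\bigr)^{a}\bigl(|f|^{p^*}\,\delta_{\partial G}^{n\beta/(n-p)}\bigr)^{b},
\]
and H\"older's inequality with conjugate exponents $1/a$ and $1/b$ gives
\[
\int_G |f|^q\,\delta_{\partial G}^{(q/p)(n-p+\beta) - n}\,dx \le \biggl(\int_G |f|^p\,\delta_{\partial G}^{\beta - p}\,dx\biggr)^{a}\biggl(\int_G |f|^{p^*}\,\delta_{\partial G}^{n\beta/(n-p)}\,dx\biggr)^{b}.
\]
Bounding the first factor by the Hardy hypothesis and the second by the endpoint from the previous step produces $C\bigl(\int_G |\nabla f|^p\,\delta_{\partial G}^{\beta}\,dx\bigr)^{a + bp^*/p}$ on the right, and $a + bp^*/p = q/p$ (equivalent to $ap + bp^* = q$). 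Taking $q$-th roots yields \eqref{e.hardy-sobo}.

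The argument is essentially algebraic, so I do not foresee any serious obstacle. The main conceptual point is that H\"older interpolation applied \emph{directly} against the unweighted Sobolev inequality does not produce the weight $\delta_{\partial G}^{\beta}$ required on the right-hand side of \eqref{e.hardy-sobo}; one must therefore first upgrade \eqref{eq.sobo} to the weighted endpoint $q = p^*$ before interpolating. The only analytic subtlety is the non-smoothness of $\delta_{\partial G}^{\beta/p}$ when $\beta\ne 0$, which is handled by the compactness of $\operatorname{supp} f$ inside $G$ together with a standard mollification.
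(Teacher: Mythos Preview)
Your proposal is correct and follows essentially the same route as the paper: the paper first isolates the endpoint $q=p^*$ as a separate lemma (applying the unweighted Sobolev inequality to $|f|\,\delta_{\partial G}^{\beta/p}$ and absorbing the Hardy term exactly as you do), and then interpolates via H\"older's inequality with exponents $\alpha=p^2/(np-nq+qp)$ and $\alpha'=\alpha/(\alpha-1)$, which are precisely your $1/a$ and $1/b$. The arithmetic and the handling of the non-smoothness of $\delta_{\partial G}$ match as well.
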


Let us first prove the following special case; all the other inequalities
can then be obtained with the help of H\"older's inequality.

\begin{lem}\label{lem.w-sobo}
Let $1\le p < n$ and $\beta\in\R$.
 If $G$ admits a $(p,p,\beta)$-Hardy--Sobolev inequality, then
 $G$ admits a $(p^*,p,\beta)$-Hardy--Sobolev inequality. 
\end{lem}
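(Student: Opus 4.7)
The plan is to deduce the weighted $p^*$ estimate by applying the \emph{unweighted} Sobolev inequality on $\R^n$ to an auxiliary function of the form $v = f\, \delta_{\partial G}^{\gamma}$, and then using the $(p,\beta)$-Hardy inequality to absorb the lower-order term produced by differentiating the weight. The critical observation is that the exponent $\gamma = \beta/p$ makes every power of $\delta_{\partial G}$ come out correctly: on the one hand, $\gamma p = \beta$ recovers the weight on the right-hand side; on the other hand, $(\gamma-1)p = \beta - p$ is exactly the weight appearing in the Hardy inequality; and finally $\gamma p^{*} = \beta p^{*}/p = n\beta/(n-p)$, which, one checks, matches the target weight $(p^{*}/p)(n-p+\beta) - n$ in the $(p^{*},p,\beta)$-Hardy--Sobolev inequality.

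Concretely, fix $f \in C_0^\infty(G)$ and set $\gamma = \beta/p$ and $v = f\, \delta_{\partial G}^{\gamma}$. Since $\mathrm{supp}(f)$ is compact in $G$, we have $\delta_{\partial G} \ge c > 0$ on $\mathrm{supp}(f)$, so $v$ is a compactly supported Lipschitz function on $\R^n$ (extend by zero). The unweighted Sobolev inequality applied to $v$ on $\R^n$ (which holds for every compactly supported Lipschitz function, by standard mollification) gives
\begin{equation*}
\biggl(\int_{G} \lvert f\rvert^{p^{*}} \delta_{\partial G}^{\gamma p^{*}}\,dx\biggr)^{1/p^{*}}
= \|v\|_{p^{*}}
\le C\,\|\nabla v\|_{p}.
\end{equation*}
Using the product rule (valid a.e.\ since $\delta_{\partial G}$ is Lipschitz with $|\nabla \delta_{\partial G}| \le 1$) and the elementary bound $|a+b|^{p} \le 2^{p-1}(|a|^p + |b|^p)$, one obtains
\begin{equation*}
\lvert \nabla v\rvert^{p} \le C\bigl(\delta_{\partial G}^{\beta} \lvert \nabla f\rvert^{p} + \lvert f\rvert^{p}\, \delta_{\partial G}^{\beta - p}\bigr).
\end{equation*}
Finally, the standing assumption is the $(p,\beta)$-Hardy inequality, which bounds $\int_G |f|^p \delta_{\partial G}^{\beta-p}\,dx$ by $C\int_G |\nabla f|^p \delta_{\partial G}^{\beta}\,dx$, so after taking $p$-th roots the claim follows.

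The only step that is not entirely routine is the legitimacy of applying the Sobolev inequality to $v$, since $v$ is only Lipschitz (not $C_0^\infty$) and, when $\beta < 0$, the weight $\delta_{\partial G}^{\gamma}$ blows up near $\partial G$. The former issue is handled by a standard mollification-and-truncation approximation of $v$ by $C_0^\infty(\R^n)$ functions. The latter is not really an issue: because $f$ vanishes in a neighborhood of $\partial G$, so does $v$, and $\delta_{\partial G}^{\gamma}$ is smooth and bounded on $\mathrm{supp}(f)$. I expect this to be the main (modest) technical point in the written-out proof; everything else is algebraic bookkeeping of the exponents of $\delta_{\partial G}$.
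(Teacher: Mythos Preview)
Your proof is correct and is essentially identical to the paper's: the paper also sets $g=|f|\,\delta_{\partial G}^{\beta/p}$, applies the unweighted Sobolev inequality to $g$, estimates $|\nabla g|$ by the product rule, and absorbs the term $\int_G |f|^p\delta_{\partial G}^{\beta-p}\,dx$ via the assumed $(p,\beta)$-Hardy inequality. The only cosmetic difference is that the paper works with $|f|$ rather than $f$, which is immaterial.
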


\begin{proof}
Let $f\in C_0^\infty(G)$ and write $g=|f|\delta_{\partial G}^{\beta/p}$.
Then $g$ is a Lipschitz function with a compact support in $G$, and the
gradient of $g$ satisfies (almost everywhere)
\[
|\nabla g|\le |\nabla f|\delta_{\partial G}^{\beta/p} + 
\tfrac{|\beta|}{p}|f|\delta_{\partial G}^{\beta/p-1}\,.
\]
The Sobolev inequality for $g$ (which holds by approximation),
the above estimate for $|\nabla g|$, and the $(p,p,\beta)$-Hardy--Sobolev inequality
(i.e.\ $(p,\beta)$-Hardy inequality) for $f$ imply that
\[\begin{split}
\biggr(\int_G |f&|^{np/(n-p)}  \delta_{\partial G}^{n\beta/(n-p)}\,dx \biggl)^{(n-p)/np}
 = \biggr(\int_G |g|^{np/(n-p)}\,dx \biggl)^{(n-p)/np}
\\&\le C_1 \bigg(\int_G |\nabla g|^p\,dx\bigg)^{1/p}\\
& \le C_1 \bigg\{\bigg( \int_G |\nabla f|^p\delta_{\partial G}^{\beta}\,dx\bigg)^{1/p}
  +  \frac{\lvert \beta\rvert}{p}\bigg(\int_G |f|^p\delta_{\partial G}^{\beta-p}\,dx\bigg)^{1/p}\bigg\}\\
& \le C_2 \bigg( \int_G |\nabla f|^p\delta_{\partial G}^{\beta}\,dx\bigg)^{1/p}\,,
\end{split}\]
which yields the $(p^*,p,\beta)$-Hardy--Sobolev inequality for $f$. 
\end{proof}

\begin{proof}[Proof of Theorem~\ref{thm.H to HS}]
Let $p < q < p^*$ 
and write $\alpha=p^2/(np-nq+qp)$, $\alpha'=\alpha/(\alpha-1)$.
Assume that $f\in C^\infty_0(G)$.
A straightforward computation for the exponents and H\"older's inequality 
(for exponents $\alpha$ and $\alpha'$)
yields 
\begin{equation}\label{e.holder}
\begin{split}
\bigg(\int_{G} \lvert f\rvert^q \,\delta_{\partial G}^{(q/p) (n-p+\beta) - n}\,dx\bigg)^{1/q}
& = \bigg(\int_{G} \lvert f\rvert^{\frac p \alpha + \frac {p^*}{\alpha'}}
             \,\delta_{\partial G}^{\frac{\beta-p}{\alpha}+\frac{n\beta}{(n-p)\alpha'}}\,
             dx\bigg)^{1/q}\\
& \le \biggl(\int_G\lvert f\rvert^p\,\delta_{\partial G}^{\beta-p}\,dx\biggr)^{\frac 1 {q\alpha}}
    \biggl(\int_G\lvert f\rvert^{p^*}\,\delta_{\partial G}^{\frac{n\beta}{n-p}}\,dx\biggr)^{\frac 1 {q\alpha'}}\,.
\end{split}
\end{equation} 
By the assumptions and Lemma~\ref{lem.w-sobo}, we now have available
the two `extreme' Hardy--Sobolev-inequalities, i.e.\ $(p,p,\beta)$- and $(p^*,p,\beta)$-Hardy--Sobolev inequalities.
Using these to the two integrals on the last line of~\eqref{e.holder}, respectively,  
and noting that $\frac 1 {q\alpha}+\frac {n}{n-p}\frac 1 {q\alpha'} = \frac 1 p$,
we obtain from~\eqref{e.holder} 
that
\begin{equation}\label{e.extremes}
\begin{split}
\bigg(\int_{G} \lvert f\rvert^q \,\delta_{\partial G}^{(q/p)(n-p+\beta) - n}\,dx\bigg)^{1/q}
& \le  C_3 \biggl(\int_G\lvert \nabla f\rvert^p\,\delta_{\partial G}^{\beta}\,dx\biggr)^{\frac 1 {q\alpha}}
    \biggl(\int_G \lvert \nabla f\rvert^p\,\delta_{\partial G}^{\beta}\,dx\biggr)^{\frac {n}{n-p}
     \frac 1 {q\alpha'}}\\
& =  C_3 \biggl(\int_G \lvert \nabla f\rvert^p\,\delta_{\partial G}^{\beta} \,dx\biggr)^{1/p}\,,
\end{split}
\end{equation} 
as desired.
\end{proof}

\begin{rem}\label{r.constants}
If $G$ admits a $(q,p,\beta)$-Hardy--Sobolev inequality, 
we use the notation $\kappa_{q,p,\beta}$ for
the best constant appearing in~\eqref{e.hardy-sobo};
recall that $\kappa_{p^*,p,0}<\infty$ for all open sets $G$ in $\R^n$.
In the proof of Lemma~\ref{lem.w-sobo} 
we have $C_1=\kappa_{p^*,p,0}$, and so
we obtain for $\kappa_{p^*,p,\beta}$ the following upper bound:
\[\kappa_{p^*,p,\beta}\le C_2=\kappa_{p^*,p,0}\bigl(1+ \tfrac{\lvert \beta\rvert}{p}\kappa_{p,p,\beta}\bigr)\,.\]
On the other hand, the constant in the proof of Theorem~\ref{thm.H to HS}
is $C_3 = \kappa_{p,p,\beta}^{p/(q\alpha)} \kappa_{p^*,p,\beta}^{p^*/(q\alpha ')}$,
where we have written (as in the proof of Theorem~\ref{thm.H to HS}) 
$\alpha=p^2/(np-nq+qp)$ and $\alpha'=\alpha/(\alpha-1)$.
Thus our interpolation yields the following estimate for the
best constant in the $(q,p,\beta)$-Hardy--Sobolev inequality,
in terms of the constants in the Sobolev and $(p,\beta)$-Hardy inequalities:
\begin{equation*}\label{e.bound*}
\kappa_{q,p,\beta} \le  \kappa_{p,p,\beta}^{p/(q\alpha)} 
\Bigl(\kappa_{p^*,p,0}\bigl(1+ \tfrac{\lvert \beta\rvert}{p}\kappa_{p,p,\beta}\bigr)\Bigr)^{p^*/(q\alpha ')}\,.
\end{equation*}
\end{rem}

\section{Concepts of dimension and the $P(s)$-property}\label{s.dim}

The $\lambda$-dimensional 
Hausdorff measure 
and Hausdorff content
of $E\subset\R^n$ are denoted by $\Ha^\lambda(E)$
and $\Ha_\infty^\lambda(E)$, respectively,
and the Hausdorff dimension of $E$ is
$\dimh(E)$; see \cite[Chapter~4]{MR1333890}.
Besides this well-known notion, we will need several other
concepts of dimension in our results to describe various geometric properties
of sets.

When $A\subset \R^n$ is bounded and $r>0$, we let $N(A,r)$ denote the minimal
number of (open) balls 
of radius $r$ 
and centered at $A$
that are needed to cover the set $A$.  
The $\lambda$-dimensional \emph{Minkowski content} 
of a bounded set $E\subset \R^n$ is then defined to be
\[
\Mi_r^{\lambda}(E)=N(E,r)r^\lambda\,,
\]
and 
the \emph{upper} and \emph{lower Minkowski dimensions} of $E$ 
are
\[
\udimm(E)=\inf\big\{\lambda \ge 0 : \limsup_{r\to 0}\Mi_r^\lambda(E)=0\big\}
\]
and
\[
\ldimm(E)=\inf\big\{\lambda \ge 0 : \liminf_{r\to 0} \Mi_r^\lambda(E)=0\big\}\,,
\]
respectively. 



For general $E\subset\R^n$ we define the following 
`localized' versions of
Minkowski dimensions:
The \emph{(upper) Assouad dimension} of $E$
is defined by setting 
\begin{align*}
&\dima(E) =
\udima(E) \\&= \inf\biggl\{\lambda\ge 0 : N(E\cap B(x,R),r)\le
 C_\lambda\biggl(\frac r R\biggr)^{-\lambda}\ \text{ for all }
 x\in E,\ 0<r<R<\diam(E)\biggr\}\,.
\end{align*}
This upper Assouad dimension is the `usual' Assouad dimension
found in the literature, 
and usually only the notation $\dima(E)$ is used.
Conversely, we define the \emph{lower Assouad dimension} of $E$
to be
\begin{align*}
&\ldima(E) \\&= \sup\biggl\{\lambda\ge 0 : N(E\cap B(x,R),r)\ge
 c_\lambda\biggl(\frac r R\biggr)^{-\lambda}\ \text{ for all }
 x\in E,\ 0<r<R<\diam(E)\biggr\}\,.
\end{align*}
It is clear from the definitions that for a bounded set $E$ in $\R^n$
we always have  
\[
\ldima(E)\le \ldimm(E)\le \udimm(E)\le \udima(E)\,.
\]
In addition, if $E\subset \R^n$ is closed, then
$\ldima(E)\le \dimh(E\cap B)$ for all balls centered in $E$; 
see~\cite[Lemma~2.2]{KLV}. 
We refer to~\cite{Assouad,Fraser,KLV,Larman1967,MR1608518}
for more information on these 
and closely related 
concepts. 

A closed set $E\subset\R^n$ is said to be (Ahlfors) $\lambda$-regular
(or a $\lambda$-set), for $0\le\lambda\le n$, if
there is a constant $C\ge 1$ such that
\[
C^{-1} r^\lambda \le \Ha^{\lambda}(E\cap B(x,r))\le C r^\lambda
\]
for every $x\in E$ and all $0<r<\diam(E)$. If $E$ is a $\lambda$-regular set, then 
all of the above dimensions coincide and are equal to $\lambda$, and so in particular
$\ldima(E) = \udima(E) = \lambda$; see e.g.~\cite{KLV} for details.

Next, we recall the following `Aikawa condition'
for the intergability of the distance function, which is
closely related to the (upper) Assouad
dimension: 
When $\emptyset\not=E\subset \R^n$ is a closed set, we  
let $\mathcal{A}(E)$ be the set of all $s\ge 0$ 
for which there
is a constant $C>0$ such that inequality
\begin{equation}\label{e.assouad}
\int_{B(x,r)} \dist(y,E)^{s-n}\,dy\le C r^s
\end{equation}
holds whenever $x\in E$ and  $0<r< \diam(E)$.
This condition was used by Aikawa~\cite{Aikawa1991} in connection to the so-called
quasiadditivity of capacity,  which has subsequently turned out to be
intimately related to Hardy inequalities, we refer 
to~\cite{lehrbackHardyAssouad,MR2442898,MR3055588}.

The following lemma collects useful properties related to
the Aikawa condition and the (upper) Assouad dimension.
Recall that a set $E \subset \R^n$ is 
said to be \emph{porous}, 
if there is a constant $0<c<1$ such that
for every $x \in E$ and all $0<r<\diam(E)$ there exists a point 
$y\in \R^n$ such that $B(y,cr) \subset B(x,r) \setminus E$.

\begin{lem}\label{l.aikawa}
Let $E\not=\emptyset$ be a closed set. 
\begin{itemize}
\item[(A)]
We have
$\udima(E)=\inf \mathcal{A}(E)\le n$.
\item[(B)] If $\udima(E)<s$ or $n\le s$, 
then $s\in \mathcal{A}(E)$.
\item[(C)] The Aikawa condition is self-improving:
If
$s\in \mathcal{A}(E)$
with $0<s<n$, then there is $0<s'<s$ such that $s'\in\mathcal{A}(E)$;
in particular $\udima(E)<s$. 
\item[(D)] The set $E$ is porous if and only if $\udima(E)<n$.
In particular, we have that $\lvert E \rvert = 0$ if $\udima(E)<n$. 
\item[(E)]
Let $E\not=\emptyset$ be a compact set in $\R^n$ and let $s>0$.
Then $s\in \mathcal{A}(E)$ if and only if for every (or, equivalently, for some) $0<R\le \infty$ there exists
a constant $C>0$  such that
inequality~\eqref{e.assouad} holds whenever $x\in E$ and $0<r<R$.
\end{itemize}
\end{lem}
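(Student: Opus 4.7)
The plan is to prove the five assertions in the dependency-friendly order (B), (C), (A), (D), (E), since (A) is a formal consequence of the first two. For (B), I would use a direct dyadic-annulus decomposition. When $s\geq n$, the pointwise bound $\dist(y,E)^{s-n}\le r^{s-n}$ on $B(x,r)$ (using $x\in E$) gives the estimate at once. For the essential case $\udima(E)<\lambda<s<n$, I would split $B(x,r)$ into shells $A_k=\{y\in B(x,r): 2^{-k-1}r<\dist(y,E)\le 2^{-k}r\}$, cover $E\cap B(x,r)$ by $C(2^k)^\lambda$ balls of radius $2^{-k}r$ via the Assouad covering property, and observe that a moderate enlargement of this cover contains $A_k$, yielding $|A_k|\lesssim 2^{k\lambda}(2^{-k}r)^n$. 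Summation gives
\[
\int_{B(x,r)}\dist(y,E)^{s-n}\,dy\lesssim r^s\sum_{k\ge 0}2^{-k(s-\lambda)},
\]
which converges since $s>\lambda$.

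Part (C) is the main obstacle, and I would follow the self-improvement argument of Aikawa \cite{Aikawa1991} as refined in \cite{lehrbackHardyAssouad,KLV}: the scale-invariant integrability of the weight $\dist(\cdot,E)^{s-n}$ with $s<n$ is a reverse-H\"older/Muckenhoupt type property whose exponent can be pushed slightly below $s$ by a Gehring-style covering and H\"older argument on the level sets of the distance function. The constraint $s<n$ is essential because only then is the integrand a genuine singular weight. A small amount of bookkeeping is required when $\diam(E)=\infty$, but the argument is local and reduces to the finite case.

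With (B) and (C) in place, (A) is formal: (B) gives $n\in\mathcal{A}(E)$ and, letting $s\searrow\udima(E)$, also $\inf\mathcal{A}(E)\le\udima(E)\le n$; conversely, if $s\in\mathcal{A}(E)$ with $s<n$, then (C) gives $\udima(E)<s$, while if no such $s$ exists then $\inf\mathcal{A}(E)=n\ge\udima(E)$. For (D), the porosity characterization is classical (see \cite{MR1608518,KLV}): iterating porosity at successive dyadic scales produces an exponential decay of covering numbers and hence $\udima(E)<n$; conversely, for $\udima(E)<s<n$ the bound in (B) would be violated if $\dist(y,E)<\eta r$ held throughout $B(x,r)$ for arbitrarily small $\eta$, so a ball of radius $\simeq r$ lies in $B(x,r)\setminus E$, giving porosity, and the Lebesgue null statement then follows because a porous set cannot have interior points. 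Finally, (E) is obtained by noting that compactness of $E$ bounds $\diam(E)$ and that, for $r\ge R$, one splits $B(x,r)$ into $B(x,R)$ (handled by hypothesis) and an annular remainder where $\dist(y,E)\gtrsim |y-x|$ once $|y-x|\ge 2\diam(E)$; the extra contribution is then controlled by $\int_{R<|y-x|<r}|y-x|^{s-n}\,dy\lesssim r^s$ using $s>0$.
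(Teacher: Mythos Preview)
Your treatment of (A)--(D) is correct and in fact more explicit than the paper, which mostly defers to the literature (\cite{MR3055588}, \cite{lehrbackHardyAssouad}, \cite{MR1608518}); deriving (A) from (B) and (C), and obtaining porosity from $\udima(E)<n$ via the Aikawa integral bound, are both clean. One small looseness in (D): ``a porous set cannot have interior points'' does not by itself give $|E|=0$ (a fat Cantor set has empty interior but positive measure); you need the Lebesgue density theorem together with porosity.

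The genuine gap is in (E). Your decomposition $B(x,r)=B(x,R)\cup\{R<|y-x|<r\}$ and the bound $\dist(y,E)\gtrsim|y-x|$ are only compatible when $R\ge 2\diam(E)$. In the intermediate annulus $\{R<|y-x|<2\diam(E)\}$ points $y$ may lie arbitrarily close to $E$, so for $s<n$ the integrand $\dist(y,E)^{s-n}$ is \emph{not} dominated by $|y-x|^{s-n}$, and your final estimate $\int_{R<|y-x|<r}|y-x|^{s-n}\,dy\lesssim r^s$ does not control the annular contribution. Since the nontrivial direction of (E) is precisely the case where a small $R<\diam(E)$ is given, this region cannot be avoided. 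The paper handles it by inserting a middle range $R\le r\le 2\diam(E)$ and using compactness: cover the tube $\{\dist(\cdot,E)<R/4\}$ by finitely many balls $B(x_j,R/2)$ with $x_j\in E$, on each of which the hypothesis applies; on the complement one has $\dist(y,E)\ge R/4$ and $|B(x,r)|\lesssim\diam(E)^n$. Only for $r>2\diam(E)$ does one split off $B(x,2\diam(E))$ and use $\dist(y,E)\simeq|y-x|$ on the far annulus, as you propose.
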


\begin{proof}
(A) This is proven in~\cite{MR3055588}.

(B) This is easy to see from the definitions and property (A).

(C) 
The proof is based on the Gehring lemma;
see e.g.~\cite[Lemma~2.2]{lehrbackHardyAssouad} for details.

(D) See \cite[Theorem~5.2]{MR1608518}.

(E) Let us outline the proof; the reader will find it straightforward to fill in the details.
We fix a number $0<R<2\diam(E)$ and
assume
that~\eqref{e.assouad} holds whenever $x\in E$ and $0<r<R$.
It suffices to prove that
there exists a constant $C>0$ such that
inequality~\eqref{e.assouad} holds 
 whenever $x\in E$ and $0<r<\infty$;
clearly, we may also assume that $0<s<n$. 
If $0< r< R$, inequality~\eqref{e.assouad} holds by the assumption. 
If $R \le  r\le 2\diam(E)$, 
we use 
the compactness of $E$ 
 to find points $x_1,\ldots,x_K\in E$ such that
\begin{equation}\label{e.rel}
\{ y\in \R^n\,:\, \dist(y,E) < R/4\}  \subset \bigcup_{j=1}^K B(x_j,R/2)\,.
\end{equation}
To estimate the left-hand side of~\eqref{e.assouad}, 
we split the ball $B(x,r)$ in two parts: $A=B(x,r)\cap \{y\,:\,\dist(y,E)<R/4\}$
and  $B(x,r)\setminus A$. 
The integral over the set $A$ is estimated by using~\eqref{e.rel}
and the validity of inequality~\eqref{e.assouad} for all radii up to $R$, 
and the integral over the set $B(x,r)\setminus A$ is easy to
estimate since therein the distances to $E$ are larger than $R/4$ and $r$ is dominated by $2\diam(E)$.
Finally, if
$2\diam(E)<r<\infty$, we split $B(x,r)$ as $D=B(x,2\diam(E))$ and
$B(x,r)\setminus D$. 
The integral over the set $D$ is treated as in the case $R \le  r\le 2\diam(E)$ 
above, and 
the integral over the set $B(x,r)\setminus D$
is estimated by using the fact that therein the distance of 
a point to the set $E$ is comparable to its distance to the point $x$.
 We conclude that~\eqref{e.assouad} holds for all $0<r<\infty$.
\end{proof}

In~\cite{MR1021144}, Horiuchi introduced the following 
$P(s)$-property 
in order to study imbeddings for weighted Sobolev spaces. 
This property was subsequently applied also in~\cite{MR1118940}. 
Here we denote $E_\eta=\{x\in\R^n \,: \delta_E(x)<\eta\}$,
that is, $E_\eta$ is the (open) $\eta$-neighborhood of $E$. 

\begin{defn}\label{d.horiuchi}
 Let $0\le s\le n$. A closed set $E\subset\R^n$ has the property $P(s)$ if $|E|=0$ and
 there is a constant $C>0$ such that
 \[
 |B\cap (E_{\eta_2}\setminus E_{\eta_1})|\le C \eta_2^{s-1}(\eta_2-\eta_1)\diam(B)^{n-s}
 \quad\text{ if } 1\le s \le n 
 \]
and
 \[
 |B\cap (E_{\eta_2}\setminus E_{\eta_1})|\le C (\eta_2-\eta_1)^s \diam(B)^{n-s}
 \quad\text{ if } 0 \le  s < 1\,,
 \]
for all balls $B$ and numbers  $\eta_1,\eta_2$ satisfying $0\le\eta_1<\eta_2\le \diam(B)$. 
\end{defn}

\begin{rem} 
Horiuchi required the (respective) inequality in Definition \ref{d.horiuchi} to hold only for  balls $B$ and numbers $\eta_1,\eta_2$ satisfying inequalities $0\le\eta_1<\eta_2\le\diam(B)\le A_0$ with a fixed $A_0\in (0,\infty]$. For our purposes the given formulation is more suitable. 
Horiuchi also excludes the case $s=0$ in Definition \ref{d.horiuchi}; observe 
that all closed sets with zero Lebesgue measure have the property $P(0)$.  
\end{rem}

We now have the following theorem, 
which characterizes 
the (upper) Assouad dimension in terms of the  $P(s)$-property. 
This result also
clarifies the $P(s)$-property and immediately gives numerous examples of sets 
having this property and thus satisfying the main assumption in
Horiuchi's papers~\cite{MR1021144,MR1118940}.

\begin{thm}\label{t.hori_and_ass}
Let $E\subset\R^n$ be a closed set with $|E|=0$. Then
 \[
 \udima(E) = n - \sup\big\{0\le s\le n \,:\, E 
 \text{ {\rm has the property} } P(s)\big\}.
 \]
 In particular, the $P(s)$-property holds for all
 $0\le s < n-\udima(E)$.
\end{thm}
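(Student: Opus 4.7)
My plan is to establish the two inequalities
\begin{equation*}
\sup\{s\in [0,n] : E \text{ has } P(s)\} \le n-\udima(E)
\quad\text{and}\quad
\sup\{s\in [0,n] : E \text{ has } P(s)\} \ge n-\udima(E),
\end{equation*}
which together give the claim.

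For the first inequality (``$P(s)$ forces $\udima(E)\le n-s$''), I would use a standard packing argument. Given $x\in E$ and $0<r<R$, take a maximal $r$-separated subset $\{y_i\}_{i=1}^N$ of $E\cap B(x,R)$; the disjoint balls $B(y_i,r/2)$ then lie inside $B(x,2R)\cap E_{r/2}$. Applying $P(s)$ to $B=B(x,2R)$ with $\eta_1=0$ and $\eta_2=r/2$ reduces both regimes of the inequality to $|B\cap E_{r/2}|\le C r^s R^{n-s}$. Comparing this with the disjoint-ball volume $c_n N(r/2)^n$ forces $N\le C(R/r)^{n-s}$, whence $\udima(E)\le n-s$.

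For the second inequality (``$\udima(E)<n-s$ forces $P(s)$''), I would fix $\lambda\in(\udima(E),n-s)$ (possible by Lemma~\ref{l.aikawa}) and invoke the coarea formula for the $1$-Lipschitz function $\delta_E$---noting that $|\nabla\delta_E|=1$ almost everywhere since $|E|=0$---to write
\begin{equation*}
|B\cap (E_{\eta_2}\setminus E_{\eta_1})|
 = \int_{\eta_1}^{\eta_2}\mathcal{H}^{n-1}\bigl(B\cap \{\delta_E=t\}\bigr)\, dt.
\end{equation*}
The central step is a pointwise-in-$t$ level set bound
\begin{equation*}
\mathcal{H}^{n-1}\bigl(B\cap \{\delta_E=t\}\bigr)\le C\diam(B)^{\lambda} t^{n-1-\lambda},
\end{equation*}
which I would prove by covering $E\cap 3B$ at scale $t$ by $N\le C(\diam(B)/t)^\lambda$ balls $B(z_i,t)$ with $z_i\in E$ (available since $\lambda>\udima(E)$), observing that every $y\in B\cap\{\delta_E=t\}$ has a nearest point $e\in E\cap 3B$ lying in some $B(z_i,t)$---hence $y\in B(z_i,2t)$---and combining this with a per-ball estimate $\mathcal{H}^{n-1}(B(z_i,2t)\cap\{\delta_E=t\})\le Ct^{n-1}$.

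Once the pointwise level set bound is in hand, I would integrate in $t$ over $[\eta_1,\eta_2]$, choosing $\lambda\le n-1$ when $s\ge 1$ and $\lambda>n-1$ when $s<1$. For $s\ge 1$, the exponent $n-1-\lambda$ is non-negative, monotonicity gives $\int_{\eta_1}^{\eta_2}t^{n-1-\lambda}\, dt\le (\eta_2-\eta_1)\eta_2^{n-1-\lambda}$, and the resulting $\diam(B)^\lambda\eta_2^{n-1-\lambda}$ is dominated by $\diam(B)^{n-s}\eta_2^{s-1}$ via $\eta_2\le\diam(B)$ (since the ratio equals $(\eta_2/\diam(B))^{n-s-\lambda}\le 1$). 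For $0<s<1$, sub-additivity of $t\mapsto t^{n-\lambda}$ (with $0<n-\lambda<1$) produces $\int t^{n-1-\lambda}\, dt\le (\eta_2-\eta_1)^{n-\lambda}/(n-\lambda)$, and the resulting $\diam(B)^\lambda(\eta_2-\eta_1)^{n-\lambda}$ is dominated by $\diam(B)^{n-s}(\eta_2-\eta_1)^s$ by the same absorption using $\diam(B)\ge\eta_2-\eta_1$. The cases $s=0$ and $E\cap 3B=\emptyset$ are handled trivially from $|E|=0$ and from $\delta_E>\eta_2$ on $B$, respectively. I expect the main obstacle to be the per-ball estimate $\mathcal{H}^{n-1}(B(z_i,2t)\cap\{\delta_E=t\})\le Ct^{n-1}$: this is a standard but subtle fact about the parallel surface $\partial E_t$ of a closed set, and I would derive it by a further Assouad-type covering of $E\cap B(z_i,3t)$ at scale $t$ (yielding $O(1)$ centres, since here $R=3t$ and $r=t$) together with the elementary bound $\mathcal{H}^{n-1}(\partial B(e,t))\simeq t^{n-1}$ on individual spheres.
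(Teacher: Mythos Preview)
Your first direction (the packing argument showing $P(s)\Rightarrow\udima(E)\le n-s$) is correct and essentially coincides with the paper's approach.

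For the converse, your coarea-formula strategy is a reasonable alternative to the paper's organization (the paper splits into the cases $\eta_1<\eta_2/2$, handled via the Aikawa condition, and $\eta_2/2\le\eta_1<\eta_2$, handled via a Whitney cover of $E^c$). However, the key per-ball estimate
\[
\mathcal{H}^{n-1}\bigl(B(z_i,2t)\cap\{\delta_E=t\}\bigr)\le Ct^{n-1}
\]
is not established by your argument, and this is a genuine gap. You correctly note that every $y$ in this level set lies on \emph{some} sphere $\partial B(e,t)$ with $e\in E\cap B(z_i,3t)$, and that $E\cap B(z_i,3t)$ can be covered by $O(1)$ balls $B(e_j,t)$. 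But this only gives $y\in B(e_j,2t)$ for some $j$---an annulus containment---not $y\in\partial B(e_j,t)$. The level set $\{\delta_E=t\}$ is a union of spheres indexed by the (typically uncountable) set of nearest points in $E$, and covering $E$ at scale $t$ does not collapse this to finitely many spheres. Invoking $\mathcal{H}^{n-1}(\partial B(e_j,t))\simeq t^{n-1}$ therefore yields nothing; you are simply back to bounding $\mathcal{H}^{n-1}(\{\delta_E=t\}\cap B(e_j,2t))$, which is the original problem.

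The paper fills exactly this gap via a non-trivial geometric fact from \cite[Lemma~5.4]{KLV}: on each Whitney ball $B_i=B\bigl(x_i,\tfrac18\delta_E(x_i)\bigr)$ there exists a $2$-Lipschitz map from a subset of the sphere $\partial B_i$ \emph{onto} $B_i\cap\partial E_{\eta_1}$, which immediately gives $\mathcal{M}_\eps^{n-1}(B_i\cap\partial E_{\eta_1})\lesssim\eta_1^{n-1}$. This is precisely the per-ball surface-measure bound your approach needs (and coincides with the level-set estimate recorded as \cite[Corollary~5.10]{KLV}). If you import this lemma, your coarea-and-integrate argument goes through cleanly; without it, the proof is incomplete. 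You rightly flagged this step as the main obstacle---but the resolution you propose does not work.
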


\begin{proof}
 Assume first that $E$  has the property $P(s)$. 
 Fix a ball $B$ so that $\diam(B)\le \diam(E)$.
 Choosing $\eta_1=0$ and writing $\eta=\eta_2$,
 we find that 
 \[
 |B\cap E_\eta|\le C \eta^{s} \diam(B)^{n-s}
 \]
 for all numbers $0<\eta \le \diam(B)$, 
 regardless of whether $s\ge 1$ or $s<1$.
 From this it follows, by~\cite[Theorem~A.12]{MR1608518}, that
 (in the language of~\cite{MR1608518}) the set $E$ is $(n-s)$-homogeneous,
 and thus $\udima(E)\le n-s$ (see also~\cite[Theorem~5.1]{MR3055588} 
 whose proof is a short argument based on the Aikawa condition).
 This proves one direction (`$\le$') of the claimed equality.

 The converse inequality is somewhat more involved. Since $E$ always has the property $P(0)$, 
 we may  assume that $\udima(E)<n$. Let us fix $0 <s <  n$
 such that $\udima(E)<n-s$. It suffices to show that then $E$ has the property 
 $P(s)$, so let $B=B(w,R)$ be a ball in $\R^n$. 
 Without loss of generality, we may assume that $w\in E$. Let  us fix
 $0\le\eta_1< \eta_2\le \diam(B)=2R$.  If $\eta_1<\eta_2/2$, then 
 $\eta_2-\eta_1\simeq\eta_2$, and the desired estimate 
 follows from Lemma \ref{l.aikawa}(B,E). Indeed, 
 \[
 \lvert B\cap (E_{\eta_2}\setminus E_{\eta_1})\rvert
 \le  \lvert B\cap E_{\eta_2}\rvert\le \eta_2^{s} \int_B \dist(y,E)^{-s}\,dy 
 \lesssim \eta_2^s\, R^{n-s}\,.
\] 
 (Alternatively, this estimate could be obtained from~\cite[Theorem~A.12]{MR1608518} 
 or the proof of~\cite[Theorem~5.1]{MR3055588}).
 
 We may hence assume that $\eta_2/2\le \eta_1<\eta_2$. 
 Let $\W(E^c)=\{B_i\}$ be a Whitney-type cover of $E^c$ with balls
 $B_i=B\bigl(x_i,\frac 1 8 \delta_E(x_i)\bigr)$. In particular, the overlap of these balls is
 uniformly bounded. We also write 
 \[
  \W(E^c; 2B;\eta_1)=\{B_i\in \W(E^c) : 
        B_i\cap 2B \cap \partial E_{\eta_1} \neq \emptyset\}.
 \]
 It follows that $\dist(B_i,E)\simeq \diam(B_i)\simeq \eta_1$ for all $B_i\in\W(E^c;2B;\eta_1)$.
 Thus,
 the assumption $\udima(E)<n-s$ and Lemma \ref{l.aikawa}(B,E) yield
 \begin{equation}\label{e.w-balls}
 \begin{split}
  \#\W(E^c;2B;\eta_1)&\lesssim  \eta_1^{s-n} \sum_{B_i\in \W(E^c;2B;\eta_1)}
  \lvert B_i\rvert^{1-s/n}\\
  &\lesssim \eta_1^{s-n} \int_{cB} \dist(y,E)^{-s}\,dy
 \lesssim \eta_1^{s-n} R^{n-s}\,;
  \end{split}
 \end{equation}
in particular, we obtain that $\#\W(E^c;2B;\eta_1)\lesssim \eta_1^{s-n} R^{n-s}$.  
(See also~\cite[Lemma~4.4]{KLV} for another proof of this estimate.)
 
 In addition, the proof of~\cite[Lemma~5.4]{KLV} shows that, for each ball 
 $B_i\in \W(E^c;2B;\eta_1)$, there exists
 a $2$-Lipschitz mapping from a subset of $\partial B_i$ \emph{onto} 
 $B_i\cap \partial E_{\eta_1}$, and thus
 \begin{equation}\label{e.mink-balls}
 \Mi_\eps^{n-1}(B_i\cap \partial E_{\eta_1}) \le 4^{n-1} 
   \Mi_{C\eps}^{n-1}(\partial B_i) \simeq\eta_1^{n-1},
 \end{equation}
 where we write $\eps = \eta_2-\eta_1$.
 On the other hand, for each $B_i\in\W(E^c;2B;\eta_1)$, let
 $\{ B_j^i\}_j$ be a cover of $B_i\cap\partial E_{\eta_1}$ with balls
 $ B_j^i=B(y_j^i,\eps)$, where $y_j^i\in B_i\cap \partial E_{\eta_1}$
are chosen so that 
 \begin{equation}\label{e.choice-balls}
  \Mi_\eps^{n-1}(B_i\cap\partial E_{\eta_1}) \ge C_n \eps^{-1} \sum_j 
    | B_j^i|. 
 \end{equation}
 Then 
 $B\cap (E_{\eta_2}\setminus E_{\eta_1})\subset\bigcup_{i} \bigcup_{j} 2 B_j^i$, 
 and thus it follows from~\eqref{e.choice-balls}, 
 \eqref{e.mink-balls}, and~\eqref{e.w-balls} that
 \[
 \begin{split}
 |B\cap (E_{\eta_2}\setminus E_{\eta_1})| &  \le
   \sum_{B_i\in \W(E^c;2B;\eta_1)}  \sum_j    
        |2 B_j^i|
   \lesssim \eps \sum_{B_i\in \W(E^c;2B;\eta_1)} \eps^{-1} 
        \sum_{j} 
         | B_j^i|\\
   & \lesssim \eps \sum_{B_i\in \W(E^c;2B;\eta_1)} \Mi_\eps^{n-1}(B_i\cap\partial E_{\eta_1})
   \lesssim \eps \sum_{B_i\in \W(E^c;2B;\eta_1)} \eta_1^{n-1}\\
   & \lesssim \eps \eta_1^{n-1} \eta_1^{s-n} R^{n-s} \le \eta_2^{s-1}(\eta_2-\eta_1) R^{n-s}. 
 \end{split}
 \]
 This shows that $E$ has property $P(s)$, provided that $1\le s <n$. 
 Moreover, if 
 $0<s<1$, then (under the assumption $\eta_2/2\le \eta_1<\eta_2$)
 $\eta_1^{s-1}\le C \eta_2^{s-1} \le C (\eta_2-\eta_1)^{s-1}$, and thus
 \[
 |B\cap (E_{\eta_2}\setminus E_{\eta_1})| \lesssim \eta_1^{s-1}(\eta_2-\eta_1) R^{n-s}
                                     \lesssim (\eta_2-\eta_1)^{s} R^{n-s},
 \]
 proving that $E$ has property $P(s)$ in this case as well.
\end{proof}

\begin{rem}
 In particular, (the proof of) Theorem~\ref{t.hori_and_ass} shows that
 for our purposes we could equivalently define the property $P(s)$ 
 in the case $0<s<1$ in the same way as in the case $1\le s<n$.
\end{rem}

We also record the following lemma that 
will be useful in the proofs of Section~\ref{s.horiuchi}.

\begin{lem}\label{l.equiv}
Let $E\subset\R^n$ be a closed porous set.
If $s>\udima(E)$, 
then
\begin{equation}\label{eq.equiv}
\int_B \delta_E(x)^{s-n}\,dx \simeq \diam(B)^n \bigl(\diam(B) + \dist(B,E)\bigr)^{s-n}
\end{equation}
for all balls $B$ in $\R^n$,
where the constants of comparison are independent of the ball $B$.
\end{lem}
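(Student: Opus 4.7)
Write $B = B(x_0, r)$ and set $d = \dist(B, E)$, so the right-hand side of \eqref{eq.equiv} is $\simeq r^n (r + d)^{s-n}$. The plan is to split the argument according to whether $d \geq r$ or $d < r$. In the \emph{distant} regime $d \geq r$ everything is easy: every $x \in B$ satisfies $d \leq \delta_E(x) \leq d + 2r$, so $\delta_E \simeq d + r$ uniformly on $B$, and both sides of \eqref{eq.equiv} collapse to $\simeq |B|(d+r)^{s-n}$ with no further work.

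The substance of the proof lies in the \emph{close} regime $d < r$, where the right-hand side simplifies to $\simeq r^s$ and one must show both bounds carefully. Picking $z \in E$ with $|z - x_0| \leq r + d < 2r$ gives $B \subset B(z, 3r)$. For the upper bound I would use the Aikawa condition: Lemma~\ref{l.aikawa}(B) supplies $s \in \mathcal{A}(E)$, whence
\[
\int_B \delta_E(x)^{s-n}\,dx \leq \int_{B(z,3r)} \delta_E(x)^{s-n}\,dx \lesssim r^s,
\]
using Lemma~\ref{l.aikawa}(E) to extend \eqref{e.assouad} to all radii when $3r \geq \diam(E)$ (so $E$ is compact).

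For the lower bound, when $s \leq n$ the estimate is immediate: $\delta_E \leq 3r$ on $B$ together with $s - n \leq 0$ forces $\delta_E^{s-n} \geq (3r)^{s-n}$, which integrates to $\gtrsim r^s$. The case $s > n$ is the main obstacle; here I would invoke porosity of $E$, available by hypothesis and consistent with $\udima(E) < n$ via Lemma~\ref{l.aikawa}(D). If $\delta_E(x_0) \geq r/4$ then $\delta_E \simeq r$ on a fixed sub-ball of $B$ centered at $x_0$ and the lower bound follows directly. Otherwise choose $z \in E$ with $|z - x_0| < r/4$ and apply porosity at $z$ at scale $r/4$: this produces a ball $B(y, cr/4) \subset B(z, r/4) \setminus E$ that by the triangle inequality lies inside $B$ and satisfies $\delta_E \simeq r$ on its concentric half-ball, and integration over this hole gives $\gtrsim r^s$. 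A minor technicality is that porosity requires the scale to be below $\diam(E)$, so when $\diam(E) \ll r$ one instead observes that $E$ is confined to a small ball and almost all of $B$ is automatically at distance $\simeq r$ from $E$.
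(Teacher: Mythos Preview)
Your argument is correct. The paper itself does not write out a proof: it simply cites Theorem~\ref{t.hori_and_ass} together with \cite[Proposition~6.1]{MR1021144}, and then remarks that ``it is also easy to give a direct proof using the Aikawa condition and the porosity of $E$.'' What you have done is precisely to supply that direct proof. The split into the distant case $d\ge r$ (trivial two-sided comparison) and the close case $d<r$ (upper bound via the Aikawa condition through Lemma~\ref{l.aikawa}(B,E), lower bound via porosity when $s>n$) is exactly the elementary route the authors have in mind; your handling of the edge case $\diam(E)\lesssim r$ and of the radii exceeding $\diam(E)$ is also appropriate. The paper's primary route, by contrast, goes through the $P(s)$-property: Theorem~\ref{t.hori_and_ass} converts the Assouad bound into $P(s)$ for suitable $s$, and Horiuchi's Proposition~6.1 then yields~\eqref{eq.equiv} from $P(s)$. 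Your approach is more self-contained and avoids the detour through Definition~\ref{d.horiuchi}; the paper's citation-based approach is shorter on the page but imports more machinery.
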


\begin{proof}
This follows from 
Theorem~\ref{t.hori_and_ass} and~\cite[Proposition~6.1]{MR1021144}. 
However, it is 
also easy to give a direct proof
using the Aikawa condition and the porosity of $E$.
\end{proof}

\section{Main results}\label{s.main}

By Theorem~\ref{thm.H to HS}, sufficient conditions for a $(p,\beta)$-Hardy
inequality are also sufficient for $(q,p,\beta)$-Hardy--Sobolev inequalities,
for all $p \le q \le p^*$. Hence we obtain the following corollary
by combining the previously known sufficient conditions for $(p,\beta)$-Hardy inequalities---more 
precisely, Corollary~1.3 in~\cite{lehrbackHardyAssouad}---and Theorem~\ref{thm.H to HS}.

\begin{cor}\label{coro.hardy_ass}
 Let $1 <p\le q\le np/(n-p)<\infty$ and $\beta<p-1$.
 If $G\subset\R^n$
 is an open set and
 \[
 \udima(G^c) < n - p + \beta \quad\text{ or }\quad 
 \ldima(G^c) > n - p + \beta\,,
 \]
 then $G$ admits a $(q,p,\beta)$-Hardy--Sobolev inequality; 
 in the latter case, if $G$ is unbounded, then we require that also $G^c$ is unbounded.
\end{cor}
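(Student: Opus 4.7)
The plan is to assemble this corollary by chaining two results that are already available: the previously established sufficient conditions for weighted Hardy inequalities in terms of the Assouad dimensions, and the interpolation theorem proved in the previous section.

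First I would invoke \cite[Corollary~1.3]{lehrbackHardyAssouad}. Under the assumption $1<p<\infty$ and $\beta<p-1$, that result states precisely that the condition $\udima(G^c)<n-p+\beta$ is sufficient for the $(p,\beta)$-Hardy inequality on $G$ (this is the `thin complement' case), and that $\ldima(G^c)>n-p+\beta$ is sufficient in the `thick complement' case (with the added requirement that $G^c$ be unbounded whenever $G$ itself is unbounded, to rule out the trivial obstruction coming from the behavior at infinity). Either of these alternatives therefore yields the $(p,p,\beta)$-Hardy--Sobolev inequality for $G$.

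Second, I would feed the inequality just obtained into Theorem~\ref{thm.H to HS}. The hypotheses of that theorem require exactly $1\le p<n$, $\beta\in\R$, and the validity of the $(p,p,\beta)$-Hardy--Sobolev inequality (which is the $(p,\beta)$-Hardy inequality). Its conclusion is the $(q,p,\beta)$-Hardy--Sobolev inequality for every $q$ in the range $p\le q\le np/(n-p)=p^*$, which is exactly the range appearing in the corollary. Chaining these two statements gives the desired inequality.

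There is essentially no substantive obstacle here: all the analytic work is contained in the two ingredients being combined. The only point to double-check is that the hypotheses of Theorem~\ref{thm.H to HS} (notably $p<n$, which is implicit in the corollary since the upper bound $q\le np/(n-p)$ is required to be finite) and the hypotheses of \cite[Corollary~1.3]{lehrbackHardyAssouad} (notably $1<p<\infty$, $\beta<p-1$, and the unboundedness caveat in the thick case) are all matched by the statement of the corollary, which they are.
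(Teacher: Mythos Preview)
Your proposal is correct and matches the paper's own argument exactly: the corollary is obtained precisely by combining \cite[Corollary~1.3]{lehrbackHardyAssouad} (to get the $(p,\beta)$-Hardy inequality under either dimensional alternative, with the unboundedness caveat in the thick case) with Theorem~\ref{thm.H to HS} (to pass from $q=p$ to all $p\le q\le p^*$). There is nothing to add.
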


The second sufficient condition in Corollary \ref{coro.hardy_ass}
for the $(q,p,\beta)$-Hardy--Sobolev inequality in terms of the lower Assouad 
dimension $\ldima(G^c)$,
which corresponds to the case where the complement is `thick', 
turns out to be  rather sharp;
we refer to Theorem~\ref{t.dichotomy} below and also
to Section~\ref{s.nec}.  
Let us remark that, for $p-\beta>1$, the condition 
$\ldima(G^c) > n - p + \beta$ is equivalent to 
$G^c$ being \emph{uniformly $(p-\beta)$-fat}
(see~\cite[Remark~3.2]{KLV}). For the unweighted $p$-Hardy inequality 
(i.e., the $(p,p,0)$-Hardy--Sobolev inequality), 
uniform $p$-fatness is a well-known sufficient condition, see e.g.~\cite{Lewis1988}. 
The bound $\beta<p-1$ in Corollary~\ref{coro.hardy_ass} 
is sharp and necessary for this generality, 
 but  it can be removed for instance under additional accessibility 
conditions for $\partial G$ (cf.~\cite{KoskelaLehrback2009,MR2442898}).

On the other hand,
the 
following 
 Theorem~\ref{t.hs.thin} shows that the first
sufficient condition in Corollary \ref{coro.hardy_ass}, given in terms of the (upper) Assouad dimension
$\udima(G^c)$ and corresponding to the `thin' case, can be weakened
in two ways: The factor $q/p \ge 1$ appears, and the upper bound 
$\beta<p-1$
can be changed to the weaker assumption that $\udima(G^c)<n-1$
(cf.~Remark~\ref{r.impr.ha}).

\begin{thm}\label{t.hs.thin}
Let $1 \le  p\le q\le np/(n-p)<\infty$
and $\beta\in\R$. If $G\subset \R^n$ is an open set and
\begin{equation}\label{e.hs.thin}
\udima(G^c) < \min \biggl\{ \frac{q}{p} (n-p+\beta) \, , \,  n-1  \biggr\}\,,
\end{equation}
then $G$ admits a $(q,p,\beta)$-Hardy--Sobolev inequality.
\end{thm}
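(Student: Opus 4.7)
The plan is to deduce Theorem~\ref{t.hs.thin} from a $P(s)$-based Hardy--Sobolev inequality (to be proved in Section~\ref{s.horiuchi}) by using the characterization in Theorem~\ref{t.hori_and_ass} as the bridge. Since~\eqref{e.hardy-sobo} for $f \in C_0^\infty(G)$ is an immediate consequence of the corresponding global inequality for $f \in C_0^\infty(\R^n)$ with $E = G^c$ via trivial extension by zero, it is enough to prove the global version.

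First, I would use the dimensional bound~\eqref{e.hs.thin} to pick an exponent $s$ with
\[
\max\bigl\{1,\ n - \tfrac{q}{p}(n - p + \beta)\bigr\} \;<\; s \;<\; n - \udima(G^c)\,.
\]
Such an $s$ exists precisely because of~\eqref{e.hs.thin}. By Theorem~\ref{t.hori_and_ass}, the closed set $E = G^c$ then has the property $P(s)$, and because $s \ge 1$ this is the version of $P(s)$ governed by the $\eta_2^{s-1}(\eta_2-\eta_1)$ bound, which is the one most convenient for Horiuchi's argument. Note that $|E|=0$ is automatic by Lemma~\ref{l.aikawa}(D), since the hypothesis forces $\udima(E)<n-1<n$.

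Next, I would invoke the Hardy--Sobolev inequality derived from the $P(s)$-property, which is the content of Section~\ref{s.horiuchi} (following Horiuchi~\cite{MR1021144}): for a closed set $E \subset \R^n$ with $|E|=0$ satisfying $P(s)$ and for exponents $1\le p\le q\le p^*$, $\beta\in\R$ satisfying the scaling compatibility $s > n - (q/p)(n-p+\beta)$, the global $(q,p,\beta)$-Hardy--Sobolev inequality
\[
\biggl(\int_{\R^n} \lvert f\rvert^q\, \delta_E^{(q/p)(n-p+\beta)-n}\,dx\biggr)^{1/q} \;\le\; C \biggl(\int_{\R^n} \lvert \nabla f\rvert^p\, \delta_E^{\beta}\,dx\biggr)^{1/p}
\]
holds for every $f \in C^\infty_0(\R^n)$. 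Our choice of $s$ is tuned exactly so that this compatibility is satisfied, and Theorem~\ref{t.hs.thin} follows by restricting to $f\in C_0^\infty(G)$.

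The substantive work is carried out in Section~\ref{s.horiuchi}, where the Hardy--Sobolev inequality is extracted from $P(s)$ by a Whitney-type decomposition of $G$, local Poincar\'e--Sobolev estimates on each Whitney piece, and summation controlled by $P(s)$. The main obstacle at this stage, and the reason the two quantities $(q/p)(n-p+\beta)$ and $n-1$ both appear in~\eqref{e.hs.thin}, is the \emph{simultaneous} parameter matching: one needs $s$ strictly less than $n - \udima(G^c)$ to obtain $P(s)$, strictly greater than $n - (q/p)(n-p+\beta)$ so that the weighted inequality is dimensionally consistent, and strictly greater than $1$ so that Horiuchi's argument applies in its standard range. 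The minimum in~\eqref{e.hs.thin} is precisely the threshold that opens up a non-empty window for such an $s$.
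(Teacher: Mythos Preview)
Your reduction is correct and coincides with the paper's: Theorem~\ref{t.hs.thin} is simply the case $\udima(E)<n-1$ of Theorem~\ref{t.hardy} applied with $E=G^c$, followed by restriction to $C_0^\infty(G)$. Your parameter window for $s$ is exactly the translation of the two Assouad bounds via Theorem~\ref{t.hori_and_ass}.

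Where your account goes astray is in the description of Section~\ref{s.horiuchi}. The proof there is \emph{not} a Whitney decomposition with local Poincar\'e--Sobolev estimates and summation. Instead, it proceeds by (i) a substitution $\hat f=|f|^{q/\hat q}$ and H\"older's inequality to reduce to $p=1$; (ii) the co-area formula and Minkowski's integral inequality to reduce further to a weighted isoperimetric-type estimate (Lemma~\ref{lem.Proposition_5.1}); and (iii) a Besicovitch covering by balls on which the weighted measure is halved, combined with the relative isoperimetric/Poincar\'e inequality of Lemma~\ref{lem.third}. Moreover, the two constraints you identify enter for different reasons than you suggest: the bound $\udima(E)<\tfrac{q}{p}(n-p+\beta)$ is used through the Aikawa condition and Lemma~\ref{l.equiv} in Lemma~\ref{lem.Proposition_5.1}, whereas the $P(s)$-property with $s>1$ is invoked only in the case $\beta>0$, $\diam(B)\ge\dist(B,E)$ of Lemma~\ref{lem.third}, where the factor $\eta_2^{s-1}(\eta_2-\eta_1)$ is genuinely needed for an absorption argument. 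So your ``scaling compatibility'' is not a condition on the $P(s)$-exponent itself; the two thresholds in~\eqref{e.hs.thin} feed into distinct parts of the argument.
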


Theorem~\ref{t.hs.thin} is a 
consequence of a more general result for functions $f\in C_0^\infty(\R^n)$
that we will establish in Theorem~\ref{t.hardy}.

\begin{rem}\label{r.impr.ha} 
In the case $q=p$, Theorem~\ref{t.hs.thin} in particular improves the sufficient condition
for the $(p,\beta)$-Hardy inequality from~\cite[Corollary~1.3]{lehrbackHardyAssouad}.
In both of these results it is assumed that $\udima(G^c) < n-p+\beta$, but
in~\cite[Corollary~1.3]{lehrbackHardyAssouad} the extra assumption is $\beta<p-1$
instead of $\udima(G^c) < n-1$ in Theorem~\ref{t.hs.thin},
and clearly $\udima(G^c) < n-p+\beta$ and $\beta<p-1$ together imply
that $\udima(G^c) < n-1$.
\end{rem}

Let us give an easy example which shows that the assumption 
$\udima(G^c) < n-1$ in Theorem~\ref{t.hs.thin} can not be removed:

\begin{example}
Let $G=\R^n\setminus \partial B(0,1)$, and consider functions
$f_j\in C_0^\infty(G)$ such that $f_j(x)=1$ when $|x|\le  1-2^{1-j}$,
$f_j(x)=0$ when $|x|\ge 1- 2^{-j}$, and $|\nabla f_j|\le C 2^j$
when $1-2^{1-j}<|x|<1-2^{-j}$. Then, for any $1 \le  p\le q\le np/(n-p)<\infty$
and $\beta\in\R$, the left-hand side of the $(q,p,\beta)$-Hardy--Sobolev 
inequality~\eqref{e.hardy-sobo}
is uniformly bounded away from zero 
for these functions $f_j$ if $j>1$ but, on the other hand,
when $\beta>p-1$ we have for the right hand side of~\eqref{e.hardy-sobo}
that 
\[
\int_G \lvert \nabla f_j\rvert^p\,\delta_{\partial G}^{\beta} \,dx
\lesssim \bigl|\{x\in\R^n\, :\, 1-2^{1-j}<|x|<1-2^{-j}\}\bigr| 2^{jp} 2^{-j\beta} 
\lesssim 2^{-j(\beta-p+1)}\xrightarrow{j\to\infty} 0.
\] 
Hence the  $(q,p,\beta)$-Hardy--Sobolev 
inequality fails in $G$ when $\beta>p-1=p - n + \udima(G^c)$,
even though in this case $\udima(G^c)<n-p+\beta\le\frac q p(n-p+\beta)$,
which is exactly the first bound given by~\eqref{e.hs.thin}.
\end{example}

See also~\cite[Sect.~6.3]{MR2442898}
for a  another example where the open set $G$ is connected.
The calculation is given there only for $p=q$, but the example works, just as above,
for all $1 \le  p\le q\le np/(n-p)<\infty$ and $\beta>p-1$. 
Nevertheless, some
positive results can also be given 
for the case $\udima(G^c)\ge n-1$,
see Theorem~\ref{t.hardy} for details, but these always require an
upper bound for $\beta$.

For the complements of $\lambda$-regular sets (for $0<\lambda<n-1$) the
`thick' and `thin' cases overlap when $p<q$, and hence
all Hardy--Sobolev inequalities hold in this case:

\begin{cor}\label{coro.regular}
 Let $1 < p < q\le np/(n-p)<\infty$ and
 $0<\lambda<n-1$, and assume that $E\subset\R^n$ is an unbounded
 $\lambda$-regular set. Then the open set $G=\R^n\setminus E$
 admits $(q,p,\beta)$-Hardy--Sobolev inequalities for every $\beta\in\R$. 
\end{cor}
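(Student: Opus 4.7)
The plan is to split into a ``thick case'' (small $\beta$) handled by Corollary~\ref{coro.hardy_ass} and a ``thin case'' (large $\beta$) handled by Theorem~\ref{t.hs.thin}, and then observe that the strict inequality $q>p$ forces the two ranges to overlap and cover all of $\R$. Since $E$ is $\lambda$-regular with $0<\lambda<n-1$, both Assouad dimensions of $E=G^c$ coincide with $\lambda$, i.e.\ $\ldima(G^c)=\udima(G^c)=\lambda$, and $G^c$ is unbounded by hypothesis.

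First I would handle the thick regime. Fix any $\beta$ satisfying $\beta<\lambda+p-n$. Then $\ldima(G^c)=\lambda>n-p+\beta$, and since $\lambda<n-1$ we automatically get $\beta<\lambda+p-n<p-1$, so the ``thick'' hypothesis of Corollary~\ref{coro.hardy_ass} is met (together with the unboundedness of $G^c$). Hence the $(q,p,\beta)$-Hardy--Sobolev inequality holds for all such $\beta$. Next, for the thin regime, fix any $\beta$ with $\beta>\tfrac{p}{q}\lambda+p-n$, i.e.\ $\tfrac{q}{p}(n-p+\beta)>\lambda$. Since also $\lambda<n-1$, we have
\[
\udima(G^c)=\lambda<\min\!\biggl\{\frac{q}{p}(n-p+\beta),\,n-1\biggr\},
\]
so Theorem~\ref{t.hs.thin} applies and yields the $(q,p,\beta)$-Hardy--Sobolev inequality.

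Finally, the two ranges $\beta<\lambda+p-n$ and $\beta>\tfrac{p}{q}\lambda+p-n$ together cover every $\beta\in\R$ precisely because $q>p$ gives $\tfrac{p}{q}\lambda<\lambda$, so $\tfrac{p}{q}\lambda+p-n<\lambda+p-n$ and the intervals overlap. I do not anticipate a serious obstacle here: the only point that needs care is that the strictness $q>p$ is exactly what produces the nontrivial overlap, which is why the result is stated with strict inequality (for $q=p$ the two regimes would touch only at the endpoint $\beta=\lambda+p-n$, and one would need an additional argument at that single value of $\beta$).
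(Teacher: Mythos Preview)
Your proposal is correct and follows essentially the same approach as the paper: split into the thick case $\beta<p-n+\lambda$ (Corollary~\ref{coro.hardy_ass}) and the thin case $\beta>p-n+\tfrac{p}{q}\lambda$ (Theorem~\ref{t.hs.thin}), and note that $q>p$ and $\lambda>0$ give $p-n+\tfrac{p}{q}\lambda<p-n+\lambda$, so the two ranges cover all of $\R$. Your added remarks that $\beta<p-1$ is automatic from $\lambda<n-1$ and that $G^c$ is unbounded are exactly the small checks needed to invoke Corollary~\ref{coro.hardy_ass}.
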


\begin{proof}
 For $\beta<p-n+\lambda$ this follows from the `thick' case of
 Corollary~\ref{coro.hardy_ass} 
(observe that $G^c=E$ is assumed to be unbounded), and for $\beta> p-n+\frac p q\lambda$
 from Theorem~\ref{t.hs.thin}. Since $p-n+\frac p q\lambda<p-n+\lambda$,
 these two cases cover all $\beta\in\R$.
\end{proof}

Note that Corollary \ref{coro.regular} is not true when $p=q$, since then the
$(p,p,p-n+\lambda)$-Hardy--Sobolev inequality fails for all $1<p<\infty$ 
by~\cite[Thm~1.1]{MR2442898}.
Contrary to Corollary~\ref{coro.regular}, as soon as the Hausdorff 
(or lower Minkowski) and (upper) Assouad dimensions of the
complement $G^c$ differ, some Hardy--Sobolev inequalities fail in
$G$ by the following Theorem~\ref{t.dichotomy},
which gives a `dichotomy' condition for 
the dimension of the complement $G^c$ when $G$ admits a
$(q,p,\beta)$-Hardy--Sobolev inequality. 
Local versions of these statements, as well
as the proof of Theorem~\ref{t.dichotomy},
are given in Section~\ref{s.nec}.

\begin{thm}\label{t.dichotomy}
Let $G\subset\R^n$ be an open set and
assume that $1\le p\le q<np/(n-p)<\infty$ and $\beta\in\R$ 
are such that $G$ admits a $(q,p,\beta)$-Hardy--Sobolev 
inequality~\eqref{e.hardy-sobo}. 
 
 If $\beta \ge 0$ and $q(n-p+\beta)/p\not=n$,  then either 
 \[\udima(G^c)< \frac{q}{p}(n-p+\beta)\quad \text{or}\quad 
\dimh(G^c)\ge n-p+\beta \,.\] 

 If $\beta<0$ and $G^c$
 is compact and porous, then either 
 \[\udima(G^c)< \frac{q}{p}(n-p+\beta)\quad \text{or}\quad 
\ldimm(G^c)\ge n-p+\beta\,.\] 
\end{thm}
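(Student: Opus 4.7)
The plan is to prove both cases by contradiction, constructing a sequence of test functions $f_k\in C_0^\infty(G)$ for which the right-hand side of the Hardy--Sobolev inequality remains bounded while the left-hand side tends to infinity. First I reduce to the case $\alpha:=\tfrac{q}{p}(n-p+\beta)-n<0$: if instead $\tfrac{q}{p}(n-p+\beta)\ge n$, the hypothesis $q(n-p+\beta)/p\ne n$ (when $\beta\ge 0$) or the porosity bound $\udima(G^c)<n$ (when $\beta<0$) already yields the first clause of the dichotomy.

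Fix $w\in G^c$ and $R_0>0$ to be chosen, and a cutoff $\phi\in C_0^\infty(B(w,R_0))$ with $\phi\equiv 1$ on $B(w,R_0/2)$ and $\lvert\nabla\phi\rvert\lesssim R_0^{-1}$. I build $\psi_k\in C^\infty(\R^n)$ with $0\le\psi_k\le 1$, $\psi_k\equiv 0$ in a neighborhood of $G^c$, $\psi_k\to 1$ a.e.\ on $G$, and with gradient controlled by a cover of $G^c$ constructed as follows. For $\beta\ge 0$, the negation of the second clause lets me pick $s\in(\dimh(G^c),n-p+\beta)$, and $\Ha^s(G^c)=0$ furnishes, for each $k$, a cover $\{B(x_i^{(k)},r_i^{(k)})\}$ of $G^c\cap B(w,2R_0)$ centered on $G^c$ with $\max_i r_i^{(k)}\to 0$ and $\sum_i (r_i^{(k)})^s\to 0$; a standard partition-of-unity gives $\lvert\nabla\psi_k\rvert\lesssim (r_i^{(k)})^{-1}$ on $2B_i^{(k)}$. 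For $\beta<0$, the negation of the second clause yields a sequence of scales $r_k\to 0$ with $N(G^c,r_k)r_k^{n-p+\beta}\to 0$, along which I use a single-scale cover at radius $r_k$. Setting $f_k:=\phi\psi_k\in C_0^\infty(G)$, the per-ball bound $\delta_{\partial G}\le 2r_i^{(k)}$ on $2B_i^{(k)}$ together with the partition-of-unity gradient estimate yields, for $\beta\ge 0$, $\int_G\lvert\nabla f_k\rvert^p\delta_{\partial G}^\beta\,dx\lesssim R_0^{n-p+\beta}+\sum_i (r_i^{(k)})^{n-p+\beta}$, and the sum is $\le(\max_i r_i^{(k)})^{n-p+\beta-s}\sum_i (r_i^{(k)})^s\to 0$. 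For $\beta<0$, the singular weight $\delta_{\partial G}^\beta$ is handled via Lemma~\ref{l.aikawa} and its compactness variant~(E), using the porosity, to obtain $\int_{2B_i^{(k)}}\delta_{\partial G}^\beta\,dx\lesssim r_k^{n+\beta}$ and hence total $\lesssim N(G^c,r_k)r_k^{n-p+\beta}\to 0$.

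The left-hand side is handled by Fatou's lemma: since $\lvert G^c\rvert=0$ (from porosity or from $\dimh(G^c)<n$), one has $f_k\to\phi$ a.e.\ on $G$, and thus $\liminf_k\int_G\lvert f_k\rvert^q\delta_{\partial G}^\alpha\,dx\ge\int_{B(w,R_0/2)}\delta_{\partial G}^\alpha\,dx$. The closing step exploits Lemma~\ref{l.aikawa}(A,C): if the first clause fails, i.e.\ $\udima(G^c)\ge n+\alpha$, then $n+\alpha\notin\mathcal{A}(G^c)$ (else the self-improvement would force $\udima(G^c)<n+\alpha$). Via a translation-and-dilation argument (renormalizing a sequence of Aikawa-failing balls to a unit ball and extracting a sub-sequential limit) one produces $w$ and $R_0$ for which $\int_{B(w,R_0/2)}\delta_{\partial G}^\alpha\,dx=\infty$. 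Combined with the bounded right-hand side, this contradicts the Hardy--Sobolev inequality and proves the dichotomy.

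The hard part will be the $\beta<0$ case: the per-ball bound via Aikawa tacitly requires $\udima(G^c)<n+\beta$, which is not automatic from porosity alone. In the residual regime $\udima(G^c)\ge n+\beta$ I expect one must replace the per-ball estimate by a layer-cake decomposition of $\int_{N_{r_k}(G^c)}\delta_{\partial G}^\beta\,dx$ into dyadic annuli $\{2^{-j-1}r_k\le\delta_{\partial G}<2^{-j}r_k\}$, coupled to the subsequential Minkowski bound $\lvert N_{r_k}(G^c)\rvert=o(r_k^{p-\beta})$ provided by $\ldimm(G^c)<n-p+\beta$. A further subtlety is the realization of an actually infinite local integral $\int_{B(w,R_0/2)}\delta_{\partial G}^\alpha\,dx$ from the mere failure of the Aikawa condition (which a priori only furnishes arbitrarily large but finite integrals); a compactness/rescaling argument seems necessary. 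Finally, the local versions of the statements announced in Section~\ref{s.nec} demand localizing the whole construction to arbitrarily small neighborhoods of points of $G^c$.
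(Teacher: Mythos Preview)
Your construction of the cut-off functions $\psi_k$ and the estimate for the right-hand side are essentially correct in the case $\beta\ge 0$ and are close to what the paper does. The genuine gap is in your closing step.

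You want to conclude by producing $w\in G^c$ and $R_0>0$ with $\int_{B(w,R_0/2)}\delta_{\partial G}^{\alpha}\,dx=\infty$, appealing to the failure of the Aikawa condition $n+\alpha\in\mathcal{A}(G^c)$. This inference is not valid. Failure of $n+\alpha\in\mathcal{A}(G^c)$ only means that the ratios $r^{-(n+\alpha)}\int_{B(x,r)}\delta_{G^c}^{\alpha}\,dy$ are unbounded over $x\in G^c$ and $0<r<\diam(G^c)$; it does not force any single integral to diverge. For instance, if $G^c$ is a disjoint union of well-separated pieces $F_k$ that are $\lambda_k$-regular with $\lambda_k\nearrow n+\alpha$, then $\udima(G^c)=n+\alpha$ yet $\int_B\delta_{G^c}^{\alpha}\,dx<\infty$ for every ball $B$. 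Your proposed ``translation-and-dilation'' fix cannot rescue this: the open set $G$ is fixed, the Hardy--Sobolev inequality is assumed only for that particular $G$, and rescaling $G$ produces a different domain with a different (a priori unrelated) constant. There is no compactness in the space of admissible domains that would let you pass to a limit.

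The paper sidesteps this entirely by reversing the logic. Rather than seeking one ball with infinite integral, it uses your very construction to show that the Hardy--Sobolev inequality extends from $C_0^\infty(G)$ to $C_0^\infty(G\cup B)$ for every ball $B$, with a constant independent of $B$. Testing this extended inequality on a standard bump adapted to $B(w,r)$, $w\in G^c$, then gives directly
\[
\int_{B(w,r)}\delta_{G^c}^{\alpha}\,dx\le C\,r^{n+\alpha}
\]
for all such $w,r$, i.e.\ $n+\alpha\in\mathcal{A}(G^c)$, and the self-improvement of Lemma~\ref{l.aikawa}(C) yields the strict bound $\udima(G^c)<n+\alpha$. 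In other words, your test-function construction is the right tool, but it should be used to \emph{verify} the Aikawa condition rather than to contradict its failure.

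For $\beta<0$ there is a second issue you already flag: your per-ball estimate $\int_{2B_i^{(k)}}\delta_{\partial G}^{\beta}\,dx\lesssim r_k^{n+\beta}$ needs $\udima(G^c)<n+\beta$, which porosity does not give. The layer-cake repair you sketch is unnecessary. With a single-scale cover at radius $r_k$, the cut-off $\psi_k=\min_i\{1,r_k^{-1}\delta_{2B_i^{(k)}}\}$ has $\nabla\psi_k$ supported in $\bigcup_i 3B_i^{(k)}\setminus\bigcup_i 2B_i^{(k)}$, and on that set one checks that $\delta_{G^c}\ge r_k$ (this is where equal radii matter). Hence $\delta_{\partial G}^{\beta}\le r_k^{\beta}$ pointwise on the support of $\nabla\psi_k$, and the estimate $\int|\nabla\psi_k|^p\delta_{\partial G}^{\beta}\,dx\lesssim N_k\,r_k^{n-p+\beta}\to 0$ follows without any Aikawa-type input.
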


In particular, Theorem~\ref{t.dichotomy} shows that the numbers
$\frac{q}{p}(n-p+\beta)$ and $n-p+\beta$, which bound the dimension of $G^c$ from
above or below in the sufficient conditions in Theorem~\ref{t.hs.thin}
and in the thick case of Corollary~\ref{coro.hardy_ass}, respectively, are sharp, although the notions
of dimension are not  the same
in the lower bounds. 
Recall, however, that when $E\subset\R^n$ is a closed set, 
then $\ldima(E)\le \dimh(E\cap B)$ for all balls $B$ centered at $E$, 
see~\cite[Lemma 2.2]{KLV}.
Moreover, for many sets
it holds that
$\ldima(E)=\dimh(E)$ 
(and even $\ldima(E)=\ldimm(E)$),
so in this sense the sufficient condition
$\ldima(G^c) > n-p+\beta$ in
Corollary~\ref{coro.hardy_ass} is not that far from being
optimal.

Let us also illustrate the applicability and sharpness of our results 
with the following example:

\begin{example}\label{ex.basic}
Consider a closed unbounded set
$E\subset\R^n$ with
\begin{equation}\label{e.ex_main}
0\le\ldima(E)=\dimh(E)=\lambda_1<\lambda_2=\udima(E)<n-1.
\end{equation} 
Then for $\beta\ge 0$
and  $1< p \le q<np/(n-p)<\infty$, with
$q(n-p+\beta)/p\not=n$,
the open set $G=\R^n\setminus E$ admits 
a $(q,p,\beta)$-Hardy--Sobolev inequality if
\[  n-p+\beta < \lambda_1
\qquad \text{or}\qquad 
\lambda_2<\frac{q}{p}(n-p+\beta) \,,\]
while if 
\[\lambda_1 < n-p+\beta \le \frac{q}{p}(n-p+\beta) \le \lambda_2\,, \]
then the $(q,p,\beta)$-Hardy--Sobolev inequality fails in $G$; notice that the
latter inequalities always hold for some parameters $q,p,\beta$ when
$0\le \lambda_1 < \lambda_2\le n$. Moreover, sets satisfying
the dimensional bounds in~\eqref{e.ex_main} do exist. An easy example would
be a closed, unbounded, non-porous, and countable set $E\subset \R^{m}$,
$m\le n-2$, which is
embedded to $\R^n$; then it is well-known that 
$\ldima(E)=\dimh(E)=0$ and $\udima(E)=m<n-1$. 
A concrete example of such a set, for $m=1$ and $n\ge 3$, 
is obtained by embedding a copy
of the set $E_0=\{1/j : j\in\N\}\cup\{0\}\subset [0,1]$ to each unit interval
$[k,k+1]$, $k\in\Z$, in the $x_1$ axis of $\R^n$.  
\end{example}

Notice that we do not know in Example~\ref{ex.basic} what
happens when $n-p+\beta = \lambda_1$. However, in the case $p=q$ it is known
that the $(p,\beta)$-Hardy inequality does not hold when $n-p+\beta = \dimh(G^c)$,
i.e., the lower bound in terms of the Hausdorff dimension is strict
in Theorem~\ref{t.dichotomy} for $p=q$,
see~\cite[Thm~1.1]{MR2442898}.
The proof of the strict inequality 
is heavily based on the known 
self-improvement of Hardy inequalities (cf.\ e.g.~\cite{MR1948106}). 
We do not know if such self-improvement 
holds in general for Hardy--Sobolev inequalities, 
but such a property would certainly
yield strict inequalities to the lower bounds of Theorem~\ref{t.dichotomy}
for all $p<q< p^*$, as well.

Nevertheless, let us also point out that the strict inequality
plays a much bigger role in the case $p=q$, since for instance when
the complement of $G$ is $\lambda$-regular, 
the strict inequality shows that the $(p,p-n+\lambda)$-Hardy inequality does not hold in $G$;
compare this to Corollary~\ref{coro.regular}.

\section{Sufficient conditions for global inequalities}\label{s.horiuchi}

For the next two sections, we change the perspective a little bit and
consider the validity of the global Hardy--Sobolev inequalities
for all functions $f\in C_0^\infty(\R^n)$ when the distance
is taken to a closed set $E\subset\R^n$ with $\lvert E\rvert =0$. 
In this section we establish the following sufficient condition
for such inequalities. Note that
Theorem~\ref{t.hs.thin} is an immediate consequence of 
the case $\udima(E)<n-1$ of  this more general result. 

\begin{thm}\label{t.hardy}
Let $E\not=\emptyset$ be a closed porous set in $\R^n$, and
let 
$1\le p\le q\le np/(n-p)<\infty$
and $\beta\in\R$ be such that
\[
\udima(E) < \frac{q}{p}(n-p+\beta)\,. 
\]
In addition, assume that either $\udima(E) < n-1$ or that
\begin{equation}\label{eq.extra}
\beta\le\frac{(p-1)(qp+np-nq)}{qp+p-q}\,.
\end{equation}
Then, there is a constant $C>0$ such that inequality 
\begin{equation}\label{e.weighted}
\biggl(\int_{\R^n} \lvert f(x)\rvert^q \, \delta_E(x)^{(q/p)(n-p+\beta)-n}\,dx\biggr)^{1/q}
\le C\biggl(\int_{\R^n} \lvert \nabla f(x)\rvert^p\,\delta_E(x)^\beta\,dx\biggr)^{1/p}
\end{equation}
holds for every $f\in C^\infty_0(\R^n)$.
\end{thm}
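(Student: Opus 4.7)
The plan is to derive Theorem~\ref{t.hardy} by combining the $P(s)$-characterization of the Assouad dimension from Theorem~\ref{t.hori_and_ass} with a slicing/annular argument of the type used by Horiuchi in~\cite{MR1021144}, adapted to the present setting. Since the porosity of $E$ together with Lemma~\ref{l.aikawa}(D) already gives $\udima(E)<n$, and by hypothesis $\udima(E)<(q/p)(n-p+\beta)$, I can pick $s\in(0,n-\udima(E))$ satisfying $n-s<(q/p)(n-p+\beta)$. Theorem~\ref{t.hori_and_ass} then guarantees that $E$ has the property $P(s)$. Under the stronger hypothesis $\udima(E)<n-1$ I may arrange $s\ge 1$; otherwise $s<1$ is forced on me, and this is precisely the regime in which the extra upper bound~\eqref{eq.extra} on $\beta$ will be needed to close the estimates.

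Next, for $f\in C_0^\infty(\R^n)$ I decompose the left-hand side of~\eqref{e.weighted} across the dyadic tubes
\[
A_j = \{x\in\R^n : 2^{-j-1}<\delta_E(x)\le 2^{-j}\},\qquad j\in\Z,
\]
on which $\delta_E\simeq 2^{-j}$. This lets me freeze the weight at a single scale on each $A_j$. I then use $P(s)$ in the form of Definition~\ref{d.horiuchi} with $\eta_1=2^{-j-1}$ and $\eta_2=2^{-j}$ to control $|B\cap A_j|$ for every ball $B$; this volume bound is exactly what is needed to transfer an unweighted local Sobolev--Poincar\'e inequality, applied at scale $2^{-j}$, to the weighted integral over the tube. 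Summing the resulting one-scale estimates in $j$ produces the global inequality~\eqref{e.weighted}. Throughout, Lemma~\ref{l.equiv} provides the pointwise estimates for integrals of powers of $\delta_E$ that justify replacing $\delta_E$ by $2^{-j}$ on $A_j$ up to uniform constants.

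The main obstacle is arithmetic rather than structural: one has to verify that the exponent $s$ coming from $P(s)$ combines correctly with $p$, $q$, and $\beta$ so that the geometric sums over $j$ actually converge to the desired gradient-side quantity. When $s\ge 1$ the $P(s)$-bound is linear in $\eta_2-\eta_1$, which aligns naturally with the Sobolev scaling on the gradient side, so the summation closes for every admissible $\beta$. When $0<s<1$ the $P(s)$-bound degrades to $(\eta_2-\eta_1)^s$, and demanding summability in $j$ yields exactly the algebraic inequality~\eqref{eq.extra}. The technical heart of the proof is therefore the careful accounting in the two $P(s)$-regimes; the conceptual input is simply that Theorem~\ref{t.hori_and_ass} identifies the class of sets to which Horiuchi's machinery applies as precisely the porous sets with a suitable bound on $\udima(E)$.
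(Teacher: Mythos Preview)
Your outline does not match the paper's proof, and as written it has two genuine gaps.

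First, the annular decomposition plus an ``unweighted local Sobolev--Poincar\'e inequality at scale $2^{-j}$'' controls only the oscillation of $f$ over balls of radius $\sim 2^{-j}$ covering $A_j$, not the full $L^q$-norm of $f$ on $A_j$. You never say how you deal with the means $f_{B_i^j}$; without a chaining argument that links the shells to the region where $f$ vanishes (at infinity), you cannot pass from $\bigl(\int_{B_i^j}|f-f_{B_i^j}|^q\bigr)^{1/q}$ to $\bigl(\int_{A_j}|f|^q\bigr)^{1/q}$. This is exactly the step where the paper's approach diverges: instead of decomposing in distance, the paper uses the coarea formula over the level sets $\{f>t\}$, reduces to $p=1$ via the substitution $\hat f=|f|^{q/\hat q}$, $\hat q = (1-1/p+1/q)^{-1}$, $\hat\beta = q(n-p+\beta)/(\hat q p)-n+1$, and then proves a weighted isoperimetric-type inequality (Lemma~\ref{lem.Proposition_5.1}, via Lemma~\ref{lem.third}) for admissible sets $M$. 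The means never appear because one works directly with $\int_M \delta_E^{\hat q(n-1+\hat\beta)-n}\,dx$ versus a surface integral on $\partial M$.

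Second, your explanation of where~\eqref{eq.extra} comes from cannot be correct. With your choice $\eta_1=2^{-j-1}$, $\eta_2=2^{-j}$ one has $\eta_2-\eta_1=\eta_2/2$, so both branches of Definition~\ref{d.horiuchi} yield the \emph{same} bound $|B\cap A_j|\lesssim 2^{-js}\diam(B)^{n-s}$ up to a constant; there is no degradation in the $0<s<1$ regime at the dyadic scales you use, and hence no summability constraint distinguishing the two cases. In the paper, \eqref{eq.extra} has a completely different origin: after the reduction to $p=1$ one needs $\hat\beta\le 0$ in Lemma~\ref{lem.Proposition_5.1} whenever $\udima(E)\ge n-1$, and the algebraic condition $\hat\beta\le 0$ is precisely~\eqref{eq.extra}. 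The genuine use of $P(s)$ with $\eta_1$ close to $\eta_2$ (and thus the need for $s>1$) occurs only inside the proof of Lemma~\ref{lem.third} in the case $\hat\beta>0$, $\diam(B)\ge\dist(B,E)$, where one must estimate $\mathcal H^{n-1}(\partial E_\eta\cap B)$; this is not visible from a dyadic shell decomposition of $\R^n$.
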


The proof of Theorem~\ref{t.hardy} follows from the results of 
Horiuchi~\cite{MR1021144} and
our characterization of the (upper) Assouad dimension, Theorem~\ref{t.hori_and_ass}.
Nevertheless, since the proof of inequality~\eqref{e.weighted} is somewhat implicit 
in~\cite{MR1021144}, where the interest is mainly in the corresponding
non-homogeneous inequalities, we have chosen to include here a detailed proof. 
This also allows us to clarify some rather subtle points in
the proof and to distinguish during the proof
when the full power of the $P(s)$-property is needed and when the 
claims follow more directly from the dimensional bound
$\udima(E) < \frac{q}{p}(n-p+\beta)$. 

We first reduce Theorem~\ref{t.hardy} 
to the following lemma, which is
essentially \cite[Proposition~5.1]{MR1021144}, corresponding to the
case $p=1$. The proof of Lemma~\ref{lem.Proposition_5.1} is
given at the end of this section after 
we have established an important technical tool
in Lemma~\ref{lem.third}. 
Throughout this section we say that a set $M$ in $\R^n$ is \emph{admissible} if $M$ is 
both open and bounded
and $\partial M$ is a closed $(n-1)$-dimensional manifold of class $C^\infty$.

\begin{lem}\label{lem.Proposition_5.1}
Let $E\not=\emptyset$ be a closed porous set in $\R^n$, and
let 
$1\le q\le n/(n-1)$
and $\beta\in\R$ be such that
$
\udima(E) < q(n-1+\beta)\,. 
$
If $n-1\le \udima(E) < n$, then we assume in addition that $\beta\le 0$.
Then there exists a
positive constant $C_1$ 
such that
\[
\bigg(\int_M \delta_E(x)^{q(n-1+\beta)-n}\, dx\bigg)^{1/q}\le C_1 
\int_{\partial M} \delta_E(x)^\beta d\mathcal{H}^{n-1}(x)\,,
\]
whenever $M\subset \R^n$ is an admissible set.
\end{lem}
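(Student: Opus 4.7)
The plan is to prove this as a weighted isoperimetric inequality by a layer-cake decomposition with respect to the level sets of $\delta_E$, combined with the classical unweighted isoperimetric inequality applied to truncations of $M$ and the dimensional control of $E$ supplied by Theorem~\ref{t.hori_and_ass}. Concretely, I would introduce the distribution functions
\[
\Phi(\eta)=|M\cap\{\delta_E>\eta\}|\qquad\text{and}\qquad \Psi(\eta)=\mathcal{H}^{n-1}(\partial M\cap\{\delta_E>\eta\}),
\]
and rewrite both sides of the target inequality via layer-cake identities: the left-hand side as an integral of $\Phi$ against $\eta^{t-1}\,d\eta$ with $t=q(n-1+\beta)-n$, and the right-hand side as an integral of $\Psi$ against $\eta^{\beta-1}\,d\eta$ (with appropriate sign conventions and constant boundary contributions involving $|M|$ and $\mathcal{H}^{n-1}(\partial M)$ depending on whether $t$ and $\beta$ are positive, negative, or zero). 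Because $\delta_E$ is $1$-Lipschitz, the coarea formula identifies $-\Phi'(\eta)=\mathcal{H}^{n-1}(M\cap\partial E_\eta)$ for almost every $\eta>0$.

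For a.e.\ $\eta$ the truncated set $M\cap\{\delta_E>\eta\}$ has finite perimeter, with boundary splitting as $(\partial M\cap\{\delta_E>\eta\})\cup(M\cap\partial E_\eta)$, and the classical isoperimetric inequality in $\R^n$ yields the differential inequality
\[
\Phi(\eta)^{(n-1)/n}\le C_n\bigl[\Psi(\eta)+(-\Phi'(\eta))\bigr]\qquad\text{for a.e.\ }\eta>0.
\]
The dimensional assumption $\udima(E)<q(n-1+\beta)$ enters through Theorem~\ref{t.hori_and_ass}, which supplies the $P(s)$-property for every $s<n-\udima(E)$. Combined with the coarea identity $\int_{\eta_1}^{\eta_2}(-\Phi'(\eta))\,d\eta=|M\cap(E_{\eta_2}\setminus E_{\eta_1})|$, this produces the layer-wise bound
\[
|M\cap(E_{\eta_2}\setminus E_{\eta_1})|\lesssim \eta_2^{s-1}(\eta_2-\eta_1)\diam(M)^{n-s}\qquad(1\le s<n-\udima(E)),
\]
with the analogous estimate when $0<s<1$, and this is precisely what controls the behaviour of $-\Phi'$ near $\eta=0$.

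Finally I would combine the isoperimetric differential inequality with the layer-cake form of the LHS by multiplying through by $\eta^{t-1}$, integrating in $\eta$, and using Hölder's inequality to pass from the exponent $(n-1)/n$ to the exponent $1$, together with an integration by parts that converts the $(-\Phi')$-integral into a $\Phi$-integral. The resulting $\Psi$-contribution reconstructs $\int_{\partial M}\delta_E^\beta\,d\mathcal{H}^{n-1}$ via a reverse layer-cake, while the $(-\Phi')$-contribution is absorbed into the LHS exactly because of the dimensional condition $\udima(E)<q(n-1+\beta)$, which makes the boundary terms at $\eta=0$ vanish and keeps the integrals convergent. The restriction $1\le q\le n/(n-1)$ is what permits the exponent $(n-1)/n$ of the isoperimetric inequality to combine consistently with $q$; when $\udima(E)<n-1$ the $P(s)$-property is available for some $s>1$, giving enough freedom to allow any $\beta\in\R$, whereas in the borderline regime $n-1\le\udima(E)<n$ only $s<1$ is permissible and the extra hypothesis $\beta\le 0$ is what prevents divergence of the weighted integrals. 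The main obstacle is this tight bookkeeping of exponents---making the absorption of $(-\Phi')$ legitimate under the stated hypotheses and treating the two $P(s)$-regimes ($s\ge 1$ versus $0<s<1$) in a unified way---which is precisely the technical content of the adaptation of Horiuchi's argument to the Assouad-dimensional setting.
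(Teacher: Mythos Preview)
Your route is genuinely different from the paper's. The paper does \emph{not} slice $M$ by level sets of $\delta_E$; instead it localises. For each $x\in M$ it selects (by continuity) a radius $r(x)$ with the half-mass property $\int_{M\cap B(x,r(x))}\delta_E^\beta=\tfrac12\int_{B(x,r(x))}\delta_E^\beta$, extracts a Besicovitch subfamily $\{B_j\}$, and on each $B_j$ invokes two ingredients: Lemma~\ref{l.equiv}, which converts $\bigl(\int_{B_j}\delta_E^{q(n-1+\beta)-n}\bigr)^{1/q}$ into $\diam(B_j)^{n-1}(\diam(B_j)+\dist(B_j,E))^\beta\simeq \diam(B_j)^{-1}\int_{B_j}\delta_E^\beta$ (this is where $q\le n/(n-1)$ enters, via $n-1-n/q\le 0$); and Lemma~\ref{lem.third}, a \emph{linear} local inequality $\int_{M\cap B_j}\delta_E^\beta\le C\diam(B_j)\int_{\partial M\cap B_j}\delta_E^\beta\,d\mathcal{H}^{n-1}$ valid precisely under the half-mass condition. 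The $P(s)$-property and the isoperimetric inequality are used inside the proof of Lemma~\ref{lem.third} (and only in the sub-case $\beta>0$, $\diam(B)\ge\dist(B,E)$); for $\beta\le 0$ that lemma uses a weighted Poincar\'e inequality instead. Summing over $j$ with bounded overlap finishes the proof.

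Your global scheme has a concrete gap at the ``closing'' step. After multiplying $\Phi(\eta)^{(n-1)/n}\le C_n[\Psi(\eta)+(-\Phi'(\eta))]$ by $\eta^{t-1}$ and integrating, the $\Psi$-term carries the weight $\eta^{t-1}$, whereas the layer-cake for $\int_{\partial M}\delta_E^\beta\,d\mathcal{H}^{n-1}$ requires the weight $\eta^{\beta-1}$; since $t=q(n-1+\beta)-n\neq\beta$ except in degenerate cases, these do not match, and no H\"older step fixes this without introducing a further uncontrolled factor. Likewise, integrating $(-\Phi')$ by parts yields $(t-1)\int\eta^{t-2}\Phi\,d\eta$, which has a different $\eta$-weight than the left-hand layer-cake $\int\eta^{t-1}\Phi\,d\eta$, so the claimed ``absorption into the LHS'' is not substantiated. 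In short, a single global application of the isoperimetric inequality to $M\cap\{\delta_E>\eta\}$ does not carry the scale information needed to reconcile the two distance powers $t$ and $\beta$; the paper's covering by half-mass balls is exactly the device that supplies the missing scale factor $\diam(B_j)$ and turns the relative isoperimetric bound into a linear one that can be summed.
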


Also the following result, 
which shows that
the Assouad dimension of $E$ is closely related to the
Muckenhoupt $A_1$-properties of the powers of the distance function,
will be needed in the 
proof of Theorem \ref{t.hardy}.

\begin{lem}\label{l.A_1}
Let $E\not=\emptyset$ be a closed set in $\R^n$ and let $\omega = \delta_E^{s-n}$,
where $\udima(E) < s\le n$. Then $\omega$ is a Muckenhoupt $A_1$-weight, i.e.,
there is a constant $c\ge 1$ such that inequality
\[
\vint_B \omega(x) \,dx \le c\,\mathop{\mathrm{ess\,inf}}_{x\in B}\,\omega(x)
\]
holds whenever $B$ is any ball in $\R^n$.  
\end{lem}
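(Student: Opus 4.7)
The plan is to verify the $A_1$ condition directly on an arbitrary ball $B=B(x_0,r)$ by comparing the average of $\omega$ with its essential infimum via a two-case analysis keyed on $d:=\dist(B,E)$. First, observe that the case $s=n$ is trivial: the hypothesis $\udima(E)<n$ forces $|E|=0$ by Lemma~\ref{l.aikawa}(D), so $\omega\equiv 1$ almost everywhere. Hence I may assume $s<n$, in which case $t\mapsto t^{s-n}$ is decreasing, and consequently
\[
\mathop{\mathrm{ess\,inf}}_{x\in B}\,\omega(x)=\Bigl(\sup_{x\in B}\delta_E(x)\Bigr)^{s-n}.
\]

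In the \emph{far case} $d\ge r$, the triangle inequality gives $d\le \delta_E(x)\le d+2r\le 3d$ for every $x\in B$, so $\omega$ varies by at most a fixed dimensional factor on $B$ and the $A_1$ inequality follows at once. In the \emph{near case} $d<r$, I would pick $y\in E$ with $|y-x_0|<2r$, so that $B\subset B(y,3r)$. Then $\delta_E(x)\le |x-y|\le 3r$ for all $x\in B$, which immediately yields the lower bound $\mathop{\mathrm{ess\,inf}}_{x\in B}\omega\ge (3r)^{s-n}$ on the essential infimum.

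To complement this, I would bound the average from above by invoking the Aikawa condition at $y\in E$. Since $\udima(E)<s$, parts (A) and (B) of Lemma~\ref{l.aikawa} yield $s\in\mathcal{A}(E)$, so there is a constant $C$ with $\int_{B(y,R)}\delta_E(z)^{s-n}\,dz\le C R^s$ for all $y\in E$ and $0<R<\diam(E)$. When $E$ is unbounded, this already covers every radius $R>0$; when $E$ is bounded and hence compact (as a closed bounded subset of $\R^n$), Lemma~\ref{l.aikawa}(E) extends the estimate to all $R>0$. Applying this with $R=3r$ produces $\int_B\omega\le C(3r)^s$, and dividing by $|B|\simeq r^n$ gives $\vint_B\omega\lesssim r^{s-n}\simeq (3r)^{s-n}\le \mathop{\mathrm{ess\,inf}}_B\omega$. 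Combining the two cases completes the verification of the $A_1$ inequality.

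The substance of the argument is just a scaling comparison plus the Aikawa estimate; the only mildly delicate point is the extension of the Aikawa bound to radii exceeding $\diam(E)$ in the bounded case, and this is precisely what Lemma~\ref{l.aikawa}(E) provides. I do not anticipate any serious technical obstacle.
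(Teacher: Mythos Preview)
Your argument is correct and is precisely the kind of direct verification the paper has in mind: the paper omits the proof entirely, noting only that it ``is not difficult to see from Lemma~\ref{l.aikawa}'' and citing \cite[Lemma~2.2]{MR1118940}, which is exactly the Aikawa-condition route you take. Your two-case split (far ball versus near ball) together with the extension of the Aikawa estimate to all radii via Lemma~\ref{l.aikawa}(E) in the compact case fills in the ``straightforward but somewhat tedious details'' the authors chose to skip.
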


This result is not difficult to see from Lemma~\ref{l.aikawa} 
(see also \cite[Lemma 2.2]{MR1118940}),
and hence we omit the straightforward but somewhat tedious details.
For more
information on Muckenhoupt weights, we refer to \cite[Chapter IV]{MR807149}.

\begin{proof}[Proof of Theorem~\ref{t.hardy}]
Let us first consider the case $p=1$.
Fix a non-negative function $f\in C^\infty_0(\R^n)$.
By Sard's theorem \cite{MR0007523}, and the implicit function theorem,
the following two statements hold 
simultaneously for almost every $t>0$.
First, the  boundary of the 
compact
set 
\[ 
M_t=\{x\in \R^n\,:\,  f(x) >   t\}
\]
coincides with the level set $\{x\in \R^n\,:\,  f(x)=t\}$ and, second, this
level set is a compact $(n-1)$-dimensional 
manifold of class $C^\infty$.

By  Minkowski's integral inequality \cite[p. 271]{MR0290095}, 
the co-area formula (see e.g.~\cite[Theorem 3.2.12]{MR0257325} and Lemma~\ref{lem.Proposition_5.1},
we obtain
\begin{align*}
\bigg(\int_{\R^n} f(x)^{q}&\,\delta_E(x)^{q(n-1+\beta)-n}\,dx\bigg)^{1/q}
  \le \int_{0}^\infty \bigg(\int_{M_t}\delta_E(x)^{q(n-1+\beta)-n}\,dx\bigg)^{1/q}\,dt\\
& \le C_1  \int_0^\infty \int_{\partial M_t} \delta_E(x)^\beta d\mathcal{H}^{n-1}(x)\,dt
= C_1\int_{\R^n}  \lvert \nabla f(x)\rvert \delta_E(x)^\beta\,dx\,.
\end{align*}
(Note here that since $\udima(E) < q(n-1+\beta)\le n+\beta q$, both
$\delta_E^{q(n-1+\beta)-n}$ and $\delta_E^\beta$ are locally integrable
by Lemma~\ref{l.aikawa}(B).)
An approximation argument via mollification of $\lvert f\rvert$
concludes the proof of Theorem~\ref{t.hardy} when $p=1$;
the dominated convergence theorem (in the case of distance weights
with positive powers) or Lemma \ref{l.A_1}
and \cite[Theorem 2.1.4]{MR1774162}
(in case of negative powers) can be used to make the argument rigorous.

Let then $q,p$ and $\beta$ be as in the statement of Theorem~\ref{t.hardy}.
We assign 
\[\hat q = \frac{1}{1-1/p + 1/q}\,,\qquad \hat\beta = \frac{q(n-p+\beta)}{\hat q p} -n+1\,,
\qquad  \text{ and } \quad \hat f = \lvert f\rvert^{q/\hat q}\,.\] 
It is straightforward to verify that then $\hat q$ and $\hat\beta$ satisfy 
the  assumptions 
of the case $p=1$ of the
theorem (in particular, if $\beta$ satisfies the
additional bound~\eqref{eq.extra}, then $\hat\beta\le 0$).
Hence we obtain, using the case $p=1$ for $\hat f$ with 
 exponents  $\hat q$ and $\hat \beta$, that
\begin{equation}\label{e.reduce1}
\begin{split}
&\bigg( \int_{\R^n} \lvert f(x)\rvert^q \,\delta_E(x)^{(q/p)(n-p+\beta)-n}\,dx\bigg)^{1-1/p+1/q}
\\&= \bigg( \int_{\R^n} \lvert \hat f(x)\rvert^{\hat q} \,\delta_E(x)^{\hat q(n-1+\hat \beta)-n}\,dx\bigg)^{1/\hat q}
 \lesssim \int_{\R^n} \lvert \nabla \hat f(x) \rvert\,\delta_E(x)^{\hat \beta}\,dx\,;
\end{split}
\end{equation}
the standard mollification of $\hat f$ can be 
 used to justify  the last step
(see above). 
Since 
\[
\lvert \nabla \hat f(x) \rvert  = \lvert \nabla \lvert f\rvert^{q/\hat q}(x) \rvert
\le C(p,q) \lvert f(x)\rvert^{q(p-1)/p} \lvert \nabla f(x)\rvert
\]
almost everywhere, we can dominate the last integral in~\eqref{e.reduce1} 
with the help of H\"older's inequality by
\begin{equation}\label{e.reduce2}
\begin{split}
C(p,q) &\int_{\R^n} \lvert \nabla f(x)\rvert  \lvert f(x)\rvert^{q(p-1)/p}\,\delta_E(x)^{\hat\beta}\,dx\\
&\lesssim \bigg(\int_{\R^n} \lvert \nabla f(x)\rvert^{p}\,\delta_E(x)^{\beta}\,dx\bigg)^{1/p}
\bigg(\int_{\R^n} \lvert f(x)\rvert^{q}\,\delta_E(x)^{(q/p)(n-p+\beta)-n}\,dx\bigg)^{1-1/p}\,.
\end{split}
\end{equation}
The $(q,p,\beta)$-Hardy--Sobolev inequality 
 now  follows from 
estimates~\eqref{e.reduce1} and~\eqref{e.reduce2}.
\end{proof}

The proof of Lemma~\ref{lem.Proposition_5.1} is in turn based on the following
result (cf.~\cite[Proposition~6.3]{MR1021144}),
which utilizes a weighted Poincar\'e inequality in the case $\beta\le 0$
and a relative isoperimetric inequality when $\beta>0$.

\begin{lem}\label{lem.third}
Let $E\not=\emptyset$ be a closed porous set in $\R^n$, and
let $\beta\in\R$ be such that
$
\udima(E) < n+\beta\,; 
$
if $n-1\le \udima(E) < n$, then we assume in addition that $\beta\le 0$.
If
$M$ is an admissible set in $\R^n$ and $B$ is a ball in $\R^n$ such that
\begin{equation}\label{e.halfball}
\int_{M\cap B} \delta_E(x)^\beta \,dx = \tfrac 12 \int_B \delta_E(x)^\beta\,dx\,,
\end{equation}
then
\[
\int_{M\cap B} \delta_E(x)^\beta \,dx \le C \diam(B)\int_{\partial M\cap B} \delta_E(x)^\beta\,d\mathcal{H}^{n-1}(x)\,.
\]
Here the constant $C>0$ is independent of $M$ and $B$.
\end{lem}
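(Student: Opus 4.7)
The plan is to handle the two cases $\beta\le 0$ and $\beta>0$ separately. In the first case, the weight $\delta_E^\beta$ is a Muckenhoupt $A_1$-weight and the lemma follows from the weighted $(1,1)$-Poincar\'e inequality applied to $u=\chi_M$; in the second, $\delta_E^\beta$ is no longer $A_1$, and one must combine the classical (unweighted) relative isoperimetric inequality with the strict dimensional gap $\udima(E)<n-1$ to control the weighted boundary integral. Note that $|E|=0$ by Lemma~\ref{l.aikawa}(D), since $\udima(E)<n$, so the weight is defined a.e.

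Case $\beta\le 0$. With $s:=n+\beta\in(0,n]$ we have $\udima(E)<s$, so Lemma~\ref{l.A_1} gives that $\omega:=\delta_E^\beta$ is an $A_1$-weight. The standard weighted $(1,1)$-Poincar\'e inequality applied to $u=\chi_M\in BV(B)$ reads
\[
\int_B |u-u_{B,\omega}|\,\omega\,dx \le C\diam(B)\int_B \omega\,d|Du|,
\]
where $u_{B,\omega}=\omega(B)^{-1}\int_B u\,\omega\,dx$, and $|Du|$ equals $\mathcal{H}^{n-1}$ restricted to $\partial M$ by admissibility of $M$. The balance condition~\eqref{e.halfball} is precisely $u_{B,\omega}=1/2$, so $|u-u_{B,\omega}|\equiv 1/2$ on $B$. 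The left-hand side therefore equals $\tfrac12\omega(B)=\omega(M\cap B)$ and the right-hand side equals $C\diam(B)\int_{\partial M\cap B}\delta_E^\beta\,d\mathcal{H}^{n-1}$, which is the desired bound.

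Case $\beta>0$. The extra hypothesis forces $\udima(E)<n-1$, so $E$ is thin. Now $\omega=\delta_E^\beta$ vanishes on $E$ and is not $A_1$, so I proceed via the classical (unweighted) relative isoperimetric inequality. Lemma~\ref{l.equiv}, applied with $s=n+\beta>\udima(E)$, yields
\[
\int_B \delta_E^\beta\,dx \simeq \diam(B)^n\bigl(\diam(B)+\dist(B,E)\bigr)^\beta,
\]
while the pointwise bound $\delta_E(x)\le\dist(B,E)+\diam(B)$ for $x\in B$ gives $\omega\lesssim(\diam(B)+\dist(B,E))^\beta$ on $B$. Combining these with~\eqref{e.halfball} forces the \emph{unweighted} balance $|M\cap B|\simeq|B|\simeq|B\setminus M|$, so the classical relative isoperimetric inequality produces $|B|\lesssim\diam(B)\,\mathcal{H}^{n-1}(\partial M\cap B)$. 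It remains to upgrade this to the weighted statement, i.e.\ to prove
\[
\int_{\partial M\cap B}\delta_E^\beta\,d\mathcal{H}^{n-1} \gtrsim \diam(B)^{n-1}\bigl(\diam(B)+\dist(B,E)\bigr)^\beta.
\]
I would do this by controlling $\mathcal{H}^{n-1}\bigl(\partial M\cap B\cap \{\delta_E<t\}\bigr)$ for small $t$ via the $P(s)$-property (Theorem~\ref{t.hori_and_ass}) with $s\in(\udima(E),n-1)$, combined with the smoothness of $\partial M$: the thin layers around $E$ meet the hypersurface $\partial M$ only in sets of controllably small $(n-1)$-dimensional measure, so a definite portion of $\partial M\cap B$ lies where $\delta_E^\beta\simeq(\diam(B)+\dist(B,E))^\beta$.

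The main obstacle is this last step of the $\beta>0$ case. While the unweighted isoperimetric inequality immediately delivers a lower bound on $\mathcal{H}^{n-1}(\partial M\cap B)$, one must still rule out that the boundary $\partial M\cap B$ is hiding inside the thin $\delta_E$-neighborhoods of $E$; ensuring this requires the strict gap $\udima(E)<n-1$, so that the trace of those neighborhoods on the smooth hypersurface $\partial M$ has small $(n-1)$-dimensional measure. This is the technical content absent from the $A_1$ case $\beta\le 0$, where everything reduces to a standard weighted Poincar\'e inequality.
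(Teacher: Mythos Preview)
Your treatment of the case $\beta\le 0$ is essentially the paper's argument: both reduce to the fact that $\delta_E^\beta$ is an $A_1$-weight and to a weighted $(1,1)$-Poincar\'e inequality, followed by a passage to $\chi_M$. (The paper uses the \emph{unweighted} mean $u_B$ together with the auxiliary fact $\lvert M\cap B\rvert\le(1-\kappa)\lvert B\rvert$, which it derives from the $A_1$-property; your use of the weighted mean $u_{B,\omega}$ is slightly slicker but equivalent.)

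In the case $\beta>0$, however, your final step has a genuine gap. You correctly derive the unweighted balance $\lvert M\cap B\rvert\simeq\lvert B\setminus M\rvert$ and hence $\mathcal{H}^{n-1}(\partial M\cap B)\gtrsim\diam(B)^{n-1}$. But the quantity you then propose to control, $\mathcal{H}^{n-1}\bigl(\partial M\cap B\cap\{\delta_E<t\}\bigr)$, is \emph{not} bounded by the $P(s)$-property: that property controls the Lebesgue measure $\lvert B\cap E_t\rvert$ of tubular neighborhoods (and, with more work, $\mathcal{H}^{n-1}(\partial E_t\cap B)$), not the $(n-1)$-measure of an \emph{arbitrary} smooth hypersurface $\partial M$ inside such a neighborhood. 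Nothing prevents $\partial M$ from oscillating wildly inside $E_t$ and accumulating arbitrarily large surface area there, so one cannot conclude that ``a definite portion of $\partial M\cap B$'' lies where $\delta_E$ is large.

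The paper circumvents this by never attempting to bound that quantity. It first disposes of the easy subcase $\diam(B)<\dist(B,E)$, where the weight is essentially constant on $B$ and your argument works as written. In the hard subcase $\diam(B)\ge\dist(B,E)$ it introduces the auxiliary set $M_2=E_\eta^c$ with $\eta=\diam(B)/N$ and applies the relative isoperimetric inequality to $M\cap M_2$ rather than to $M$. The boundary of $M\cap M_2$ inside $B$ splits as $(\partial M\cap M_2)\cup(M\cap\partial E_\eta)$. The second piece is bounded by $\mathcal{H}^{n-1}(\partial E_\eta\cap B)\lesssim \eta^{s-1}\diam(B)^{n-s}$, and \emph{this} is where $\udima(E)<n-1$ (i.e.\ $s>1$) enters; the first piece lies in $\{\delta_E\ge\eta\}$, so that
\[
\mathcal{H}^{n-1}(\partial M\cap M_2\cap B)\le\eta^{-\beta}\int_{\partial M\cap B}\delta_E^\beta\,d\mathcal{H}^{n-1}\,.
\]
Choosing $N$ large makes the $\partial E_\eta$ contribution absorbable. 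In short, the correct object to control via $P(s)$ is $\mathcal{H}^{n-1}(\partial E_t\cap B)$, not $\mathcal{H}^{n-1}(\partial M\cap E_t\cap B)$; once you make this switch, your outline becomes the paper's proof.
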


\begin{proof}
We give an outline of the proof and, moreover, 
provide some additional details
that are not very explicit in~\cite{MR1021144}.

Let $B$ be ball a which satisfies the assumption~\eqref{e.halfball}.
We consider first  
the case $\beta \le 0$. 
The key tools in this case are:
(A) the inequality $\lvert M\cap B\rvert \le (1-\kappa)\lvert B\rvert$ 
with  a constant $\kappa >0$ 
independent of $M$ and $B$ 
(this inequality is not explicitly stated in~\cite{MR1021144} but is used there) and
(B) the weighted Poincar\'e inequality
\begin{equation}\label{e.poincare}
\int_B \lvert u(x)- u_B \rvert \,\delta_E(x)^\beta\,dx 
\le C\diam(B) \int_B \lvert \nabla u(x)\rvert \,\delta_E(x)^\beta\,dx
\end{equation}
that is valid for Lipschitz functions on $B$ (see e.g.~\cite[pp.~387--388]{MR1021144}).
We remark that the assumption 
$\udima(E)<n+\beta$ is used to establish both (A) and (B).
The key observation is that then
$\delta_E^\beta$ satisfies an $A_1$-condition
 by Lemma~\ref{l.A_1}, 
and so 
property (A) follows from \cite[Lemma~2.2 and Theorem~2.9]{MR807149}.

Having (A) and (B), the idea is to approximate the characteristic function of $M$ with
the Lipschitz functions
\[
u_\varepsilon(x) = \begin{cases} 1\,\qquad &x\in M^\varepsilon\\
\mathrm{dist}(x,\partial M)/\varepsilon\,,\qquad &x\in M\setminus M^\varepsilon\\
0\,,\qquad &x\in M^c\,,
\end{cases}
\]
where $M^\varepsilon = \{x\in M\,:\, \mathrm{dist}(x,\partial M) > \varepsilon\}$.
Indeed, by using (A), (B), and the coarea formula
\cite[Theorem~3.2.12]{MR0257325}, we obtain
\begin{align*}
 \int_{M\cap B} \delta_E(x)^\beta\,dx 
&=\lim_{\varepsilon \to 0+}\, \int_{M^\varepsilon\cap B} \delta_E(x)^\beta\,dx \le C\kappa^{-1} \diam(B) \, \lim_{\varepsilon \to 0+}\int_B \lvert \nabla u_\varepsilon(x)\rvert \,\delta_E(x)^\beta\,dx\\
&= C\kappa^{-1} \diam(B) \, \lim_{\varepsilon \to 0+} \frac{1}{\varepsilon} \int_0^\varepsilon \int_{B\cap \{x\in M\,:\, \mathrm{dist}(x,\partial M)=t\}} \delta_E(x)^\beta\, d\mathcal{H}^{n-1}(x)\,dt\\
&=C\kappa^{-1} \diam(B)\,\int_{\partial M \cap B} \delta_E(x)^\beta\, d\mathcal{H}^{n-1}(x)\,.
\end{align*}
Actually, in order to make the last limiting step rigorous, one
should first perform
the estimates above with the family of truncations 
\[\delta_E(x)^{\beta,\lambda} = \min \{\delta_E(x)^\beta,\lambda\}\,,\qquad \lambda>0\,,\]
as weights.
The resulting estimates turn out to be  uniform in $\lambda$; namely,
the truncations satisfy an $A_1$-condition with 
the  same constant as $\delta_E^\beta$ does. 
Consequently, the weighted
Poincar\'e inequality~\eqref{e.poincare} holds for these truncated weights
with a constant
independent of the truncation parameter $\lambda>0$,
and this yields the desired estimate also for the weight~$\delta_E^\beta$.

The case where $\beta>0$ and $\diam(B) < \dist(B,E)$ 
is quite straightforward,
since in this case we have 
$\delta_E(x)^\beta \simeq \dist(B,E)^\beta$ for every $x\in B$.
This equivalence, together with the assumption~\eqref{e.halfball},
used for both $M$ and $M^c$,
allows us to employ the isoperimetric
inequality~\cite[p.~163]{Mazya1985} as follows:
\begin{align*}
\int_{M\cap B} \delta_E(x)^\beta \,dx &\le \dist(B,E)^\beta \diam(B)
\min \{ \lvert M\cap B\rvert, \lvert M^c\cap B\rvert \}^{(n-1)/n}\\
&\lesssim \dist(B,E)^\beta \diam(B)\,
\mathcal{H}^{n-1}(\partial M\cap B)\\&\simeq \diam(B)\int_{\partial M\cap B}  \delta_E(x)^\beta\,d\mathcal{H}^{n-1}(x)\,.
\end{align*}

Finally, we consider 
the case where $\beta>0$ and $\diam(B) \ge \dist(B,E)$.
In this case we have the additional assumption that $\udima(E)<n-1$; 
this permits us to
fix a number $s>1$ such that $\udima(E)<n-s<n-1$. 
By Theorem~\ref{t.hori_and_ass}, 
the set $E$ then has the $P(s)$-property.
Set $\eta=\diam(B)/N$ for some large $N>1$ that is to be determined later,
and write \[
M_1=M\,,\qquad M_2=E_\eta^c=\{x\in \R^n\,:\,\delta_E(x)\ge \eta\}\,.\]
For simplicity, we first assume that $\partial M_2$ is a closed $(n-1)$-dimensional 
manifold of class $C^\infty$; the modifications required in the
general case are discussed at  the end of the proof.

Since $E$ is  a closed porous set and $\udima(E) < n+\beta$
(the latter is actually a triviality since $\beta>0$),
we have by Lemma~\ref{l.equiv} that 
\begin{equation}\label{e.starting}
\begin{split}
\diam(B)^n \sup_{x\in B} \delta_E(x)^\beta &\lesssim \diam(B)^{n+\beta}\simeq \int_B  \delta_E(x)^\beta\,dx\le 2 \int_{M\cap B} \delta_E(x)^\beta\,dx\\
&\le 2\sup_{x\in B} \delta_E(x)^\beta \,\lvert M_1\cap M_2\cap B\rvert + 2\int_{M_1\cap M_2^c\cap B} \delta_E(x)^\beta\,dx\,.
\end{split}
\end{equation}
Using the $P(s)$-property,
the last term in~\eqref{e.starting} can be estimated 
 as follows:
\begin{align*}
\int_{M_1\cap M_2^c\cap B} \delta_E(x)^\beta\,dx &\le \sup_{x\in B} \delta_E(x)^\beta\, \lvert E_\eta\cap B\rvert
\le CN^{-s}\sup_{x\in B} \delta_E(x)^\beta \, \diam(B)^{n}\,.
\end{align*}
From the $P(s)$-property, as in \cite[Proposition~6.1(4)]{MR1021144},
or from the estimate $\udima(E)<n-s$, as in \cite[Corollary~5.10]{KLV},
we obtain that
\[
 \mathcal{H}^{n-1} (\partial E_\eta \cap B)\le C\eta^{s-1} \diam(B)^{n-s} = CN^{1-s} \diam(B)^{n-1}\,,
\] 
where the constant $C$ is independent of both $B$ and $\eta$. 
Using inequalities~\eqref{eq.equiv} and~\eqref{e.halfball}, it is not hard to show that
\[
\lvert M_1\cap M_2 \cap B \rvert \le C \lvert (M_1\cap M_2)^c \cap B \rvert
\]
where $C$ is again independent $B$ and $\eta$.
An isoperimetric inequality \cite[p.~163]{Mazya1985} 
(we also refer to \cite[Lemma 4.5]{MR1021144}) 
then yields that
\begin{equation}\label{e.isoperimetrics}
\begin{split}
\lvert M_1\cap M_2 \cap B \rvert &\le  C\,d(B) \min\{ \lvert M_1\cap M_2 \cap B \rvert,
 \lvert (M_1\cap M_2)^c \cap B \rvert \}^{(n-1)/n}\\ 
&\le C\diam(B)\big( \mathcal{H}^{n-1}((M_1\cap \partial M_2)\cap B) 
+ \mathcal{H}^{n-1}((\partial M_1\cap M_2)\cap B)\big)\\
&\le C\diam(B)\bigg( \mathcal{H}^{n-1} (\partial E_\eta \cap B) + \eta^{-\beta} \int_{\partial M_1\cap B} \delta_E(x)^{\beta}\,d\mathcal{H}^{n-1}(x) \bigg)\\
&\le  CN^{1-s}  \diam(B)^{n} + CN^{\beta} \diam(B)^{1-\beta}\int_{\partial M\cap B} \delta_E(x)^{\beta}\,d\mathcal{H}^{n-1}(x)\,.
\end{split}
\end{equation}
Since $\sup\limits_{x\in B}\delta_E(x)^\beta \le C \diam(B)^\beta$ and $-s<1-s$, 
we conclude from inequalities above that
\begin{align*}
\diam(B)^n & \sup_{x\in B} \delta_E(x)^\beta \le CN^{1-s} \sup_{x\in B} 
\delta_E(x)^\beta  \diam(B)^n
+CN^{\beta}  \, \diam(B)\int_{\partial M\cap B} \delta_E(x)^{\beta}\,d\mathcal{H}^{n-1}(x)\,.
\end{align*}
As $s>1$, we can now choose $N$ to be so large that
$CN^{1-s}< \frac 1 2$, whence
the first term on the right-hand side 
can be absorbed to the left-hand side, 
and the claimed inequality follows as an easy consequence.

When $\partial M_2$ is not a closed $(n-1)$-dimensional $C^\infty$-manifold, 
the isoperimetric type inequality
(the second step in \eqref{e.isoperimetrics}) might fail. 
In this case, the following approximation is deployed. 
First, we fix a 
function $g\in C^\infty(\R^n\setminus \partial E_\eta)$  such that,
for each $x\in \R^n\setminus \partial E_\eta$,
\[
c_1\, \delta_{\partial E_\eta}(x) \le g(x) \le c_2\, \delta_{\partial E_\eta}(x)
\]
and $\lvert \nabla g(x)\rvert\le c_3$.
Here the constants $c_1$, $c_2$ and $c_3$ 
 can be chosen to be independent 
of $\partial E_\eta$, c.f.~\cite[VI.2.1]{MR0290095}.
 For $\varepsilon>0$, we write
\[
M_{2}^\varepsilon = M_{2} \cup \{x\in \R^n\,:\, g(x) \le \varepsilon \}\,.
\]
By applying the assumption $s>1$ and the $P(s)$-property, 
we obtain a sequence $(\varepsilon_j)_{j\in\N}$ of positive
non-critical values of $g$, converging to zero, such that each $G_j=\{x\,:\, g(x)=\varepsilon_j\}$ is 
an $(n-1)$-dimensional manifold of
class $C^\infty$ and 
\[\limsup_{j\to \infty} \mathcal{H}^{n-1}( G_j\cap B)\le C\eta^{s-1} d(B)^{n-s}\,,\]
where $C$ is independent of $B$ and $\eta$, we refer to \cite[p.~385]{MR1021144}. 
Since every $\varepsilon_j$ is a non-critical value of $g$,
 the set 
$\partial M_2^{\varepsilon_j}$ is an $(n-1)$-dimensional 
manifold of class $C^\infty$
and $\partial M_2^{\varepsilon_j}\subset \{x\in \R^n\,:\, g(x)=\varepsilon_j\}$. 
Indeed,
the boundary is locally represented by the latter level set which, in turn, is
given in terms of a non-critical value of $g$.
We can now adapt the estimates starting from~\eqref{e.starting}, 
 first  replacing $M_2$ by each 
$M_2^{\varepsilon_j}$, where $j\in\N$, and then
applying a limiting argument. \end{proof}

\begin{rem}
The last case 
in the proof of Lemma~\ref{lem.third}, 
where $\beta>0$ and $\diam(B) \ge \dist(B,E)$,
is the only instance
in the proof of Theorem~\ref{t.hardy} where
validity of the $P(s)$-property is needed
for all $0\le\eta_1<\eta_2$, in particular for
$\eta_1$ close to $\eta_2$. Moreover, here it
suffices that the $P(s)$-property holds for
some $s>1$. 
\end{rem}

To conclude the proof of Theorem~\ref{t.hardy}, we
prove Lemma~\ref{lem.Proposition_5.1} with the help of 
Lemma~\ref{lem.third}.

\begin{proof}[Proof of Lemma~\ref{lem.Proposition_5.1}]
Let us fix an admissible set $M$ in $\R^n$. 
First we notice that $\udima(E) < n+\beta$. Indeed, for $\beta\ge 0$ this is
trivial  by Lemma~\ref{l.aikawa}(D), 
 and for $\beta<0$ follows from the assumptions since
$0\le \udima(E) < q(n-1+\beta)\le n(n-1+\beta)/(n-1) < n + \beta$.
Thus $n+\beta\in\mathcal{A}(E)$ by Lemma~\ref{l.aikawa}(B), and
therefore $\delta_E^\beta$ is locally integrable.
When $x\in M$, the function
\[
\Lambda:[0,\infty)\to \R\,,\qquad \Lambda(r) = \int_{M\cap B(x,r)}\delta_E(y)^\beta\,dy 
- \tfrac 12 \int_{B(x,r)} \delta_E(y)^\beta\,dy\,,
\]
is continuous in $r$,  
$\Lambda(r)>0$ for small values of $r$, and
$\Lambda(r)\to -\infty$ as $r\to\infty$ by Lemma~\ref{l.equiv}. 
Hence, by the intermediate value theorem,
for each $x\in M$ there exists a ball $B(x,r(x))$ such that
\[
\int_{M\cap B(x,r(x))}\delta_E(y)^\beta\,dy = 
\tfrac 12 \int_{B(x,r(x))} \delta_E(y)^\beta\,dy\,;
\]
see also~\cite[Proposition~6.2]{MR1021144}.
By Besicovitch's covering theorem, see e.g.\ \cite[p.~30]{MR1333890}, 
we  thus  find a sequence of balls 
$\{B_j\}$ in $\{B(x,r(x))\,:\,x\in M\}$ 
such that $M$ is covered by the
union of the balls $\overline{B_j}$ and the overlap of these balls if uniformly bounded.

Using estimate~\eqref{eq.equiv} with 
 $s=q(n-1+\beta)>\udima(E)$, 
we obtain
\begin{equation}\label{e.start}
\begin{split}
\biggl(\int_M \delta_E(x)^{q(n-1+\beta)-n}\, dx\biggr)^{1/q}
&\le \sum_j \biggl(\int_{B_j\cap M} \delta_E(x)^{q(n-1+\beta)-n}\, dx\biggr)^{1/q}\\
&\lesssim \sum_j \diam(B_j)^{n/q} \bigl( \diam(B_j) + \dist(B_j,E) \bigr)^{n-1+\beta-n/q}\\
&\le \sum_j \diam(B_j)^{n/q} \bigl( \diam(B_j) + \dist(B_j,E)\big)^{\beta}
                       \diam(B_j)^{n-1-n/q}\\
& = \sum_j \diam(B_j)^{n-1} \bigl( \diam(B_j) + \dist(B_j,E)\bigr)^{\beta},
\end{split}
\end{equation}
where the penultimate step holds since $n-1-n/q\le 0$.
We continue the estimate in~\eqref{e.start}, 
first using~\eqref{eq.equiv} with $s=n+\beta>\udima(E)$,
and then Lemma~\ref{lem.third}, and conclude that
\begin{align*}
\sum_j d(B_j)^{n-1} & \bigl( d(B_j) + \dist(B_j,E)\bigr)^{\beta}
\lesssim \sum_j d(B_j)^{-1} \int_{B_j} \delta_E(x)^{\beta}\,dx\\
& \le 2\sum_j d(B_j)^{-1} \int_{M\cap B_j} \delta_E(x)^{\beta}\,dx\\
& \lesssim \sum_j \int_{\partial M\cap B_j} \delta_E(x)^{\beta}\,d\mathcal{H}^{n-1}(x)
\lesssim \int_{\partial M} \delta_E(x)^{\beta}\,d\mathcal{H}^{n-1}(x)\,.
\end{align*}
This proves Lemma~\ref{lem.Proposition_5.1}.
\end{proof}

\section{Necessary conditions for global inequalities}\label{s.application}

We turn to the necessary conditions for the global Hardy--Sobolev inequality~\eqref{e.weighted}.
The case $\beta\ge 0$ is straightforward, and in particular yields the necessity
part of Theorem~\ref{t.char}.

\begin{thm}\label{t.res}
Suppose that $E\not=\emptyset$ is a closed set in $\R^n$.
Let $1\le p,q  <\infty$ and $\beta\ge 0$ be such that $q(n-p+\beta)/p\not=n$ and that
inequality 
\begin{equation}\label{e.assumption}
\biggl(\int_{\R^n} \lvert f(x)\rvert^q \, \delta_E(x)^{(q/p)(n-p+\beta)-n}\,dx\biggr)^{1/q}
\le C\biggl(\int_{\R^n} \lvert \nabla f(x)\rvert^p\,\delta_E(x)^\beta\,dx\biggr)^{1/p}
\end{equation}
holds for every $f\in C^\infty_0(\R^n)$. 
Then \[\udima(E)< \frac{q}{p}(n-p+\beta)\,.\]
\end{thm}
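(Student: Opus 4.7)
Let $s := (q/p)(n-p+\beta)$. My plan is to deduce from \eqref{e.assumption} the Aikawa condition $s \in \mathcal{A}(E)$ whenever $0 < s < n$, and then to invoke the self-improvement property in Lemma~\ref{l.aikawa}(C) to upgrade this to the strict bound $\udima(E) < s$. The remaining case $s > n$ is immediate from Lemma~\ref{l.aikawa}(A), which gives $\udima(E) \le n < s$; the hypothesis $s \ne n$ is exactly what rules out the borderline value, at which Lemma~\ref{l.aikawa}(C) would only yield the non-strict bound $\udima(E) \le n$.

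For the main case $0 < s < n$, I test \eqref{e.assumption} against a standard cutoff. Fix $x \in E$ and $0 < r < \diam(E)$, and choose $\varphi_{x,r} \in C_0^\infty(\R^n)$ with $\varphi_{x,r} \equiv 1$ on $B(x,r)$, $\operatorname{supp} \varphi_{x,r} \subset B(x,2r)$, and $|\nabla \varphi_{x,r}| \le C/r$. The left-hand side of \eqref{e.assumption} applied to $f = \varphi_{x,r}$ is then bounded below by
\[
\biggl(\int_{B(x,r)} \delta_E(y)^{s-n}\,dy\biggr)^{1/q}.
\]
For the right-hand side, the condition $x \in E$ gives $\delta_E(y) \le |y-x| \le 2r$ for every $y \in B(x,2r)$, and because $\beta \ge 0$ this in turn yields $\delta_E(y)^\beta \lesssim r^\beta$ on the support of $\nabla \varphi_{x,r}$. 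Hence
\[
\int_{\R^n} |\nabla \varphi_{x,r}|^p\, \delta_E(y)^\beta\,dy \lesssim r^{-p}\, r^\beta\, |B(x,2r)| \simeq r^{n-p+\beta},
\]
so the right-hand side of \eqref{e.assumption} is bounded by $C r^{(n-p+\beta)/p}$. Raising to the $q$-th power and using the identity $q(n-p+\beta)/p = s$, I obtain
\[
\int_{B(x,r)} \delta_E(y)^{s-n}\,dy \le C r^s
\]
for all $x \in E$ and $0 < r < \diam(E)$; that is, $s \in \mathcal{A}(E)$. Since $0 < s < n$, Lemma~\ref{l.aikawa}(C) then gives $\udima(E) < s$, as desired.

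I do not anticipate any real obstacle; the argument is a one-shot testing estimate combined with the self-improvement of the Aikawa condition. The two hypotheses of the theorem enter transparently: the sign assumption $\beta \ge 0$ is used exactly once, to bound $\delta_E(y)^\beta$ by $r^\beta$ on the annular support of $\nabla \varphi_{x,r}$, while the restriction $q(n-p+\beta)/p \ne n$ prevents the critical value $s=n$ at which the Aikawa self-improvement fails to give a strict bound. For $\beta < 0$ the distance weight is unbounded near $E$ and the above cutoff estimate is no longer sharp, which is precisely why the necessary condition in the signed case $\beta < 0$ requires additional geometric hypotheses on $E$ (as will be seen in the subsequent results of this section).
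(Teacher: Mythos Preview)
Your proof is correct and follows essentially the same approach as the paper: test the inequality with a standard cutoff, use $\beta \ge 0$ to bound $\delta_E^\beta$ on the right-hand side, derive the Aikawa condition for $s$, and apply the self-improvement of Lemma~\ref{l.aikawa}(C). The only minor omission is the case $s \le 0$, which your own testing argument immediately rules out (the left-hand integral would then be infinite while the right-hand side is finite).
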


\begin{proof}
Let $\varphi\in C^\infty_0(\R^n)$ be a function which is 
supported in the ball $B(0,2)$ and satisfies  $\varphi(y)=1$ if $y\in B(0,1)$.
Fix a point $x\in E$ and a radius $0<r<\diam(E)$. 
We write
\[
f(y)=\varphi((y-x)/r)
\]
for  every $y\in \R^n$. 
Since $\beta\ge 0$, we obtain from 
Lemma~\ref{l.aikawa}(B) that $\beta+n\in\mathcal{A}(E)$, 
and since $f(y)=1$ for all $y\in B(x,r)$ and $f$ is supported in $B(x,2r)$, 
it follows from Lemma~\ref{l.aikawa}(E) that 
\begin{align*}
\int_{B(x,r)} \delta_E(y)^{q(n-p+\beta)/p-n}\,dy 
& \le \int_{B(x,2r)} \lvert f(y)\rvert^q \, \delta_E(y)^{(q/p)(n-p+\beta)-n}\,dy
\\&\lesssim  \biggl(\int_{B(x,2r)} \lvert \nabla f(y)\rvert^p\,\delta_E(y)^\beta\,dy\biggr)^{q/p}\\
&\lesssim r^{-q}\biggl(\int_{B(x,2r)} \delta_E(y)^{\beta+n-n}\,\,dy\biggr)^{q/p}
\lesssim r^{q(n-p+\beta)/p}\,.
\end{align*}
This estimate shows that $q(n-p+\beta)/p>0$ and $q(n-p+\beta)/p\in \mathcal{A}(E)$, but 
then, by the self-improvement of Aikawa condition (see Lemma~\ref{l.aikawa}(C)), we have
$\udima(E)< q(n-p+\beta)/p$.
(Note that if $q(n-p+\beta)/p > n$ then we are
already done).
\end{proof}

The case $\beta<0$ is more technical, 
and based upon  the 
self-improvement of reverse H\"older inequalities.
In this context, we need to 
assume that $E$ is both porous and compact.
 We do not know if it is possible to remove  
the porosity assumption, but at least the
compactness assumption cannot be entirely omitted;
for further details, we refer to Example~\ref{e.noncompact}
and Remark~\ref{r.noncompact} below.

\begin{thm}\label{t.res.beta<0}
Suppose that $E\not=\emptyset$ is a compact and porous set in $\R^n$.
Let $\beta< 0$ and $1\le p<q<np/(n-p)<\infty$  be such that 
the Hardy--Sobolev inequality~\eqref{e.assumption}
 holds for every $f\in C^\infty_0(\R^n)$. Then
\[\udima(E)< \frac{q}{p}(n-p+\beta)\,.\]
\end{thm}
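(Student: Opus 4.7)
Set $s:=(q/p)(n-p+\beta)$. The conclusion is vacuous unless $s>0$, so I would assume this; combined with $\beta<0$ and $q<p^{*}=np/(n-p)$, an elementary computation yields the chain $n-p+\beta<s<n+\beta<n$, which will be used throughout. My plan is to verify the Aikawa-type estimate $\int_{B(x,r)}\delta_E^{s-n}\,dy\lesssim r^s$ for every $x\in E$ and every $0<r\le r_0:=2\diam(E)$. Once this is in hand, Lemma~\ref{l.aikawa}(E) (using the compactness of $E$) promotes it to $s\in\mathcal{A}(E)$, and then Lemma~\ref{l.aikawa}(C) yields $\udima(E)<s$.

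Fix $x\in E$ and write $I(r):=\int_{B(x,r)}\delta_E^{s-n}\,dy$ and $J(r):=\int_{B(x,r)}\delta_E^{\beta}\,dy$. I would assemble three building blocks, each produced by inserting a cutoff of the form $f(y)=\varphi((y-x)/r)$ into \eqref{e.assumption}. \emph{Base case:} at the outer scale $r_0$, the support of $\nabla f$ lies in an annulus on which compactness of $E$ forces $\delta_E\simeq\diam(E)$, and a direct substitution produces $I(r_0)\le C_0 r_0^{s}$ with $C_0$ independent of $x$. \emph{Bump step:} at an arbitrary scale $r>0$, \eqref{e.assumption} (raised to the $q$-th power) gives
\[
I(r)\lesssim r^{-q}\,J(2r)^{q/p}.
\]
\emph{Jensen step:} since $\beta<0$ and $s-n<\beta$, the number $a:=\beta/(s-n)$ lies in $(0,1)$ and satisfies $a(s-n)=\beta$; hence $\delta_E^{\beta}=(\delta_E^{s-n})^{a}$, and Jensen's inequality for the concave map $t\mapsto t^{a}$ gives
\[
J(r)\le |B(x,r)|^{1-a}\,I(r)^{a}\lesssim r^{n(1-a)}\,I(r)^{a}.
\]

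Chaining the bump and Jensen steps and simplifying via $a(s-n)=\beta$ yields the recursion
\[
I(r)\lesssim r^{s(1-\alpha)}\,I(2r)^{\alpha},\qquad \alpha:=aq/p,
\]
and a short algebraic check confirms that the hypothesis $q<p^{*}$ is equivalent to $\alpha<1$. Writing $g(r):=I(r)/r^{s}$ turns this into $g(r)\le C\,g(2r)^{\alpha}$; iterating downward from the base $g(r_0)\le C_0$ and exploiting $\alpha<1$ shows that the sequence $g(r_0/2^{k})$ remains bounded by $\max(g(r_0),\,C^{1/(1-\alpha)})$ for every $k\ge 0$. Hence $I(r)\lesssim r^{s}$ uniformly in $x\in E$ and in $r\in(0,r_0]$, completing the Aikawa verification.

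The main obstacle is that for $\beta<0$ the integral $J(2r)$ appearing on the right-hand side of the bump step is not a priori finite, so the bump inequality alone would be vacuous at small scales. This is precisely what the Jensen step is designed to resolve: under the inductive hypothesis $I(2r)\lesssim r^{s}<\infty$, the Jensen estimate forces $J(2r)\lesssim r^{n+\beta}<\infty$, which in turn makes the bump step at scale $r$ yield a nontrivial bound on $I(r)$. Compactness of $E$ is crucial both to set up the finite outer scale $r_0=2\diam(E)$ and to apply Lemma~\ref{l.aikawa}(E); porosity ensures $|E|=0$ (via Lemma~\ref{l.aikawa}(D)) so that $\delta_E^{\beta}$ is defined almost everywhere and the surrounding machinery of Lemma~\ref{l.aikawa} can be invoked.
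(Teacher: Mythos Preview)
Your argument is correct and takes a genuinely different route from the paper's. The paper first extracts from \eqref{e.assumption} the reverse H\"older inequality $\bigl(\vint_Q\delta_E^{(q/p)\beta}\bigr)^{p/q}\lesssim \vint_{2Q}\delta_E^{\beta}$ (your ``bump step'', recast over cubes) and then invokes the Iwaniec--Nolder self-improvement, Proposition~\ref{p.improvement}, to lower the right-hand exponent to $\varepsilon\beta$ with $\varepsilon>0$ chosen so small that $\udima(E)<\varepsilon\beta+n$; porosity enters the paper's proof precisely here, because the existence of such an $\varepsilon$ requires $\udima(E)<n$. You bypass this Gehring-type step entirely with the elementary Jensen estimate $J(r)\lesssim r^{n(1-a)}I(r)^{a}$ and a dyadic iteration whose contraction is driven by $\alpha=aq/p<1$ (equivalent, as you note, to $q<p^*$). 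This is more self-contained, and your reliance on porosity is correspondingly lighter: in fact your base-case computation already forces $I(r_0)<\infty$ and hence $|E|=0$, so porosity plays no essential role in your argument beyond what compactness and the assumed inequality already give. One small wording issue: the case $s\le 0$ is not ``vacuous'' but impossible under the hypothesis, exactly because your base-case test function would then yield the contradiction $\infty=I(r_0)\le C_0 r_0^{s}<\infty$; this is worth stating explicitly.
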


For the proof  of Theorem~\ref{t.res.beta<0}, 
we need the following result 
due to Iwaniec--Nolder \cite[Theorem~2]{MR802488},
which shows that the exponent 
on the right-hand side of the reverse 
H\"older inequality~\eqref{e.reverse} can
actually be improved to any $t>0$.

\begin{prop}\label{p.improvement}
Let $0<s<p$ and $f\in L^p_{\textup{loc}}(G)$, where $G\subset \R^n$ is an open set.
Suppose that for each cube $Q$ with $2Q\subset G$,
\begin{equation}\label{e.reverse}
\bigg(\vint_Q \lvert f(x)\rvert^p\,dx\bigg)^{1/p}\le A \bigg( \vint_{2Q} \lvert f(x)\rvert^s\,dx\bigg)^{1/s}\,,
\end{equation}
where the constant $A>0$ is independent of $Q$. Then for each
$t>0$, $\sigma>1$ and each cube $Q$ with $\sigma Q\subset G$, 
\[
\bigg(\vint_Q \lvert f(x)\rvert^p\,dx\bigg)^{1/p}\le C\bigg( \vint_{\sigma Q} \lvert f(x)\rvert^t\,dx\bigg)^{1/t}\,,
\]
where the constant $C>0$ depends only on $\sigma$, $n$, $p$, $s$, $t$ and $A$.
\end{prop}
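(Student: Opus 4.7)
The plan is to reduce to the case $0<t<s$ via Hölder's inequality, use log-convexity of $L^r$-norms to interpolate the exponent downward, and finally absorb the arising $L^p$-term through a dyadic iteration coupled with a bootstrap argument in the spirit of Iwaniec--Nolder.

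For $t\ge s$ the claim follows immediately from Hölder's inequality in the probability measure $dx/|2Q|$, which gives $\bigl(\vint_{2Q}|f|^s\,dx\bigr)^{1/s}\le\bigl(\vint_{2Q}|f|^t\,dx\bigr)^{1/t}$, so the hypothesis yields $M_p(Q)\le A\,M_t(2Q)$, writing $M_r(Q'):=\bigl(\vint_{Q'}|f|^r\,dx\bigr)^{1/r}$. The passage from $2Q$ to $\sigma Q$ is routine: for $\sigma\ge 2$ use $M_t(2Q)\le(\sigma/2)^{n/t}M_t(\sigma Q)$, and for $1<\sigma<2$ cover $Q$ by finitely many subcubes of side length $(\sigma-1)\ell(Q)/2$ whose $2$-dilates fit into $\sigma Q$, apply the hypothesis on each, and sum.

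For the main case $0<t<s$, choose $\theta\in(0,1)$ with $1/s=\theta/t+(1-\theta)/p$. Log-convexity of $L^r$-norms on probability measures gives $M_s(Q')\le M_t(Q')^\theta M_p(Q')^{1-\theta}$ for every cube $Q'$. Substituting this into the hypothesis applied to $Q$ and applying Young's inequality with small parameter $\eta>0$ produces the absorbing-type estimate
\[
M_p(Q)\le\eta\,M_p(2Q)+C_\eta\,M_t(2Q),\qquad C_\eta\sim\eta^{-(1-\theta)/\theta}.
\]
Iterating along the dyadic chain $Q\subset 2Q\subset\cdots\subset 2^{N}Q\subset\sigma Q$ with $N\simeq\log_2\sigma$ and summing the geometric series yields
\[
M_p(Q)\le C_n'\,\eta^{N}\,M_p(\sigma Q)+C_{\eta,\sigma,n,t}\,M_t(\sigma Q).
\]

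The main obstacle is to eliminate the residual $C_n'\eta^{N}M_p(\sigma Q)$ term, since $M_p(\sigma Q)$ is not a priori dominated by $M_t(\sigma Q)$. The standard remedy is a bootstrap/fixed-point argument: after localizing or truncating $f$ (e.g.\ via $f_M:=\min(|f|,M)$) so that the supremum $\Lambda_M:=\sup_{Q'}M_p(Q')[f_M]/M_t(\sigma Q')[f_M]$ is a priori finite on the relevant family of cubes $Q'$ with $\sigma Q'\Subset G$, the iterated inequality forces $\Lambda_M\le\tfrac{1}{2}\Lambda_M+C_{\eta,\sigma,n,t}$ once $\eta$ is chosen so small that $C_n'\eta^{N}<1/2$; a monotone-convergence limit $M\to\infty$ together with exhaustion of $G$ by compact sets then completes the proof. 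The delicate point is verifying that the truncation procedure preserves a suitable form of the reverse Hölder hypothesis, which is achieved by carefully estimating the $L^s$-contribution from the set $\{|f|>M\}$ in terms of the $L^p$-average already controlled on the same cube.
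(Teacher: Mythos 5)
The paper offers no proof of this proposition: it is quoted verbatim from Iwaniec--Nolder \cite[Theorem~2]{MR802488}. Your reduction to $0<t<s$, the covering trick for $1<\sigma<2$ in that easy case, and the key step ``interpolate $M_s(2Q)\le M_t(2Q)^\theta M_p(2Q)^{1-\theta}$ with $1/s=\theta/t+(1-\theta)/p$, then apply Young to get $M_p(Q)\le\eta M_p(2Q)+C_\eta M_t(2Q)$'' are all correct and are indeed the heart of the cited argument. The gap is in how you try to absorb the $M_p$-term. Iterating along the dyadic chain $Q\subset 2Q\subset\dots\subset 2^NQ\subset\sigma Q$ gives only $N\simeq\log_2\sigma$ steps (none at all when $1<\sigma<2$), so you are left with $C\eta^NM_p(\sigma Q)$ where $\eta^N$ is a fixed constant attached to a quantity on the \emph{larger} cube; nothing makes this term comparable to $M_t(\sigma Q)$. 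The proposed bootstrap does not repair this: with $\Lambda=\sup_{Q'}M_p(Q')/M_t(\sigma Q')$ you can only estimate $M_p(\sigma Q)\le\Lambda\,M_t(\sigma^2Q)$, not $\Lambda\,M_t(\sigma Q)$, so the fixed-point inequality $\Lambda\le\tfrac12\Lambda+C$ is never actually derived. In addition, the truncation $f_M=\min(\lvert f\rvert,M)$ does not inherit the hypothesis~\eqref{e.reverse} (on $\{\lvert f\rvert>M\}$ one has $\lvert f\rvert^s>f_M^s$, so $\vint_{2Q}\lvert f\rvert^s$ is not controlled by $\vint_{2Q}f_M^s$), and the global supremum $\Lambda_M$ has no a priori finiteness.

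The correct absorption mechanism is local and needs neither truncation nor a global supremum. Fix $Q_0$ with $\sigma Q_0\subset G$ and set $\phi(r)=\int_{rQ_0}\lvert f\rvert^p$ for $1\le r\le\sigma$; this is bounded since $f\in L^p_{\textup{loc}}(G)$. For $1\le r<r'\le\sigma$, cover $rQ_0$ by $\simeq(r'-r)^{-n}$ congruent cubes $Q_i$ of side length $\simeq(r'-r)\ell(Q_0)$ such that $2Q_i\subset r'Q_0$ and the $2Q_i$ have bounded overlap. Applying your interpolation--Young estimate on each $Q_i$ and summing, and using $\vint_{2Q_i}\lvert f\rvert^t\le C(r'-r)^{-n}\vint_{\sigma Q_0}\lvert f\rvert^t$, one obtains $\phi(r)\le C_0\eta\,\phi(r')+C_\eta(r'-r)^{-np/t}\lvert Q_0\rvert\bigl(\vint_{\sigma Q_0}\lvert f\rvert^t\bigr)^{p/t}$. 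Choosing $\eta$ with $C_0\eta\le\tfrac12$, the standard real-variable iteration lemma (if $\phi$ is bounded and $\phi(r)\le\vartheta\phi(r')+B(r'-r)^{-\alpha}$ for all $r<r'$ with $\vartheta<1$, then $\phi(1)\le C(\alpha,\vartheta)B(\sigma-1)^{-\alpha}$) yields $\phi(1)\le C\lvert Q_0\rvert\bigl(\vint_{\sigma Q_0}\lvert f\rvert^t\bigr)^{p/t}$, which is the claim. The two ideas you are missing are thus: (i) a covering step converting the fixed doubling $Q\mapsto 2Q$ of the hypothesis into an arbitrarily small dilation $rQ_0\mapsto r'Q_0$ at the price of a factor $(r'-r)^{-np/t}$, and (ii) the iteration lemma, which absorbs $\phi(r')$ using only the boundedness of $\phi$ on $[1,\sigma]$.
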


\begin{proof}[Proof of Theorem \ref{t.res.beta<0}]
Since $E$ is porous, we may assume that $q(n-p+\beta)/p\not=n$. 
By the proof of Theorem~\ref{t.res}, it suffices to show
that $\beta + n\in\mathcal{A}(E)$, and 
by the assumptions and Lemma~\ref{l.aikawa}(A,B)
for this it suffices that $(q/p)\beta + n>0$ and $(q/p)\beta + n\in\mathcal{A}(E)$.  

To this end, let $Q$ be a cube in $\R^n$.
If $2Q\cap E\not=\emptyset$,   
we have for every $y\in Q$ that
\[
\delta_E(y)^{(q/p)\beta} =\delta_E(y)^{n+q-nq/p+(q/p)(n-p+\beta)-n}\le  
C\ell(Q)^{n+q-nq/p} \delta_E(y)^{(q/p)(n-p+\beta)-n}\,;
\]
notice that it follows from the assumptions that $n+q-nq/p\ge 0$.
Hence, inequality \eqref{e.assumption} with an appropriate
test function that is adapted to $Q$ shows that 
\begin{equation}\label{e.claim}\begin{split}
\bigg(\vint_Q\delta_E(y)^{(q/p)\beta}\,dy\bigg)^{p/q}
&\le \bigg(\ell(Q)^{n+q-nq/p} \vint_Q \delta_E(y)^{(q/p)(n-p+\beta)-n}\,dy\bigg)^{p/q} \\&\le C  
\vint_{2Q} \delta_E(y)^\beta\,dy\,.
\end{split}\end{equation}
On the other hand, if $2Q\cap E=\emptyset$, then $\delta_E(y)\simeq \dist(Q,E)$ 
for every $y\in Q$ 
which easily gives inequality~\eqref{e.claim}. 

Observe that $(q/p)(n-p+\beta)-n < \beta$;
indeed, if $n-p+\beta \le 0$ this is immediate, and
if $n-p+\beta > 0$, we obtain 
\begin{equation}\label{e.cont} 
\frac{q}{p}(n-p+\beta) < \frac{n}{n-p}(n-p+\beta) < n+\beta\,.  
\end{equation} 
Fix $x\in E$ and $R>0$ such that $E\subset B(x,R/2)$
(recall that $E$ was assumed to be compact).
By applying the assumed inequality~\eqref{e.assumption} 
to a function $f\in C^\infty_0(\R^n)$
which satisfies the condition $f(y)=1$ if $y\in B(x,R)$, 
we see that $\delta_E^\beta$ is locally integrable.
From this fact and inequality~\eqref{e.claim}
it follows that $\delta_E^\beta\in L^{q/p}_{\textup{loc}}(\R^n)$
and $(q/p)\beta + n > 0$. 
Choose $\varepsilon>0$ in such a way that $\udima(E)<\varepsilon\beta+n$. 
Then, since $0<1<q/p$, we may apply
the self-improvement of the reverse H\"older inequality~\eqref{e.claim},
 given by Proposition \ref{p.improvement}, 
to conclude  that inequality
\begin{equation}\label{e.claim2} 
\bigg(\vint_Q\delta_E(y)^{(q/p)\beta}\,dy\bigg)^{p/q} 
\le C \bigg(\vint_{2Q} \delta_E(y)^{\varepsilon\beta}\,dy\bigg)^{1/\varepsilon}
\end{equation}
holds for all cubes $Q$ in $\R^n$. Here $C$ depends on $n$, $p$, $q$, $\varepsilon$ and
the constant 
appearing in the inequality~\eqref{e.claim}.
But inequality~\eqref{e.claim2} and the fact that $\varepsilon\beta+n\in\mathcal{A}(E)$ clearly
imply that $(q/p)\beta + n\in\mathcal{A}(E)$ as 
was  required.
\end{proof}

To see that some  additional assumption is needed
for the set $E$ in Theorem~\ref{t.res.beta<0},
let us  consider the following example.

\begin{example}\label{e.noncompact}
Let $E$ be an $(n-k)$-dimensional subspace in $\R^n$, where $1\le k<n$;
then $E$ is a closed and porous set.
Let us fix numbers $1<p<q<np/(n-p)<\infty$ 
and $\beta=-k$. Then
\[
\ldima(E) = n-k > n-p+\beta\,.
\]
By Theorem~\ref{t.fractional_hardy_fat}, which we have postponed
to 
the following section,
the $(q,p,\beta)$-Hardy--Sobolev inequality
 \eqref{e.assumption} 
 actually holds
for all functions $f\in C^\infty_0(\R^n)$ 
 satisfying 
$f(x)=0$ whenever $x\in E$;
note that Corollary~\ref{coro.hardy_ass} is not enough to guarantee this.
On the other hand, if $f\in C^\infty_0(\R^n)$ and $f(x)\not=0$ for some
$x\in E$, then there is a point $y\in E$ such that $\nabla f(y)\not=0$. In particular,
\[
\int_{\R^n} \lvert \nabla f\rvert^p\,\delta_E^\beta\, dx = \infty\,,
\]
and consequently
the $(q,p,\beta)$-Hardy--Sobolev inequality 
\eqref{e.assumption}  holds trivially 
also for such  functions.
Thus  we conclude 
that the $(q,p,\beta)$-Hardy--Sobolev inequality \eqref{e.assumption}
holds for all $f\in C^\infty_0(\R^n)$, but nevertheless
$\udima(E) >  (q/p)(n-p+\beta)$;
indeed, if $n-p+\beta \le 0$ this is immediate, and
if $n-p+\beta > 0$, we can repeat the estimate \eqref{e.cont}
to see that
\[ \frac{q}{p}(n-p+\beta) < n+\beta = \udima(E)\,.  \] 
Thus we have shown that, contrary to the case $\beta\ge 0$,
the conclusion of Theorem~\ref{t.res.beta<0} 
does not hold for all closed (and porous) $E\subset\R^n$. 
\end{example}

\begin{rem}\label{r.noncompact}
While Example~\ref{e.noncompact} shows that
the compactness assumption in Theorem~\ref{t.res.beta<0} cannot be 
completely removed,
it can still be relaxed.
Indeed, the proof of the theorem reveals that we may replace
the assumption that $E$ is a compact set by the assumption that  
$E$ is a closed set such that, for every  $x_0\in E$,
\[
\inf \int_{\R^n} \lvert \nabla f\rvert^p \,\delta_E^\beta\,dx < \infty\,,
\]
where the infimum ranges over all $f\in C^\infty_0(\R^n)$ such that
$f(x_0)=1$. This weighted $p$-capacity condition is clearly satisfied by 
 all compact sets $E$.  
 On the other hand, when the set $E$ is non-compact, 
it clearly suffices to assume that  $\delta_E^\beta$ is
locally integrable,
which in turn follows, for instance, if we assume that $\udimm(E\cap B)<n+\beta$ for all balls
$B$ centered at $E$. Note that Example~\ref{e.noncompact} shows the sharpness of this
last condition.
\end{rem}

\section{The case of thick complements revisited}\label{s.thick}

We have the following
slight generalization for
the  `thick'  case of Corollary~\ref{coro.hardy_ass} when $\beta\le 0$
and $p<q<p^*$. 
While Theorem~\ref{t.fractional_hardy_fat} has also independent interest, the main reason 
why we have included it here is that  Example~\ref{e.noncompact} relies on 
this result.

\begin{thm}\label{t.fractional_hardy_fat} 
Let $\beta \le 0$ and $1<p<q<np/(n-p)<\infty$. 
Suppose that $G$ is a proper open set in $\R^n$, $n\geq 2$, such that $\ldima(G^c) > n-p+\beta$;
if $G$ is unbounded, we assume that $G^c$ is unbounded, as well.
Then there is a constant $C>0$ such that
\begin{equation}\label{e.full}
\bigg(\int_{G} \vert f(x)\vert^q\delta_{\partial G}(x)^{\frac{q}{p}(n-p+\beta)-n}\,dx\bigg)^{1/q}
\le C\biggl(
\int_{\R^n} \lvert \nabla f(x)\rvert^p \,\delta_{\partial G}(x)^{\beta}\, dx\biggr)^{1/p}\,
\end{equation}
whenever $f\in C^\infty_0(\R^n)$ and $f(x)=0$ for all $x\in G^c$.
\end{thm}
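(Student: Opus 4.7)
The plan is to reduce the assertion to Corollary~\ref{coro.hardy_ass}, which under our hypotheses ($\beta\le 0<p-1$ and $\ldima(G^c)>n-p+\beta$) supplies the corresponding $(q,p,\beta)$-Hardy--Sobolev inequality for every $\varphi\in C^\infty_0(G)$ (and, specialized to $q=p$, the $(p,\beta)$-Hardy inequality). The task is then to extend the validity to the larger class of $f\in C_0^\infty(\R^n)$ with $f|_{G^c}=0$.

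Given such an $f$, I would approximate by cut-offs $\varphi_j=f\eta_j\in C_0^\infty(G)$, where $\eta_j\in C_0^\infty(\R^n)$ satisfies $\eta_j=1$ on $\{\delta_{\partial G}>2/j\}$, $\eta_j=0$ on $\{\delta_{\partial G}\le 1/j\}$, and $|\nabla\eta_j|\lesssim j$. Corollary~\ref{coro.hardy_ass} applied to each $\varphi_j$ yields
\[
\biggl(\int_G |\varphi_j|^q\,\delta_{\partial G}^{(q/p)(n-p+\beta)-n}\,dx\biggr)^{1/q}
\le C\biggl(\int_G |\nabla\varphi_j|^p\,\delta_{\partial G}^\beta\,dx\biggr)^{1/p},
\]
after which I pass to the limit $j\to\infty$. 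The left-hand side converges by monotone convergence to the target $\bigl(\int_G |f|^q\,\delta_{\partial G}^{(q/p)(n-p+\beta)-n}\,dx\bigr)^{1/q}$.

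For the right-hand side, the product rule gives $|\nabla\varphi_j|^p\lesssim \eta_j^p|\nabla f|^p+|f|^p|\nabla\eta_j|^p$. The first contribution is dominated by $\int_G|\nabla f|^p\,\delta_{\partial G}^\beta\,dx$, which equals $\int_{\R^n}|\nabla f|^p\,\delta_{\partial G}^\beta\,dx$ because $\nabla f=0$ almost everywhere on the closed set $G^c$. For the error term, using $j\lesssim\delta_{\partial G}^{-1}$ on the support of $\nabla\eta_j$ gives
\[
\int_G |f|^p|\nabla\eta_j|^p\,\delta_{\partial G}^\beta\,dx
\lesssim \int_{\{\delta_{\partial G}<2/j\}\cap G}|f|^p\,\delta_{\partial G}^{\beta-p}\,dx.
\]

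The main obstacle is showing that this error integral tends to zero, which amounts to integrability of $|f|^p\,\delta_{\partial G}^{\beta-p}$ on $G$. This is precisely the content of the $(p,\beta)$-Hardy inequality applied to $f$, extended from $C^\infty_0(G)$ to $C_0^\infty(\R^n)\cap\{f|_{G^c}=0\}$. Under the uniform $(p-\beta)$-fatness of $G^c$ (which is equivalent to $\ldima(G^c)>n-p+\beta$ since $p-\beta>1$), this extension is classical in the thick-complement theory of Hardy inequalities (cf.\ the references discussed in the introduction, in particular \cite{Lewis1988, MR1010807}). Once that integrability is in hand, dominated convergence forces the error to vanish, and the desired inequality~\eqref{e.full} follows in the limit.
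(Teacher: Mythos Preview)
Your approach is quite different from the paper's: instead of building the estimate directly from a Whitney decomposition, Lemma~\ref{l.pointwise}, and the mapping properties of Riesz potentials, you try to bootstrap from Corollary~\ref{coro.hardy_ass} via a cut-off approximation.

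The crux of your argument is the claim that the $(p,\beta)$-Hardy inequality extends ``classically'' from $C_0^\infty(G)$ to $f\in C_0^\infty(\R^n)$ with $f|_{G^c}=0$. This is where the argument is incomplete. The references you cite (\cite{Lewis1988, MR1010807}) treat only the \emph{unweighted} case $\beta=0$, and even there the passage from $C_0^\infty(G)$ to the larger class is not a consequence of the Hardy inequality itself but of a separate density/removability statement. For $\beta<0$ the extension is a genuinely weighted assertion: you need $\int_G |f|^p\,\delta_{\partial G}^{\beta-p}\,dx<\infty$ whenever the right-hand side of~\eqref{e.full} is finite, and this cannot be obtained from Corollary~\ref{coro.hardy_ass} by the same cut-off approximation---that would be circular, since the error term in the $q=p$ approximation is again controlled by exactly this integral. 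The correct source is the weighted \emph{pointwise} Hardy machinery of \cite{KoskelaLehrback2009, LeLip}, which yields the $(p,\beta)$-Hardy inequality directly for Lipschitz $f$ vanishing on $G^c$; this is precisely what the paper packages into Lemma~\ref{l.pointwise}.

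So your route can be completed once that gap is filled, but it is not really more elementary: the substantive input (the pointwise Hardy estimate under the hypothesis $\ldima(G^c)>n-p+\beta$) is the same as in the paper's direct argument, which then finishes with a single Riesz-potential bound~\eqref{beta_ineq} rather than an approximation layered on top of Corollary~\ref{coro.hardy_ass}. The paper's proof has the advantage of being self-contained and making transparent where each hypothesis ($\beta\le 0$, $p<q<p^*$, the lower Assouad bound) actually enters.
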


The proof of Theorem~\ref{t.fractional_hardy_fat} is based upon a general scheme, 
built in~\cite{MR3148524}, in combination with 
`pointwise Hardy' techniques, developed in~\cite{MR2854110,KoskelaLehrback2009,LeLip}.
The latter approach yields the following lemma,
which is a modification of the results in~\cite{LeLip}.

By $\mathcal{W}(G)$ we denote a Whitney decomposition 
(as in 
\cite[Section VI.1]{MR0290095})
of a proper open set $G$ in $\R^n$.
That is, the union of these dyadic 
cubes (whose interiors are pairwise disjoint) is the whole of $G$ and, moreover,
\begin{equation}\label{dist_est}
d(Q)\le \mathrm{dist}(Q,\partial G)\le 4\, d(Q)
\end{equation}
whenever $Q\in \mathcal{W}(G)$.

\begin{lem}\label{l.pointwise}
Let $\beta \le 0$ and $1<p<q<np/(n-p)<\infty$, 
and write $L=10\sqrt n$. 
Suppose that $G$ is a proper open set in $\R^n$, $n\geq 2$, 
such that $\ldima(G^c) > n-p+\beta$;
if $G$ is unbounded, we assume that $G^c$ is unbounded, as well.
Then there exists an exponent $1<r_0<p$ as follows:
For every $r_0<r<p$ there is a constant $C>0$
such that
\begin{equation}\label{e.pointwise}
\lvert f_{Q}\rvert^q \lesssim
\ell(Q)^{q - \frac{\beta q}{p}} \bigg(\vint_{LQ} \lvert \nabla f(x)\rvert^r
\,\delta_{\partial G}(x)^{\beta r/p}\,dx\bigg)^{q/r}
\end{equation}
whenever $Q\in\mathcal{W}(G)$ and $f\in C^\infty_0(\R^n)$ is such that 
$f(x)=0$ for all $x\in G^c$.
\end{lem}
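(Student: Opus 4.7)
The idea is to reduce the averaged claim to a pointwise weighted Hardy estimate, which in turn rests on the fact that $G^c$ is uniformly fat in a suitable weighted sense. The overall structure is a modification of the arguments in \cite{LeLip}, with a weighted maximal function handled at the Whitney scale.

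\emph{Step 1 (Fatness and self-improvement).} Since $\beta\le 0$ and $p>1$, the hypothesis $\ldima(G^c)>n-p+\beta$ yields, via the equivalence with uniform $(p-\beta)$-fatness from~\cite[Remark~3.2]{KLV}, that $G^c$ is uniformly $(p-\beta)$-fat. Recasting this as a lower bound on the weighted $(p,\beta)$-capacity of boundary balls and applying the self-improvement of fatness in the distance-weighted setting (carried out in~\cite{KoskelaLehrback2009,LeLip}) produces an exponent $r_0\in (1,p)$ such that $G^c$ is uniformly $(r,\beta r/p)$-fat for every $r\in(r_0,p)$. This is the decisive input: it fixes $r_0$ and makes an integrability exponent strictly below $p$ available.

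\emph{Step 2 (Pointwise Hardy).} For any $r_0<r<p$, use a telescoping chain of Whitney-ball averages from $x\in G$ towards a nearby boundary point and a weighted Poincar\'e inequality on each ball in the chain (as in \cite{MR2854110,LeLip}). The endpoint average $|f_B|$ of the chain is absorbed by the weighted fatness of Step~1 and the fact that $f$ vanishes on the capacitarily large set $B\cap G^c$. The outcome is the pointwise estimate
\[
|f(x)|^r \lesssim \delta_{\partial G}(x)^{\,r-\beta r/p}\,\mathcal{M}_{C\delta_{\partial G}(x)}\bigl(|\nabla f|^r\,\delta_{\partial G}^{\beta r/p}\bigr)(x),
\]
valid for every $x\in G$, where $\mathcal{M}_R$ denotes the uncentered maximal operator restricted to balls of radius at most~$R$.

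\emph{Step 3 (Averaging over $Q$).} If $Q\in\W(G)$, the Whitney estimate \eqref{dist_est} gives $\delta_{\partial G}(x)\simeq\ell(Q)$ for every $x\in Q$, and the choice $L=10\sqrt n$ is exactly large enough that every ball $B(x,C\delta_{\partial G}(x))$ with $x\in Q$ lies inside $LQ$. The maximal function from Step~2 is therefore dominated by the corresponding integral average over $LQ$, whence
\[
|f(x)|\lesssim \ell(Q)^{\,1-\beta/p}\,\Bigl(\vint_{LQ} |\nabla f(y)|^r\,\delta_{\partial G}(y)^{\beta r/p}\,dy\Bigr)^{1/r}
\]
for every $x\in Q$. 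Since the right-hand side is independent of $x$, the trivial bound $|f_Q|\le \vint_Q |f|$ and raising to the $q$-th power yield~\eqref{e.pointwise}.

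The main obstacle is Step~1: a weighted self-improvement of the $(p-\beta)$-fatness in the presence of the singular weight $\delta_{\partial G}^\beta$. This is the technical heart of the argument and is essentially the content of~\cite{LeLip}; once the improved fatness is in hand, Steps~2 and~3 are routine applications of the pointwise Hardy machinery.
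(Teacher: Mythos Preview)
Your outline is in the right direction and correctly identifies~\cite{LeLip} as the crucial input, but it differs from the paper's route and carries a gap in Step~3. The paper argues more directly and never passes through a pointwise estimate or a maximal operator: from the strict inequality $\ldima(G^c)>n-p+\beta$ it first fixes $\lambda$ with $n-p+\beta<\lambda<\ldima(G^c)$, which gives $\ell(Q)^\lambda\lesssim\Ha^\lambda_\infty(G^c\cap LQ)$ for every Whitney cube $Q$ (this is precisely where the unboundedness assumption on $G^c$ is used). Then a single application of (a modification of) \cite[Lemma~3.1(a)]{LeLip} yields
\[
\Ha^\lambda_\infty(G^c\cap LQ)\,|f_Q|^r\lesssim \ell(Q)^{\,r-\beta r/p-n+\lambda}\int_{LQ}|\nabla f|^r\,\delta_{\partial G}^{\beta r/p}\,dx
\]
for all $r_0<r<p$, with $r_0$ determined by $\lambda$. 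Combining the two displays and raising to the power $q/r$ already gives~\eqref{e.pointwise}. In particular, no self-improvement of fatness is needed in Step~1: the room for an exponent $r<p$ comes entirely from the \emph{strict} inequality in the dimension hypothesis, so your invocation of a weighted self-improvement is correct in spirit but heavier than necessary.

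The substantive issue is Step~3. The restricted maximal function $\mathcal{M}_{C\delta_{\partial G}(x)}$ you produce in Step~2 is a supremum over balls of \emph{all} radii up to $C\delta_{\partial G}(x)$, and such a supremum is not in general dominated by the single average $\vint_{LQ}$; containment $B\subset LQ$ alone gives only $\vint_B g\le \tfrac{|LQ|}{|B|}\vint_{LQ}g$, which blows up for small $B$. Since your chain in Step~2 descends to arbitrarily small scales near the boundary, the bound you assert does not follow. The natural repair is to bypass the pointwise estimate altogether and bound $|f_Q|$ directly: control $|f_Q-f_{LQ}|$ by one Poincar\'e inequality on $LQ$, and bound $|f_{LQ}|$ via the Hausdorff-content (or fatness) estimate for $G^c\cap LQ$. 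That is exactly the paper's argument, and it avoids the maximal operator entirely.
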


\begin{proof}
The assumption $\ldima(G^c) > n-p+\beta$ implies that
there exists a positive number $\lambda > n-p+\beta$ such that
$\ell(Q)^{\lambda}\lesssim\Ha_\infty^\lambda(G^c\cap LQ)$
for all $Q\in\mathcal{W}(G)$; see e.g.~\cite[Remark~2.3]{KLV}
and notice that here we need to know the unboundedness
of $G^c$ if $G$ is unbounded.
On the other hand, by a simple modification of~\cite[Lemma~3.1(a)]{LeLip}
there exists $1<r_0<p$ such that
we have for all $r_0<r<p$ that
\begin{equation}\label{e.basic fat}
\Ha_\infty^\lambda(G^c\cap LQ)|f_Q|^r
\lesssim \ell(Q)^{r - \frac{\beta r}{p} - n + \lambda} \int_{LQ} 
\lvert \nabla f(x)\rvert^r\,\delta_{\partial G}(x)^{\beta r/p}\,dx,
\end{equation}
and actually  
the proof of~\cite[Lemma~3.1(a)]{LeLip} shows 
that for~\eqref{e.basic fat} it is enough to assume that
$f(x)=0$ for all $x\in G^c$.
Combining~\eqref{e.basic fat} with the above estimate 
$\ell(Q)^{\lambda}\lesssim\Ha_\infty^\lambda(G^c\cap LQ)$,
and taking everything to power $q/r$ yields the desired 
estimate~\eqref{e.pointwise}.
\end{proof}

\begin{proof}[Proof of Theorem \ref{t.fractional_hardy_fat}]
Write $L=10\sqrt n$, and let $1< r_0<p$ be 
as in Lemma~\ref{l.pointwise}. 
Fix a function $f\in C^\infty_0(\R^n)$ such that $f(x)=0$ for all $x\in G^c$.
By the covering property of Whitney cubes and inequality \eqref{dist_est}, \begin{equation}\label{e.split}
\int_{G}
\vert f(x)\vert^q \, \delta_{\partial G}(x)^{\frac{q}{p}(n-p+\beta)-n}\,dx
\lesssim\sum _{Q\in \mathcal{W}(G)}
\ell (Q)^{\frac{q}{p}(n-p+\beta)}
\bigg\{\vint_{Q} |f(x)-f_{Q}|^q\,dx +
|f_{Q}|^q\bigg\}\,.
\end{equation}
We  choose $r_0<r<p$ 
such that  $1/r-1/q<1/n$. Then 
the integral on the right-hand side of~\eqref{e.split}
can be estimated using a
$(q,r)$-Poincar\'e inequality for cubes:
\[\begin{split}
\vint_{Q} \lvert f(x)-f_{Q}\rvert^q\,dx & \lesssim \ell(Q)^{q} \bigg(\vint_{Q} \lvert \nabla f(x)\rvert^r
\,dx\bigg)^{q/r}\,\\
& \lesssim
\ell(Q)^{q - \frac{\beta q}{p}} \bigg(\vint_{LQ} \lvert \nabla f(x)\rvert^r
\,\delta_{\partial G}(x)^{\beta r/p}\,dx\bigg)^{q/r}.
\end{split}\]
On the other hand, a corresponding estimate for the second 
integral on the right-hand side of~\eqref{e.split}
follows from Lemma~\ref{l.pointwise} since $r_0<r<p$.
That is,
\[
\lvert f_{Q}\rvert^q \lesssim
\ell(Q)^{q - \frac{\beta q}{p}} \bigg(\vint_{LQ} \lvert \nabla f(x)\rvert^r
\,\delta_{\partial G}(x)^{\beta r/p}\,dx\bigg)^{q/r}.
\] 
Insertion of these two estimates to~\eqref{e.split} yields that
\begin{align}\label{upper_bound_beta}
\int_{G}
\vert f(x)\vert^q \,& \delta_{\partial G}(x)^{\frac{q}{p}(n-p+\beta)-n}\,dx\nonumber \\
&\lesssim
\sum _{Q\in \mathcal{W}(G)} \ell (Q)^{\frac{q}{p}(n-p+\beta)}
\ell(Q)^{q - \frac{\beta q}{p}} \bigg(\vint_{LQ} \lvert \nabla f(x)\rvert^r
\,\delta_{\partial G}(x)^{\beta r/p}\,dx\bigg)^{q/r} \nonumber\\
&\lesssim
\sum _{Q\in \mathcal{W}(G)}
|Q|^{q/p} \bigg(  \vint_{LQ}  
\lvert \nabla f(x)\rvert^r\,\delta_{\partial G}(x)^{\beta r/p} \,dx\bigg)^{q/r}\,.
\end{align}

Since $1<r<p<q<\infty$, we have 
for every $g\in L^{p/r}(\R^n)$ that 
\begin{equation}\label{beta_ineq}
\begin{split}
\sum_{Q\in\mathcal{W} (G)} |Q|^{q/p}\bigg(\vint_{LQ} \lvert g(x)\rvert \,dx\bigg)^{q/r}
&\lesssim \bigg(\int_{\R^n}  |g(x)|^{p/r}\,dx\bigg)^{q/p}\,.
\end{split}
\end{equation} 
Indeed, to obtain inequality~\eqref{beta_ineq},  one first dominates
the left-hand side of \eqref{beta_ineq} by (a constant multiple of)
\[
\sum_{Q\in\mathcal{W}(G)} \int_{\R^n}\chi_Q(x) \big(I_\sigma \lvert g\rvert(x)\big)^{q/r}\,dx
\]
and then applies the pairwise disjointedness of the interiors of Whitney cubes and 
the boundedness 
of the Riesz potential
$I_\sigma \colon  h\mapsto \lvert x\rvert^{\sigma-n}*h$, where $\sigma = nr(q/p-1)/q\in (0,n)$,
from $L^{p/r}(\R^n)$ to $L^{q/r}(\R^n)$; we refer to \cite[Theorem~1]{MR0312232}. 

Now estimates~\eqref{upper_bound_beta}
and~\eqref{beta_ineq} (the latter with $g=|\nabla f|^r\delta_{\partial G}^{\beta r/p}$ ; 
if $g\notin L^{p/r}(\R^n)$ the claim is trivial)
show that, indeed 
\begin{align*}
\int_{G}
\vert f(x)\vert^q \, \delta_{\partial G}(x)^{\frac{q}{p}(n-p+\beta)-n}\,dx
&\lesssim
\sum _{Q\in \mathcal{W}(G)}
|Q|^{q/p} \bigg(  \vint_{LQ}  
\lvert \nabla f(x)\rvert^r\,\delta_{\partial G}(x)^{\beta r/p} \,dx\bigg)^{q/r}\\
&\lesssim
\bigg(\int_{\R^n} 
\lvert \nabla f(x)\rvert^p\,\delta_{\partial G}(x)^{\beta} \,dx\bigg)^{q/p}\,,
\end{align*}
and inequality~\eqref{e.full} follows.
\end{proof}

\section{Dimensional dichotomy for the complement}\label{s.nec}

In this final section, we establish dimensional dichotomy results for 
the complements of open sets which admit Hardy--Sobolev inequalities.
In particular, the global dichotomy result, stated in Theorem~\ref{t.dichotomy}, 
is proved at the end of this section.
Before that, we study local versions of these results 
in the following Propositions~\ref{p.dichotomy} and~\ref{p.dichotomybeta<0}
corresponding to the cases $\beta\ge 0$ and
$\beta<0$, respectively.
Similar results for the
unweighted $p$-Hardy inequality were proven in~\cite{MR1948106}, and 
for the weighted $(p,\beta)$-Hardy inequality in~\cite{MR2442898}.
Recall that in the `Hardy' case $q=p$, both of the dimensional bounds
for the complement $G^c$ are strict, and that we do not know if this is
true 
for the lower bounds
also when $q>p$ (see the discussion after 
Example~\ref{ex.basic}).

\begin{prop}\label{p.dichotomy}
 Let $G\subset\R^n$ be an open set and
 assume that $1\le p\le q<np/(n-p)<\infty$ and $\beta \ge 0$ 
 are such that $q(n-p+\beta)/p\not=n$ and $G$ admits a $(q,p,\beta)$-Hardy--Sobolev 
 inequality~\eqref{e.hardy-sobo}.
 Then for each (closed) ball $B=\overline{B}(x,R)\subset\R^n$ either
 \[
 \udima(G^c\cap B)< \frac q p (n-p+\beta)\quad \text{or}\quad 
\dimh(G^c\cap 2B)\ge n-p+\beta\,.
\]
\end{prop}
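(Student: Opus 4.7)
The strategy will be contrapositive: assuming the Hardy--Sobolev inequality holds in $G$ and that the second alternative fails, so that $\mathcal{H}^{n-p+\beta}_\infty(G^c\cap 2B)=0$, I plan to show that $s:=\tfrac{q}{p}(n-p+\beta)$ lies in the Aikawa class $\mathcal{A}(G^c\cap B)$; Lemma~\ref{l.aikawa}(A,C) together with its self-improvement will then deliver $\udima(G^c\cap B)<s$. The range $s>n$ is automatic since $\udima\le n$, and $s=n$ is excluded by hypothesis, so I may restrict to the non-trivial case $0<n-p+\beta\le s<n$, in which $\lvert G^c\cap 2B\rvert=0$.

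Fix $x\in G^c\cap B$ and $0<r<R/10$. For any $\epsilon,\delta>0$, the compactness of $G^c\cap 2B$ combined with the Hausdorff-content bound produces a finite cover $\{B(z_j,\rho_j)\}$ with $z_j\in G^c\cap 2B$, $\rho_j\le\delta$, and $\sum_j\rho_j^{n-p+\beta}\le\epsilon$. I will test the Hardy--Sobolev inequality against $f=\varphi\psi$, where $\varphi$ is a standard bump that equals $1$ on $B(x,r)$ and is supported in $B(x,2r)$ with $\lvert\nabla\varphi\rvert\lesssim 1/r$, and $\psi$ is a Lipschitz cutoff that vanishes on $\bigcup_j B(z_j,\rho_j)$, equals $1$ outside $\bigcup_j B(z_j,2\rho_j)$, and satisfies $\lvert\nabla\psi\rvert\lesssim 1/\rho_j$ on each annulus $B(z_j,2\rho_j)\setminus B(z_j,\rho_j)$; a standard mollification places $f$ in $C_0^\infty(G)$.

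Because each of the centres $x$ and $z_j$ lies in $G^c$, one has $\delta_{\partial G}\le 2r$ on $B(x,2r)$ and $\delta_{\partial G}\le 2\rho_j$ on $B(z_j,2\rho_j)$. Routine calculations then give
\[
\int_{\R^n}\lvert\nabla f\rvert^p\,\delta_{\partial G}^\beta\,dy\lesssim r^{n-p+\beta}+\sum_j\rho_j^{n-p+\beta}\lesssim r^{n-p+\beta}+\epsilon,
\]
and since $f\equiv 1$ on $A_\epsilon:=B(x,r)\setminus\bigcup_j B(z_j,2\rho_j)$, the Hardy--Sobolev inequality yields $\int_{A_\epsilon}\delta_{\partial G}^{s-n}\,dy\lesssim(r^{n-p+\beta}+\epsilon)^{q/p}$. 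Sending $\delta,\epsilon\to 0$, every $y\in G\cap B(x,r)$ enters $A_\epsilon$ as soon as $\delta<\delta_{\partial G}(y)/2$, so $\chi_{A_\epsilon}\to\chi_{G\cap B(x,r)}$ pointwise on a set of full measure in $B(x,r)$, and Fatou's lemma upgrades the estimate to
\[
\int_{B(x,r)}\delta_{\partial G}^{s-n}\,dy\lesssim r^{s}.
\]

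Since $\dist(y,G^c\cap B)\ge\delta_{\partial G}(y)$ and $s-n<0$, this becomes $\int_{B(x,r)}\dist(y,G^c\cap B)^{s-n}\,dy\lesssim r^{s}$ for all $x\in G^c\cap B$ and $0<r<R/10$. The compactness of $G^c\cap B$ together with Lemma~\ref{l.aikawa}(E) then extends this to all radii up to $\diam(G^c\cap B)$, so $s\in\mathcal{A}(G^c\cap B)$. I expect the principal technical hurdle to be the Fatou passage itself: the vanishing set $\bigcup_j B(z_j,2\rho_j)$ depends delicately on $(\epsilon,\delta)$, and one must simultaneously control the wildly varying radii $\rho_j$ to verify that the localised gradient bound for $\psi$ assembles into the clean contribution $\sum_j\rho_j^{n-p+\beta}$ on the right-hand side; the accompanying mollification of $f$ is standard but must be arranged so as not to spoil these bounds.
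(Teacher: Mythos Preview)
Your proposal is correct and follows essentially the same approach as the paper: use the vanishing Hausdorff content to build a cut-off $\psi$, test with a bump times this cut-off, and pass to the limit via Fatou to obtain the Aikawa estimate, then apply Lemma~\ref{l.aikawa}(C,E). The only organisational difference is that the paper first proves once and for all that the $(q,p,\beta)$-Hardy--Sobolev inequality extends to every $f\in C_0^\infty(G\cup B(x_0,2R))$ and \emph{then} tests with the bare bump $\varphi((\,\cdot\,-w)/r)$, whereas you fuse these two steps by testing directly with $\varphi\psi$; both routes are equivalent and neither is materially shorter.
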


\begin{proof} 
The proof is based on the approach in~\cite{MR1948106}, and
uses also ideas from the proofs of Lemmas~6.4 and~6.6 in~\cite{lehrbackHardyAssouad}.
Fix $x_0\in\R^n$ and $R>0$, and write $B_0=\overline{B}(x_0,R)\subset\R^n$. 
It suffices to show that if $\dimh(G^c\cap 2B_0) < n-p + \beta$, then 
$\udima(G^c\cap B_0 )< (q/p)(n-p+ \beta)$, and without
loss of generality we may also assume that $0<q(n-p+\beta)/p<n$.

If, indeed, $\dimh(G^c\cap 2B_0) < n-p + \beta$, then
$\Ha^{n-p+\beta}_\infty(G^c\cap 2B_0) = 0$. 
Thus, assuming that $f\in  C_0^\infty(G\cup B(x_0,2R))$ is fixed 
(and $f\not \equiv 0$),
we can find 
balls $B_i^j=B(w_{ij},r_{ij})$ with $w_{ij}\in G^c\cap 2B_0$, $i=1,\dots,N_j$, so that
$G^c \cap 2B_0 \subset\bigcup_{i=1}^{N_j} B_i^j$, 
and 
\[
\sum_{i=1}^{N_j} r_{ij}^{n-p+\beta} \leq \|f\|_\infty^{-p}\, 2^{-j}
\] 
for all $j\in\N$.
Define cut-off functions $\psi_j(y)=\min_i\{1,r_{ij}^{-1}\delta_{2B_i^j}(y)\}$
and set $f_j = \psi_j f$. 
Then 
$f_j$ is clearly a Lipschitz function with a compact support in $G$
and 
\[|\nabla f_j|^p \lesssim \sum_i r_{ij}^{-p} \chi_{3B_i^j}|f|^p+|\nabla f|^p\]
almost everywhere. 
Moreover, we have that 
$\lim_{j\to\infty} f_j= f$ in $G$. 

Since
$1\le p,q<\infty$ and 
 $\delta_{\partial G}$ is bounded and away from zero
in the support of $f_j$, the $(q,p,\beta)$-Hardy--Sobolev inequality holds 
also for the
function $f_j$ by the standard approximation, 
and this with the choice of the balls $B_i^j$  implies that
\[\begin{split}
\biggl(\int_{G} |f_j|^q \delta_{\partial G}^{(q/p)(n - p + \beta) - n}\,dx\biggr)^{p/q} &
     \le C  \int_G |\nabla f_j|^p\delta_{\partial G}^\beta\,dx\\
  &  \le  C \biggl\{ 
      \|f\|^p_\infty \sum_{i=1}^{N_j}  |B_i^j|   r_{ij}^{-p+\beta} + 
               \int_G |\nabla f|^p\delta_{\partial G}^\beta\,dx
                \biggr\} \\
  &  \le  C 2^{-j} + C \int_{G} |\nabla f|^p\delta_{\partial G}^\beta\,dx\,.
  \end{split}
\]
By Fatou's lemma
this argument shows that the $(q,p,\beta)$-Hardy--Sobolev 
inequality~\eqref{e.hardy-sobo}
holds for all functions
$f\in C_0^\infty(G\cup B(x_0,2R))$ with a constant 
$C=C(C_1,q,p,\beta,n)>0$,
where $C_1>0$ is the constant for which the $(q,p,\beta)$-Hardy--Sobolev
inequality was assumed 
to hold for all $f\in C_0^\infty(G)$.

Let then  $w\in G^c\cap B_0$ and $0<r<R/2$.
By the above reasoning, 
we can now use the $(q,p,\beta)$-Hardy--Sobolev
inequality for the function $f(y)=\varphi((y-w)/r)$, where
$\varphi$ is as in the proof of Theorem~\ref{t.res}; indeed,
$f\in C_0^\infty(G\cup B(x_0,2R))$. 
Since 
$\lvert G^c \cap 2B_0\rvert =0$, a calculation 
similar to the one given in the 
proof of Theorem~\ref{t.res} shows that 
\begin{equation}\label{e.test u}
\begin{split}
 \int_{B(w,r)} \delta_{G^c}(x)^{(q/p)(n-p+\beta)-n}\,dx= \int_{B(w,r)} 
 \delta_{\partial G}(x)^{(q/p)(n-p+\beta)-n}\,dx 
\le C r^{(q/p)(n-p+\beta)}.
\end{split}
\end{equation}
Here the constant $C$ is independent of both $w$ and $r$.
Using Lemma~\ref{l.aikawa}(E) and
the fact that
$\delta_{G^c}\le \delta_{G^c\cap B_0}$, we infer
from~\eqref{e.test u} that $(q/p)(n-p+\beta)\in\mathcal{A}(G^c\cap B_0)$,
and thus Lemma~\ref{l.aikawa}(C) yields the claim $\udima(G^c\cap B_0) < (q/p)(n-p+\beta)$. 
\end{proof}

For $\beta<0$, 
the case $q>p$ involves additional difficulties 
compared to the case $q=p$ that was considered 
in~\cite{MR2442898}.

\begin{prop}\label{p.dichotomybeta<0}
Let $G\subset\R^n$ be an open set. Assume that
 $1\le p< q<np/(n-p)<\infty$  and $\beta<0$
are such that  
 $G$ admits a $(q,p,\beta)$-Hardy--Sobolev 
 inequality~\eqref{e.hardy-sobo}.
Then, by writing $\lambda = 8\sqrt n$, we have either
\[
\udima(G^c\cap B)< \frac q p(n-p+\beta)\quad \text{or} \quad
\ldimm(G^c\cap \lambda B)\ge n-p+\beta
\]
whenever  $B=\overline{B}(x,R)\subset\R^n$  is a ball
such that $\delta_{G^c}^\beta \in L^1(\lambda B)$ and 
 $G^c\cap \lambda B$ is porous. 
\end{prop}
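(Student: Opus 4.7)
The proof will combine the reverse Hölder self-improvement strategy of Theorem~\ref{t.res.beta<0} with the localization argument of Proposition~\ref{p.dichotomy}. Assuming $\ldimm(G^c\cap \lambda B)<n-p+\beta$, I will aim to show $\udima(G^c\cap B)<\frac{q}{p}(n-p+\beta)$.

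The first step is to extend the $(q,p,\beta)$-Hardy--Sobolev inequality from $C_0^\infty(G)$ to all $f\in C_0^\infty((\lambda/2)B)$ (which need not vanish on $G^c\cap\lambda B$). The assumption on $\ldimm$ provides a sequence $s_j\downarrow 0$ with $\epsilon_j:=N(G^c\cap\lambda B,s_j)s_j^{n-p+\beta}\to 0$. Let $\psi_j$ be a Lipschitz cutoff that vanishes on $(G^c\cap\lambda B)_{s_j}$, equals $1$ outside $(G^c\cap\lambda B)_{2s_j}$, and has $|\nabla\psi_j|\lesssim 1/s_j$. The key geometric point, enabled by $\lambda=8\sqrt n$, is that for $x\in(\lambda/2)B$ and $s_j$ small, the nearest point of $G^c$ to $x$ lies in $G^c\cap\lambda B$, so $\delta_{G^c}(x)=\delta_{G^c\cap\lambda B}(x)\simeq s_j$ on the annular support of $\nabla\psi_j$ intersected with $(\lambda/2)B$. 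Given $f\in C_0^\infty((\lambda/2)B)$, the product $f\psi_j$ mollifies to functions in $C_0^\infty(G)$ (using $\delta_{G^c}^\beta\in L^1(\lambda B)$), and the cutoff error is controlled by
\[
\int |f\,\nabla\psi_j|^p \delta_{G^c}^\beta \,dx \lesssim \|f\|_\infty^p\, s_j^{-p}\cdot s_j^\beta\cdot N(G^c\cap\lambda B,s_j)\,s_j^n = \epsilon_j\|f\|_\infty^p \to 0.
\]
Fatou's lemma and dominated convergence then yield the extended Hardy--Sobolev inequality for all $f\in C_0^\infty((\lambda/2)B)$.

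Next, testing the extended inequality with the standard cutoff $\eta_Q$ of a cube $Q$ with $4Q\subset(\lambda/2)B$ and employing the factorization $\delta_{G^c}^{(q/p)(n-p+\beta)-n}=\delta_{G^c}^{(q/p)\beta}\cdot\delta_{G^c}^{(q/p)(n-p)-n}$, together with $\sup_Q\delta_{G^c}\lesssim\ell(Q)$ for cubes near $G^c$ (and triviality otherwise) and the non-positivity of $(q/p)(n-p)-n$, yields the reverse Hölder inequality
\[
\biggl(\fint_Q \delta_{G^c}^{(q/p)\beta}\,dx\biggr)^{p/q} \le C\fint_{2Q}\delta_{G^c}^\beta\,dx.
\]
Applying Proposition~\ref{p.improvement} with $f=\delta_{G^c}^\beta$, exponents $q/p>1$ and $s=1$, self-improves this to $(\fint_Q\delta_{G^c}^{(q/p)\beta})^{p/q}\le C(\fint_{\sigma Q}\delta_{G^c}^{\epsilon\beta})^{1/\epsilon}$ for any $\epsilon>0$. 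Since $G^c\cap\lambda B$ is porous, Lemma~\ref{l.aikawa}(D) gives $\udima(G^c\cap\lambda B)<n$, and I fix $\epsilon>0$ with $\udima(G^c\cap\lambda B)<n+\epsilon\beta$. Then $n+\epsilon\beta\in\mathcal{A}(G^c\cap\lambda B)$ by Lemma~\ref{l.aikawa}(B,E), so $\fint_{\sigma Q}\delta_{G^c}^{\epsilon\beta}\lesssim\ell(Q)^{\epsilon\beta}$ for cubes near $G^c$, and plugging back in produces $\int_Q\delta_{G^c}^{(q/p)\beta}\lesssim\ell(Q)^{(q/p)\beta+n}$. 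This gives $(q/p)\beta+n\in\mathcal{A}(G^c\cap B)$, and since $\beta+n>(q/p)\beta+n$ for $\beta<0$ and $q>p$, Lemma~\ref{l.aikawa}(B) upgrades this to $\beta+n\in\mathcal{A}(G^c\cap B)$.

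Finally, having $\int_{B(w,2r)}\delta_{G^c}^\beta\,dx\lesssim r^{n+\beta}$ for $w\in G^c\cap B$ and small $r$, I repeat the test-function argument of Proposition~\ref{p.dichotomy}: apply the extended Hardy--Sobolev inequality to $f(y)=\varphi((y-w)/r)$ with $\varphi$ a standard bump to obtain
\[
\int_{B(w,r)}\delta_{G^c}^{(q/p)(n-p+\beta)-n}\,dx \lesssim r^{-q}\bigl(r^{n+\beta}\bigr)^{q/p}=r^{(q/p)(n-p+\beta)},
\]
so $(q/p)(n-p+\beta)\in\mathcal{A}(G^c\cap B)$, and the self-improvement in Lemma~\ref{l.aikawa}(C) delivers $\udima(G^c\cap B)<(q/p)(n-p+\beta)$. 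The principal obstacle is the extension step: the bookkeeping of the cutoff error term requires both the geometric observation that ensures $\delta_{G^c}\simeq s_j$ on the relevant annulus (the role of $\lambda=8\sqrt n$) and the Minkowski-sequence bound $Ns_j^{n-p+\beta}\to 0$ to make the error disappear; after that, the reverse Hölder and Aikawa bootstrap directly mirror Theorem~\ref{t.res.beta<0}.
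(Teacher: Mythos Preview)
Your proposal is correct and follows essentially the same approach as the paper: extend the Hardy--Sobolev inequality beyond $C_0^\infty(G)$ via cutoffs built on a Minkowski-scale cover of $G^c\cap\lambda B$, derive the reverse H\"older inequality~\eqref{e.weak} for $\delta_{G^c}^\beta$, self-improve it through Proposition~\ref{p.improvement} down to an exponent $\varepsilon$ with $\udima(G^c\cap\lambda B)<n+\varepsilon\beta$, and close with the Aikawa condition and Lemma~\ref{l.aikawa}(C). The only differences are organizational: the paper extends to $C_0^\infty(G\cup B(x_0,6\sqrt n R))$ rather than $C_0^\infty((\lambda/2)B)$, encodes your geometric observation as the factor $\chi_{\{r_j\le \delta_{G^c}\}}$ in the gradient estimate~\eqref{e.newes}, and combines your two final steps (first $(q/p)\beta+n\in\mathcal A$, then the test-function bound) into a single chained inequality.
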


\begin{proof}
The proof follows the lines of the proof of Proposition~\ref{p.dichotomy},
but many of the details are different  and 
hence we include here a complete proof.
Fix a ball  $B_0=\overline{B}(x_0,R)$ such that $\delta_{G^c}^\beta \in L^1(\lambda B_0)$ and
$\udima(G^c\cap \lambda B_0)<n$, i.e.\ 
 $G^c\cap \lambda B_0$ is porous; recall Lemma \ref{l.aikawa}(D). 
It suffices to show  
 $\udima(G^c\cap B_0)< (q/p)(n-p+ \beta)$ while assuming that $\ldimm(G^c\cap \lambda B_0)< n-p+\beta$. 
 Without loss of generality, we may also assume that 
$0<q(n-p+\beta)/p<n$. 

Fix  $f\in C^\infty_0(G\cup B(x_0, 6\sqrt nR))$ 
 such that  $f\not\equiv 0$.
Since 
$\ldimm(G^c\cap \lambda B_0)< n-p+\beta$,
there is a sequence $(r_j)_{j\in\N}$ of positive numbers, converging to zero and satisfying the following
two conditions for each $j\in\N$: (i) $r_j\le 2\sqrt nR$ and $r_j\le \delta_{G^c}(y)$ whenever 
$y\in G\cap (B(x_0,6\sqrt nR))^c$
is such that $f(y)\not=0$, and (ii)
there are balls $B_i^j=B(w_{ij},r_j)$ with $w_{ij}\in G^c\cap \lambda B_0$, $i=1,\dots,N_j$, so that
$G^c \cap \lambda B_0 \subset\bigcup_{i=1}^{N_j} B_i^j$,
and 
\[
N_jr_j^{n-p+\beta} \le  2^{-j}\,\lVert f\rVert_\infty^{-p}\,.
\]
Define cut-off functions $\psi_j(y)=\min_i\{1, r_j^{-1} \delta_{2B_i^j}(y)\}$
and let $f_j=\psi_j f$ for each $j\in \N$.
Then $f_j$ is a Lipschitz function that is compactly supported in $G$.
Moreover, a careful inspection shows that
\begin{equation}\label{e.newes}
\lvert \nabla f_j\rvert^p \lesssim \sum_i r_j^{-p} \chi_{3B_i^j}\lvert f\rvert^p 
 \chi_{\{r_j\le \delta_{G^c}\}} + \lvert \nabla f\rvert^p
\end{equation}
almost everywhere, 
and $\lim_{j\to\infty} f_j= f$ in $G$. 
Let us remark that 
compared to the proof of Proposition~\ref{p.dichotomy},
the new factor $\chi_{\{r_j\le \delta_{G^c}\}}$ appears. 
 This will be needed in the subsequent
arguments due to the
assumption that $\beta<0$, 
and this is the reason why 
the upper bound is now given in terms of 
the lower Minkowski dimension instead of the Hausdorff dimension.

Using approximation and  Fatou's lemma, and 
applying the assumed
$(q,p,\beta)$-Hardy--Sobolev inequality and inequality~\eqref{e.newes}, we obtain
\[\begin{split}
\biggl(\int_{G} |f|^q \delta_{\partial G}^{(q/p)(n - p + \beta) - n}\,dx\biggr)^{p/q} 
&\le \liminf_{j\to\infty} 
\biggl(\int_{G} |f_j|^q \delta_{\partial G}^{(q/p)(n - p + \beta) - n}\,dx\biggr)^{p/q} 
  \\& \le \lim_{j\to\infty} C\bigg(   2^{-j} +  \int_{G} |\nabla f|^p\delta_{\partial G}^\beta\,dx\bigg)
= C \int_{G} |\nabla f|^p\delta_{\partial G}^\beta\,dx\,.
  \end{split}
\]
Hence the $(q,p,\beta)$-Hardy--Sobolev inequality~\eqref{e.hardy-sobo}
holds in fact  for all functions
$f\in C_0^\infty(G\cup B(x_0, 6\sqrt nR))$  with a constant $C=C(C_1,q,p,\beta,n)>0$,
where $C_1>0$ is the constant  for which the $(q,p,\beta)$-Hardy--Sobolev
inequality holds for all $f\in C_0^\infty(G)$.

Arguing as in the proof of Theorem~\ref{t.res.beta<0}
and using the fact that $\lvert G^c \cap \lambda B_0\rvert=0$, we obtain
a constant $\kappa>0$ such that
\begin{equation}\label{e.weak}
\bigg(\vint_Q\delta_{G^c}(y)^{(q/p)\beta}\,dy\bigg)^{p/q} \le \kappa \vint_{2Q} 
\delta_{G^c}(y)^{\beta}\,dy
\end{equation}
for all cubes $Q$ such that $2Q\subset B(x_0, 6\sqrt nR)$. 
However, we will actually need
an improved version of~\eqref{e.weak},
as in the proof of Theorem~\ref{t.res.beta<0}.
Let  $Q_0$ be an open cube centered at $x_0$ and with side length $4R$. Then 
$2Q_0\subset B(x_0,6\sqrt nR)$.
Choose $\varepsilon>0$ for which $\udima(G^c\cap \lambda B_0)<\varepsilon\beta+n.$ 
Let us observe that
$1<q/p$ and $\delta_{G^c}^\beta\in L^{q/p}(Q_0)$ by inequality~\eqref{e.weak} and the assumptions. Hence, 
by Proposition~\ref{p.improvement} there exists a constant $C>0$ such that
the left hand side of \eqref{e.weak} is dominated 
by $C\bigl(\vint_{2Q} \delta_{G^c}^{\varepsilon \beta} \bigr)^{1/\varepsilon}$ 
if $2Q\subset Q_0$.

With the help of 
the $(q,p,\beta)$-Hardy--Sobolev inequality for
$C_0^\infty(G\cup B(x_0,6\sqrt nR))$ and the improved version
of inequality~\eqref{e.weak}, we can now 
proceed as follows.
Fix a cube $Q$ that
is centered at $G^c\cap B_0$ and whose
side length is bounded by $\tfrac{2}{5} R$; then
$4Q\subset Q_0$ and $\delta_{G^c}(y)=\delta_{G^c\cap \lambda B_0}(y)$ whenever $y\in 4Q$. Hence,
\begin{align*}
\bigg(\int_Q \delta_{G^c}(y)^{(q/p)(n-p+\beta)-n}\,dy\bigg)^{p/q}
&\le C\ell(Q)^{n-p} \vint_{2Q} \delta_{G^c}(y)^{\beta}\,dy\\&
\le \ell(Q)^{n-p}\bigg(\vint_{2Q}\delta_{G^c}(y)^{(q/p)\beta}\,dy\bigg)^{p/q} 
\\&\le C \ell(Q)^{n-p}\bigg(\vint_{4Q} \delta_{G^c}(y)^{\varepsilon\beta}\,dy\bigg)^{1/\varepsilon}\\
&= C \ell(Q)^{n-p}\bigg(\vint_{4Q} \delta_{G^c\cap \lambda B_0}(y)^{\varepsilon\beta}\,dy\bigg)^{1/\varepsilon}
\le C\ell(Q)^{n-p+\beta}\,.
\end{align*}
Since $\delta_{G^c}\le \delta_{G^c \cap B_0}$ we can again use
Lemma~\ref{l.aikawa}(E,C) to conclude that 
\[\udima(G^c\cap B_0)<\frac q p(n-p+\beta)\,.\qedhere \]
\end{proof}

The global dichotomy results of Theorem~\ref{t.dichotomy} 
can now be proved using similar arguments as in the
local results  of Propositions~\ref{p.dichotomy}
and~\ref{p.dichotomybeta<0}. We outline the main ideas:

\begin{proof}[Proof of Theorem~\ref{t.dichotomy}]
Let us first consider the case $\beta \ge 0$. 
It suffices to prove that inequality
\begin{equation}\label{e.target}
\udima(G^c) < \frac q p (n-p+\beta)
\end{equation}
holds if $\dimh(G^c)<n-p+\beta$ and $q(n-p+\beta)/p<n$.
Fix $\omega \in G^c$ and $0<r<\diam(G^c)$, and write $B_0=\overline{B}(\omega,3r)$. Since
$\dimh(G^c\cap 2B_0)\le \dimh(G^c)<n-p+\beta$, we can repeat
the proof of Proposition \ref{p.dichotomy}
to obtain that 
\[
\int_{B(\omega,r)} \delta_{G^c}(x)^{(q/p)(n-p+\beta)-n}\,dx \le C r^{(q/p)(n-p+\beta)}\,,
\]
where the constant $C$ is independent of both $\omega$ and $r$. Thus, 
$(q/p)(n-p+\beta)\in \mathcal{A}(G^c)$ and inequality~\eqref{e.target} follows
from Lemma~\ref{l.aikawa}(C).

In the case $\beta < 0$,
the claim for $q=p$ follows
from the results in~\cite{MR2442898}, and hence
it suffices to prove inequality~\eqref{e.target}
while assuming that $p<q$ and $\ldimm(G^c)<n-p+\beta$.
In particular, then $\lvert G^c\rvert = 0$. Arguing as in the proof of 
Proposition~\ref{p.dichotomybeta<0},
we find that 
the $(q,p,\beta)$-Hardy--Sobolev inequality  \eqref{e.assumption} 
 actually holds for all $f\in C^\infty_0(\R^n)$.
In particular, the
assumptions of Theorem~\ref{t.res.beta<0} 
are valid with $E=G^c$, and so inequality~\eqref{e.target} follows.
\end{proof}

\bibliographystyle{abbrv}

 \def\cprime{$'$}

\end{document}